\numberwithin{equation}{section}
\newtheorem{thm}{Theorem}[section]
\newtheorem{prop}[thm]{Proposition}
\newtheorem{lem}[thm]{Lemma}
\newtheorem{rem}[thm]{Remark}
\theoremstyle{definition}
\newcommand{\bra}[1]{\langle #1 |}        
\newcommand{\ket}[1]{{| #1 \rangle}}
\newcommand{\C}{{\mathbb C}}
\newcommand{\Z}{{\mathbb Z}}
\newcommand{\al}{\alpha}
\newcommand{\GS}{\mathfrak{S}}
\newcommand{\bs}{\boldsymbol}
\newcommand{\gl}{\mathfrak{gl}}
\newcommand{\sln}{\mathfrak{sl}}   
\newcommand{\la}{\lambda}
\newcommand{\res}{{\rm res}}
\newcommand{\Res}{\mathop{\rm res}}
\newcommand{\Sym}{\mathrm{Sym}}
\newcommand{\E}{\mathcal{E}}
\newcommand{\on}{\operatorname}
\newcommand{\beq}{\begin{equation}}
\newcommand{\eeq}{\end{equation}}
\newcommand{\be}{\begin{equation*}}
\newcommand{\ee}{\end{equation*}}
\def\bbox at (#1,#2){\filldraw[fill=gray,-] (#1,#2)--++(0.6,0)--++(-0.4,-0.3)--++(-0.6,0)--(#1,#2);}
\def\rbbox at (#1,#2){\filldraw[fill=gray,-] (#1,#2)--++(0,0.6)--++(0.4,0.3)--++(0,-0.6)--(#1,#2);}
\def\fbbox at (#1,#2){\filldraw[fill=gray,-] (#1,#2)--++(0,0.6)--++(0.6,0)--++(0,-0.6)--(#1,#2);}
\def\wbox at (#1,#2){\draw[fill=white,-] (#1,#2)--++(0.6,0)--++(-0.4,-0.3)--++(-0.6,0)--(#1,#2);}
\def\rwbox at (#1,#2){\draw[fill=white,-] (#1,#2)--++(0,0.6)--++(0.4,0.3)--++(0,-0.6)--(#1,#2);}
\def\fwbox at (#1,#2){\draw[fill=white,-] (#1,#2)--++(0,0.6)--++(0.6,0)--++(0,-0.6)--(#1,#2);}
\def\dbox at (#1,#2){\draw[dashed] (#1,#2)--++(0.6,0)--++(-0.4,-0.3)--++(-0.6,0)--(#1,#2);}
\def\rdbox at (#1,#2){\draw[dashed] (#1,#2)--++(0,0.6)--++(0.4,0.3)--++(0,-0.6)--(#1,#2);}
\def\fdbox at (#1,#2){\draw[dashed] (#1,#2)--++(0,0.6)--++(0.6,0)--++(0,-0.6)--(#1,#2);}
\begin{document}
\begin{title}[Combinatorial modules]
{Combinatorial bases in quantum toroidal $\gl_2$ modules}
\end{title}

\author{M. Jimbo and E. Mukhin}

\address{MJ: %Department of Mathematics,
Professor Emeritus,
Rikkyo University, Toshima-ku, Tokyo 171-8501, Japan}
\email{jimbomm@rikkyo.ac.jp}
\address{EM: Department of Mathematics,
Indiana University Indianapolis,
402 N. Blackford St., LD 270, 
Indianapolis, IN 46202, USA}
\email{emukhin@iu.edu}

\begin{abstract}
    We show that many tame modules of the quantum toroidal $\gl_2$ 
algebra
can be explicitly constructed in a purely combinatorial way using the theory of $q$-characters. The examples include families of evaluation modules obtained from analytic continuation and automorphism twists of Verma modules of the quantum affine $\gl_2$ algebra. 
The combinatorial bases in the modules are labeled by colored plane partitions with various properties.
\end{abstract}
\maketitle

\section{Introduction}
The problem of finding a good basis in a representation is a central problem 
in representation theory. 
There are various approaches depending on the definition of the word  ``good". The theory of canonical \cite{L90} and crystal bases \cite{K90} in representations of quantum algebras and quantum affine algebras is compatible with the change $q\to q^{-1}$ 
in an 
appropriate sense. The Gelfand-Tsetlin bases in symmetric groups, see e.g. \cite{OV05}, and in
Lie algebras, see \cite{GT50}, 
respect the branching rules. The bases of Bethe vectors in various tensor products are eigenbases for a family of commuting Hamiltonians, \cite{B31}.

In this note, we discuss bases which we call combinatorial. 
A quantum affine algebra at level zero has a family of
commutative Cartan currents. If a module is tame, that is if the Cartan currents are diagonalizable and each eigenspace is one-dimensional, we get (up to normalization) a combinatorial basis of eigenvectors.

%Combinatorial basis are closely related to the other bases mentioned above.
%Indeed, the Cartan algebra is a generalization of the Gelfand-Tsetlin algebra 
%The Cartan algebra can be also thought as limits of Bethe algebra and therefore combinatorial bases are limits of the basis of Bethe vectors. Finally, the description of the combinatorial bases is very similar to the description of crystals. 

In combinatorial bases, the action of the quantum affine group can be written explicitly up to some additional constants(depending on the normalization of the basis) since the matrix coefficients of generating series are constant multiples of delta functions. In \cite{FJM2} the set of these additional constants is called the supplement.

 If a module is tame, the combinatorial basis (that is the set of eigenvalues of Cartan currents or the $q$-character) can be constructed recursively by the algorithm described in \cite{FM1}. To justify that the algorithm produces a correct result it is sufficient to fix the additional constants or the supplement. Solving the recursion and finding the supplement is one of the subjects of this text. We note that if a module is tame, the algorithm of \cite{FM1} does seem to produce the correct answer in all cases.
 
In type $A$, the combinatorial basis in all known cases is labeled by colored plane partitions with various boundary conditions. The creation and annihilation operators of a given color add or delete a box of the same color with  
explicit coefficients.
We develop a general scheme of fixing the supplement, see \eqref{constants 1}, \eqref{constants 2}, and Theorem \ref{main thm}. Our method is based on viewing plane partitions as 
``tensor products" of boxes in a lexicographic order. Such intuition comes from the construction of representations developed in \cite{FJMM1}.

In general, the problem of determining the supplement is not solved. See, for example, Sections 5.2 and  5.3 of \cite{FJM2} for the discussion of this issue.

\medskip 

Sometimes, one can construct a combinatorial basis in a 
module over a quantum algebra even if the level of the module is not zero or if the eigenvalues of Cartan currents are not distinct. It happens if one manages to upgrade the action of the quantum algebra to a tame level zero module over an affinization of the quantum algebra. 
We list known examples.

\begin{itemize}
\item Any finite--dimensional irreducible $U_q\gl_n$ module can be considered as an evaluation module over $U_q\widehat\gl_n$. The evaluation modules are tame and therefore one gets a combinatorial basis depending on a
 parameter. The $q$-character is known, Lemma 4.7 in \cite{FM2}, which gives 
a
parameterization of the combinatorial basis  by semi-standard Young tableaux.  A semi-standard Young tableau can be interpreted as a plane partition over 
a Young diagram
of height at most $n$ with strictly increasing height along columns.

\item The modules of minimal models of deformed affine $W$-algebras of type $\gl_n$. In this case, the combinatorial answer is the cylindrical partitions, see  \cite{FFJMM}.  It includes the deformed Virasoro algebra 
(multiplied by an
additional Heisenberg current) 
when $n=2$.

\item  Integrable irreducible  $U_q\widehat{\gl}_n$-modules of highest weight $(0,\dots,0,q,0,\dots,0)$ can be extended to a module over the quantum toroidal algebra of type $\gl_n$. This extension depends on a parameter $q_3$ of the toroidal algebra and it is an evaluation module only for $q_3^n=q^2$. The combinatorial basis is parameterized by 
colored partitions. The corresponding modules over toroidal algebras are known as Fock modules, 
\cite{FJMM1}.

\item  The supersymmetric analog of the previous item is also known, see \cite{BM}. The combinatorial basis is parameterized by 
colored super partitions.
\end{itemize}

We extend this list for the case of $\gl_2$ as follows.

\begin{itemize}
\item  The $U_q\widehat{\gl}_2$ Verma modules. The corresponding evaluation module over the quantum
toroidal algebra is tame. We compute the corresponding basis and 
matrix coefficients. 
The combinatorial set is the set of pairs of vertical partitions, see Figure \ref{verma picture}.

\item  The relaxed $U_q\widehat{\gl}_2$ Verma modules. These are modules of generic level induced from a ``one line" representation of $U_q{\gl}_2$. The corresponding weight diagram is given in Figure \ref{flat pic}. The evaluation 
module over the quantum toroidal algebra is tame and we compute the corresponding basis and  matrix coefficients. 
The combinatorial set is the set of 
pairs of vertical partitions with an additional single tower over a box,
see Figure \ref{relaxed verma picture}. 
 
\item Slanted slope $m$ relaxed $U_q\widehat{\gl}_2$ Verma modules, $m\in \Z$. These modules are obtained from the relaxed $U_q\widehat{\gl}_2$ Verma modules via a twist by an automorphism.
The corresponding weight diagram is given in Figure \ref{slanted pic}. The evaluation module over the quantum
toroidal algebra is tame and we compute the corresponding basis and 
matrix coefficients. 
The combinatorial set is the set of pairs of vertical partitions with an additional tower over a staircase consisting of $2|m|+1$ boxes, see Figure \ref{slanted relaxed verma picture}. 

\end{itemize}

One may wonder how rich the class of modules with combinatorial bases is. 
In other words, how many tame modules do we have? Tame modules for $U_q\widehat\gl_n$ have been classified in \cite{NT} and they correspond to (unions of) skew Young diagrams. Tame modules for $U_q\hat{\mathfrak o}_{2n+1}$ (type B) are also known, \cite{BM}, the corresponding combinatorial set is given by certain non-intersecting paths, \cite{MY}. From the examples above, it is clear that for quantum toroidal algebras the set of tame modules is quite
large and interesting and deserves further study.

Many constructions discussed in the paper can be applied to the case of $\gl_n$, $n>2$. We hope to address  combinatorial bases of $\gl_n$ type in a separate publication.

We also expect, that one can define an affine crystal  structure on the combinatorial sets corresponding to slanted relaxed Verma modules.

\medskip

Our initial motivation was 
the study of integrable systems 
whose Hamiltonians are transfer matrices corresponding to quantum affine and quantum toroidal algebras. For 
highest weight modules, there is a powerful Bethe ansatz method to study the spectrum of the model. The relaxed $U_q\widehat{\gl}_2$ Verma modules
form the most natural class of non-highest weight modules of generic level for the quantum affine algebra, see \cite{FST}. %In particular, these modules naturally appear in the setting of the Wakimoto realization of $U_q\widehat{\gl}_2$ modules. 
One can consider them as analytic continuation of Verma modules and therefore one could try to understand the integrable system using continuation of the Bethe ansatz. 

Combinatorially, the slanted relaxed Verma modules look as a natural generalization, however, it is not clear how to attack the corresponding integrable system (except for the cases of $m=0,1$).

\medskip

The paper is constructed as follows. In Section \ref{sec:finite} we recall the quantum algebra $U_q\gl_2$ and the quantum affine algebra $U_q\widehat{\gl}_2$ and 
give a few simple properties. In particular, in Section \ref{sec:rep} we introduce the slanted relaxed Verma modules obtained from Verma modules by analytic continuation and automorphism twists. In Section \ref{sec:tor}, we discuss the quantum toroidal algebra $\E$ associated to $\gl_2$. In particular, in Section \ref{sec:combinatorial} we describe a way to construct combinatorial $\E$-modules, resulting in Theorem \ref{main thm} which we prove in Section \ref{proof subsection}. In Section \ref{sec:examples} we discuss various combinatorial $\E$-modules.

\medskip

%For $n\ge2$, let $(a_{i,j})_{i,j\in\Z/n\Z}$ be 
%the Cartan matrix of type $A^{(1)}_{n-1}$. When $n=1$, we set $a_{0,0}=0$.

%Set $\al_i=\varepsilon_{i-1}-\varepsilon_i$ and $\La_i=\sum_{j=0}^{i-1}\varepsilon_j$,  $1\le i \le n-1$. 
%We have $(\al_i,\Lambda_j)=\delta_{i,j},$ $(\al_i,\al_j)=a_{i,j}$.

%Let $ \C\bar P=(\sum_{i\in Z/n\Z}\varepsilon_i)^\perp\subset \C P$. For $p\in\C P$ denote $\bar p\in\C \bar P$ the projection along  $\C (\sum_{i\in \Z/n\Z}\varepsilon_i)$. 
%Then $\bar\al_i$, $\bar\Lambda_i$, $1\le i\le  n-1$, are simple roots and fundamental weights of $\mathfrak{sl}_n$ respectively. 

\section{The quantum affine algebra associated to $\gl_2$}\label{sec:finite}

Fix $q,d\in\C^{\times}$
and set $q_1=q^{-1}d, q_2=q^2, q_3=q^{-1}d^{-1}$. Then $q_1q_2q_3=1$. 

Fix $\log q, \log d\in\C$, so that $q=e^{\log q}, d=e^{\log d}$. 

We assume that  $q,d$ are generic: for rational numbers $a,b$, the equality $q^ad^b=1$ holds if and only if $a=b=0$.

We use the standard notation $[A,B]_p=AB-pBA$ and $[r]=(q^r-q^{-r})/(q-q^{-1})$.

\subsection{Quantum algebra $U_q{\gl}_2$}
Let $\C P=\oplus_{i\in \Z/2\Z} \C\varepsilon_i$ be a $2$-dimensional vector space with the chosen basis and a non-degenerate bilinear form such that $(\varepsilon_i,\varepsilon_j)=\delta_{i,j}$. Call the lattice $P=\oplus_{i\in \Z/2\Z}\Z\varepsilon_i$ the weight lattice. Let 
$\al=\varepsilon_1-\varepsilon_2$.

The quantum $\gl_2$ algebra $U_q{\gl}_2$ 
 has generators $e_1$, $f_1$, $q^h$, $h\in P$,
  with the defining relations
  \begin{align*}
  &q^{h}q^{h'}=q^{h+h'}, \qquad q^0=1, \qquad q^{h}e_1=q^{(h,\al)}e_1q^{h},
\qquad q^{h}f_1=q^{-(h, \al)}f_1q^{h}\,, 
  &[e_1,f_1]=\frac{K-K^{-1}}{q-q^{-1}}\,,
  \end{align*} 
where $K=q^{\al}$.

The quantum $\sln_2$ algebra $U_q{\sln}_2$ is the subalgebra of  $U_q{\gl}_2$ generated by $e_1,f_1$, $K^{\pm 1}$.
  
The element ${\sf t}=q^{\varepsilon_1+\varepsilon_2}\in U_q{\gl}_2$ is central and split.

The Casimir element 
$\mathcal C=qK+q^{-1}K^{-1}+(q-q^{-1})^2f_1e_1$
is central in $U_q\gl_2$.

\subsection{Quantum affine algebra $U_q\widehat{\gl}_2$}\label{sec:affine}
%The Cartan matrix of type $\hat {\rm{A}}_1$, $(a_{ij})_{i,j\in\Z/2\Z}$, is given by $a_{ii}=2$, $a_{i,i+1}=a_{i+1,i}=-2$. 

The quantum affine algebra $U_q\widehat{\gl}_2$ in the 
Drinfeld new  realization is defined
 by generators $x^\pm_{k}$, $h_{j,r}$, $q^h$, $C^{\pm1}$, where
$j=1,2$, $k\in\Z$, $r\in\Z\setminus\{0\}$,   $h\in P$,  with the defining relations
\begin{align*}
&\text{$C^{\pm 1}$ are central},\quad CC^{-1}=1,
\quad q^{h}q^{h'}=q^{h+h'},\quad
q^0=1\,,\quad [q^h,h_{j,r}]=0\,, \\
&q^hx^\pm(z)q^{-h}=q^{\pm(h,\alpha)}x^\pm(z)\, ,
\\
&[h_{i,r},h_{j,s}]=\delta_{r+s,0}%\frac{[ra_{i,j}]}{r}
\,\bar a_{i,j}(r)\,
\frac{C^r-C^{-r}}{q-q^{-1}}\,,
\\
&[h_{i,r},x^{\pm}(z)]=\pm%\frac{[ra_{i,1}]}{[r]}
\,\bar a_{i,1}(r)\,
C^{-(r\pm|r|)/2}z^rx^\pm(z)\,,
\\
&[x^+(z),x^-(w)]=\frac{1}{q-q^{-1}}\Bigl(
\delta\bigl(C\frac{w}{z}\bigr)\phi^+(w)-
\delta\bigl(C\frac{z}{w}\bigr)\phi^-(z)
\Bigr)\,,
\\
&(z-q^{\pm 2}w)x^\pm(z)x^\pm(w)+
(w-q^{\pm 2}z)x^\pm(w)x^\pm(z)=0\,.
\end{align*}
Here we set $x^\pm(z)=\sum_{k\in\Z}x^\pm_{k}z^{-k}$, 
$\phi^{\pm}(z)=K^{\pm1}
\exp\bigl(\pm(q-q^{-1})\sum_{r>0}h_{1,\pm r}z^{\mp r}\bigr)$, where
$K=q^{\al}$
and 
$$
\bar a_{i,j}(r)=\frac{[r]}{r}\bigl((q^r+q^{-r})\delta_{i,j}-2(1-\delta_{i,j})\bigr)\,.
$$

\medskip
The element ${\sf t}=q^{\varepsilon_1+\varepsilon_2}$ is central and split.

The subalgebra of $U_{q}\widehat{\gl}_2$ generated by $x^{\pm}_{0}$, $q^h$,  $h\in P$, is isomorphic to $U_q\gl_2$.

The subalgebra of $U_q\widehat{\gl}_2$ generated by $x^{\pm}_{k}, h_{1,r}, K^{\pm1}$, $k\in \Z$, 
$r\in\Z\setminus\{0\}$,  and $C^{\pm 1}$, is the quantum affine $\sln_2$  algebra $U_q\widehat{\sln}_2$.
  
Algebra  $U_{q}\widehat{\gl}_2$ contains a Heisenberg subalgebra commuting with  $U_q\widehat{\sln}_2$, generated by the elements $Z_r$, $r\in\Z\setminus\{0\}$, given by
\begin{align*}%\label{Zr}
Z_r=\frac{2}{q^r-q^{-r}}
h_{1,r}+
\frac{q^r+q^{-r}}{q^r-q^{-r}}
h_{2,r}\,,
\quad [Z_r,Z_s]=-\delta_{r+s,0}[2r]\frac{1}{r}\frac{C^r-C^{-r}}{q-q^{-1}}\,.
\end{align*}

\medskip

For $\kappa\in\C^\times$ we denote $U_{q,\kappa}\widehat{\gl}_2$ 
the quotient of {$U_q\widehat{\gl}_2$ } %$U'_q\widehat{\gl}_2$ 
by the relation $C=\kappa$.

The algebra ${U}_{q,\kappa}\widehat{\gl}_2$ is $\Z^2$ graded:
\begin{align}\label{degree}
\deg x^\pm_k=(\pm1,k), \qquad \deg h_{i,k}=(0,k), 
\qquad \deg q^h=(0,0).
\end{align}
We call the first component of the degree the weight\footnote{Often it is called ``spin" instead.} and the second component the homogeneous degree.

We denote $\widetilde{U}_{q,\kappa}\widehat{\gl}_2$ the completion of the algebra $U_{q,\kappa}\widehat{\gl}_2$ with respect to the homogeneous degree in the negative direction. 
%Elements of $\widetilde{U}_{q,\kappa}\widehat{\gl}_n$ have the form $\sum_{r=r_0}^\infty g_r$, where  $g_r\in U_{q,\kappa}\widehat{\gl}_n$, $\deg g_r=r$.

%We denote by ${\mathfrak b}_{q,\kappa}$
%the subalgebra of $U'_{q,\kappa}\widehat{\gl}_n$ generated by $x_{k}^{\pm}$,  $h_{j,r}$, $q^h$,  $j=0,1$, $k\ge 0$, $r\ge 1$, $h\in P$.

\medskip 

We have several automorphisms of $U_q\widehat{\gl}_2$.  

For $a\in\C^\times$, the shift of spectral parameter automorphism $\tau_a:\ U_q\widehat{\gl}_2\to U_q\widehat{\gl}_2$ maps 
\beq\label{shift}
x^\pm(z)\mapsto x^\pm(z/a), \qquad \phi^\pm(z)\mapsto \phi^\pm(z/a), \qquad q^h\mapsto q^h,\qquad C\to C.
\eeq
The automorphism $\tau_a$ preserves the degree.

The automorphism $\omega:\ U_q\widehat{\gl}_2\to U_q\widehat{\gl}_2$ maps
\be
x^\pm(z)\mapsto z^{\pm 1} x^\pm(z), \qquad \phi^\pm(z)\mapsto \phi^\pm(z), \qquad q^h\mapsto q^h,\qquad C\to C.
\ee 
The automorphism $\omega$ preserves the weight but not the homogeneous degree.

\subsection{Examples of representations.}\label{sec:rep}
We start with 
representations of $U_q\gl_2$.

Let $L$ be a $\C$-vector space with basis $\{v_i\}_{i\in\Z}$. 
For $a,b,c\in\C$,
let 
\be
f_1v_i=v_{i+1},\qquad K v_0=q^a v_0, \qquad {\sf t}\,v_0=q^bv_0,\qquad  e_1v_0=cv_{-1}, 
\ee 
where $i\in\Z$. Then for generic $a,b,c$, this data is uniquely extended to a  $U_q\gl_2$-module structure on $L$. Then one can analytically continue this structure to
all values of $a,b,c$.
We call the resulting module $L(a,b,c)$. 

Given $a,b$, for generic $c$, the module $L(a,b,c)$ is irreducible. If $c=0$, then $L$ has a submodule isomorphic to 
$U_q\gl_2$ Verma module with highest weight $q^a$.  Following \cite{FST}, we call $L(a,b,c)$ a relaxed Verma module.

Note that in $L(a,b,c)$, the Casimir  element acts by the constant $\mathcal C=q^{a+1}+q^{-a-1}+(q-q^{-1})^2c$. 
Thus, $L(a,b,c)$ is uniquely determined by the values of central elements ${\sf t}, \mathcal C$ and by the weight of $v_0$. 

The two modules $L(a,b,c)$ and $L(a',b',c')$ are isomorphic if and only if values of central elements ${\sf t}, \mathcal C$ are equal, and $a-a'\in 2\Z$.

\medskip

Let $U^+\subset  U_q\widehat{\gl}_2$ be the subalgebra generated by $q^h, x^\pm_k, h_{j,k+1}$ with $k\geq 0$, $j=0,1$, $h\in P$, and $C$. Let $\kappa\in \C^\times$. 
We extend the action of $U_q\gl_2$ on $L(a,b,c)$ to $U^+$ 
action by setting $C=\kappa$ and
\be
x^\pm_k=h_{j,k}=0, \qquad k>0.
\ee 
Set  
\be
\hat L(a,b,c;\kappa)=\on{Ind}_{U^+}^{U_q\widehat{\gl}_2} L(a,b,c).
\ee 
Given $a,b$, for generic $c,\kappa$, 
the $U_q\widehat{\gl}_2$-module $\hat L(a,b,c;\kappa)$ is irreducible. If $c=0$, then it has a submodule isomorphic to 
$U_q\widehat{\gl}_2$ Verma module with highest weight $(\kappa q^{-a}, q^a)$. 
Following \cite{FST}, we call $\hat L(a,b,c;\kappa)$ a relaxed $U_q\widehat{\gl}_2$ Verma module.

\begin{figure}[ht]
\centering
\begin{tikzpicture}
\draw[dashed, ->] (-6,0) -- (6,0);
\node at (5.5,-0.4) {weight};
\draw[dashed, <-] (-5.8,0.4) -- (-5.8,-3.8);
\node at (-7.0,-3.3) {hom deg};
\coordinate (a1) at (-4.5,0);
\coordinate (a2) at (-3,0);
\coordinate (a3) at (-1.5,0);
\coordinate (a4) at (0,0);
\coordinate (a5) at (1.5,0);
\coordinate (a6) at (3,0);

\node[above] at (a1) {\tiny $-3$}; 
\node[above] at (a2) {\tiny $-2$};
\node[above] at (a3) {\tiny $-1$};
\node[above] at (a4) {\tiny $0$};
\node[above] at (a5) {\tiny $1$};
\node[above] at (a6) {\tiny $2$};

\node at (-6.2,-0.7) {\tiny $-1$};
\draw (-5.9,-0.7) -- (-5.7,-0.7);
\node at (-6.2,-1.4) {\tiny $-2$};
\draw (-5.9,-1.4) -- (-5.7,-1.4);
\node at (-6.2,-2.1) {\tiny $-3$};
\draw (-5.9,-2.1) -- (-5.7,-2.1);
\node at (-6.2,-2.8) {\tiny $-4$};
\draw (-5.9,-2.8) -- (-5.7,-2.8);

%\draw[fill,black] (a1) circle [radius=0.08];
\draw[fill,black] (a2) circle [radius=0.08];
\draw[fill,black] (a3) circle [radius=0.08];
\draw[fill,black] (a4) circle [radius=0.08];
\draw[fill,black] (a5) circle [radius=0.08];
\draw[fill,black] (a6) circle [radius=0.08];

%\draw[dashed] (a1) --++ (0.5,-2.5);
%\draw[dashed] (a1) --++ (-0.5,-2.5);
\draw[dashed] (a2) --++ (0.5,-2.5);
\draw[dashed] (a2) --++ (-0.5,-2.5);
\draw[dashed] (a3) --++ (0.5,-2.5);
\draw[dashed] (a3) --++ (-0.5,-2.5);
\draw[dashed] (a4) --++ (0.5,-2.5);
\draw[dashed] (a4) --++ (-0.5,-2.5);
\draw[dashed] (a5) --++ (0.5,-2.5);
\draw[dashed] (a5) --++ (-0.5,-2.5);
\draw[dashed] (a6) --++ (0.5,-2.5);
\draw[dashed] (a6) --++ (-0.5,-2.5);

%\node at ($(a1)-(0,0.7)$) {\tiny$\bs 4$};
%\node at ($(a1)-(0,1.4)$) {\tiny${\bf 10}$};
%\node at ($(a1)-(0,2.1)$) {\tiny${\bf 20}$};
%\node at ($(a1)-(0,2.7)$) {$\vdots$};

\node at ($(a2)-(0,0.7)$) {\tiny${\bf 4}$};
\node at ($(a2)-(0,1.4)$) {\tiny${\bf 14}$};
\node at ($(a2)-(0,2.1)$) {\tiny${\bf 40}$};
\node at ($(a2)-(0,2.7)$) {$\vdots$};

\node at ($(a3)-(0,0.7)$) {\tiny${\bf 4}$};
\node at ($(a3)-(0,1.4)$) {\tiny${\bf 14}$};
\node at ($(a3)-(0,2.1)$) {\tiny${\bf 40}$};
\node at ($(a3)-(0,2.7)$) {$\vdots$};

\node at ($(a4)-(0,0.7)$) {\tiny${\bf 4}$};
\node at ($(a4)-(0,1.4)$) {\tiny${\bf 14}$};
\node at ($(a4)-(0,2.1)$) {\tiny${\bf 40}$};
\node at ($(a4)-(0,2.7)$) {$\vdots$};

\node at ($(a5)-(0,0.7)$) {\tiny${\bf 4}$};
\node at ($(a5)-(0,1.4)$) {\tiny${\bf 14}$};
\node at ($(a5)-(0,2.1)$) {\tiny${\bf 40}$};
\node at ($(a5)-(0,2.7)$) {$\vdots$};

\node at ($(a6)-(0,0.7)$) {\tiny${\bf 4}$};
\node at ($(a6)-(0,1.4)$) {\tiny${\bf 14}$};
\node at ($(a6)-(0,2.1)$) {\tiny${\bf 40}$};
\node at ($(a6)-(0,2.7)$) {$\vdots$};

\node at (4.5,-1) {$\dots$};
\node at (4.5,-1.5) {$\dots$}; 
\node at (4.5,-2) {$\dots$}; 

\node at (-4.5,-1) {$\dots$};
\node at (-4.5,-1.5) {$\dots$}; 
\node at (-4.5,-2) {$\dots$};

\end{tikzpicture}
\caption{Grading in a relaxed $U_q\widehat{\gl}_2$ Verma module.}
\label{flat pic}
\end{figure}
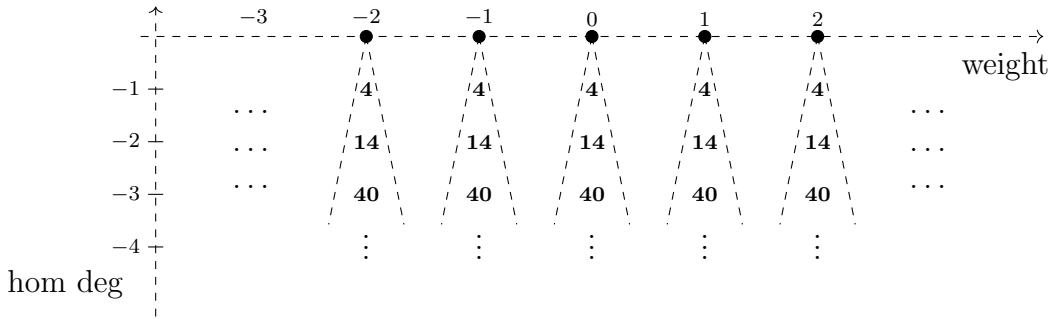

Set $\deg v_0=(0,0)$.
Then the $\Z^2$
grading on $U_q\widehat{\gl}_2$ induces a  $\Z^2$
grading on $\hat L(a,b,c;\kappa)$. We note that a vector of weight $a+2k$ has the first component of degree in  $\hat L(a,b,c;\kappa)$ equal to $k$. 
The grade diagram of $\hat L(a,b,c;\kappa)$ is given on Figure \ref{flat pic}. 
The dimension $d_s$
of the space of degree $(k,-s)$ is independent of $k$ and is given by 
\be
\sum_{s=0}^\infty d_s t^s =\frac{1}{\prod_{i=1}^\infty(1-t^i)^4}= 1 + 4 t + 14 t^2 + 40 t^3 + 105 t^4 + 252 t^5 + \dots \ .
\ee 

Let $\hat L_m(a,b,c;\kappa)$ be the module  $\hat L(a,b,c;\kappa)$ twisted by automorphism $\omega^{m}$. We call the module $\hat L_m(a,b,c;\kappa)$ the slanted relaxed Verma module of slope $m$. That is, 
$\hat L_m(a,b,c;\kappa)=\hat L(a,b,c;\kappa)$ as a vector space and for $v\in \hat L_m(a,b,c;\kappa)$, $g\in U_q\widehat{\gl}_2$, we have $g\cdot v=\omega^{m}(g)v$.

The automorphism $\omega$ changes the homogeneous degree. Thus,  grade diagram of $\hat L_m(a,b,c;\kappa)$ is obtained from that of  $\hat L(a,b,c;\kappa)$ by moving vectors of weight $s$ down by by $sm$. The grade diagram of $\hat L_m(a,b,c;\kappa)$ with $m=2$ is given on Figure \ref{slanted pic}. The dimension of the space down $s$ from the dashed line is still $d_s$.

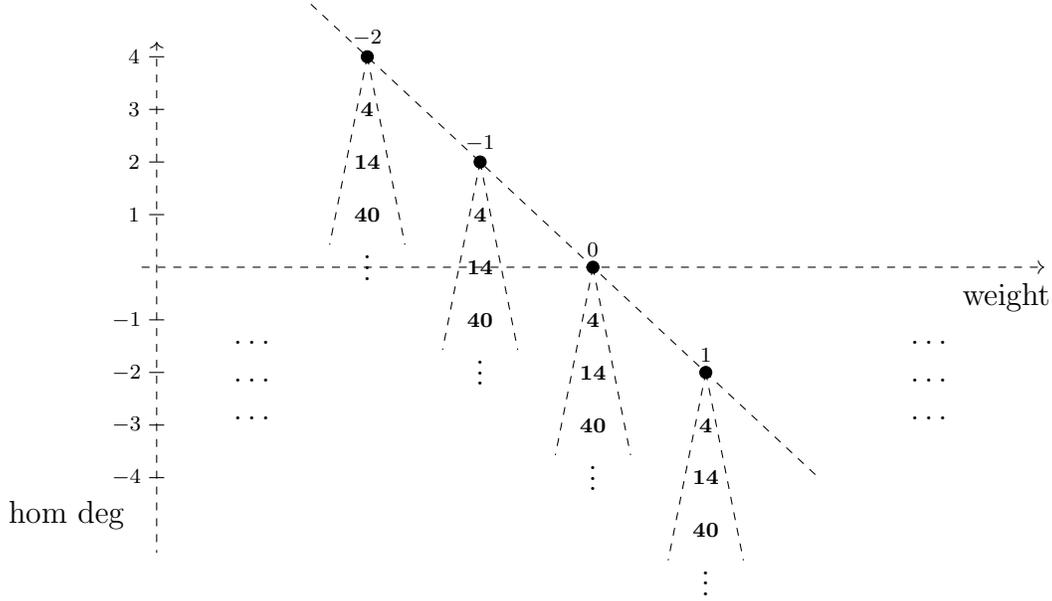
\begin{figure}[ht]
\centering
\begin{tikzpicture}
\draw[dashed, ->] (-6,0) -- (6,0);
\node at (5.5,-0.4) {weight};
\draw[dashed, <-] (-5.8,3) -- (-5.8,-3.8);
\node at (-7.0,-3.3) {hom deg};
\coordinate (a1) at (-4.5,4.2);
\coordinate (a2) at (-3,2.8);
\coordinate (a3) at (-1.5,1.4);
\coordinate (a4) at (0,0);
\coordinate (a5) at (1.5,-1.4);
\coordinate (a6) at (3,-2.8);

\draw[dashed] (-3.75,3.5) -- (a6);

%\node[above] at (a1) {\tiny $a-6$}; 
\node[above] at (a2) {\tiny $-2$};
\node[above] at (a3) {\tiny $-1$};
\node[above] at (a4) {\tiny $0$};
\node[above] at (a5) {\tiny $1$};
%\node[above] at (a6) {\tiny $2$};

\node at (-6.2,-0.7) {\tiny $-1$};
\draw (-5.9,-0.7) -- (-5.7,-0.7);
\node at (-6.2,-1.4) {\tiny $-2$};
\draw (-5.9,-1.4) -- (-5.7,-1.4);
\node at (-6.2,-2.1) {\tiny $-3$};
\draw (-5.9,-2.1) -- (-5.7,-2.1);
\node at (-6.2,-2.8) {\tiny $-4$};
\draw (-5.9,-2.8) -- (-5.7,-2.8);

\node at (-6.1,0.7) {\tiny $1$};
\draw (-5.9,0.7) -- (-5.7,0.7);
\node at (-6.1,1.4) {\tiny $2$};
\draw (-5.9,1.4) -- (-5.7,1.4);
\node at (-6.1,2.1) {\tiny $3$};
\draw (-5.9,2.1) -- (-5.7,2.1);
\node at (-6.1,2.8) {\tiny $4$};
\draw (-5.9,2.8) -- (-5.7,2.8);

%\draw[fill,black] (a1) circle [radius=0.08];
\draw[fill,black] (a2) circle [radius=0.08];
\draw[fill,black] (a3) circle [radius=0.08];
\draw[fill,black] (a4) circle [radius=0.08];
\draw[fill,black] (a5) circle [radius=0.08];
%\draw[fill,black] (a6) circle [radius=0.08];

%\draw[dashed] (a1) --++ (0.5,-2.5);
%\draw[dashed] (a1) --++ (-0.5,-2.5);
\draw[dashed] (a2) --++ (0.5,-2.5);
\draw[dashed] (a2) --++ (-0.5,-2.5);
\draw[dashed] (a3) --++ (0.5,-2.5);
\draw[dashed] (a3) --++ (-0.5,-2.5);
\draw[dashed] (a4) --++ (0.5,-2.5);
\draw[dashed] (a4) --++ (-0.5,-2.5);
\draw[dashed] (a5) --++ (0.5,-2.5);
\draw[dashed] (a5) --++ (-0.5,-2.5);
%\draw[dashed] (a6) --++ (0.5,-2.5);
%\draw[dashed] (a6) --++ (-0.5,-2.5);

%\node at ($(a1)-(0,0.7)$) {\tiny$\bs 4$};
%\node at ($(a1)-(0,1.4)$) {\tiny${\bf 10}$};
%\node at ($(a1)-(0,2.1)$) {\tiny${\bf 20}$};
%\node at ($(a1)-(0,2.7)$) {$\vdots$};

\node at ($(a2)-(0,0.7)$) {\tiny${\bf 4}$};
\node at ($(a2)-(0,1.4)$) {\tiny${\bf 14}$};
\node at ($(a2)-(0,2.1)$) {\tiny${\bf 40}$};
\node at ($(a2)-(0,2.7)$) {$\vdots$};

\node at ($(a3)-(0,0.7)$) {\tiny${\bf 4}$};
\node at ($(a3)-(0,1.4)$) {\tiny${\bf 14}$};
\node at ($(a3)-(0,2.1)$) {\tiny${\bf 40}$};
\node at ($(a3)-(0,2.7)$) {$\vdots$};

\node at ($(a4)-(0,0.7)$) {\tiny${\bf 4}$};
\node at ($(a4)-(0,1.4)$) {\tiny${\bf 14}$};
\node at ($(a4)-(0,2.1)$) {\tiny${\bf 40}$};
\node at ($(a4)-(0,2.7)$) {$\vdots$};

\node at ($(a5)-(0,0.7)$) {\tiny${\bf 4}$};
\node at ($(a5)-(0,1.4)$) {\tiny${\bf 14}$};
\node at ($(a5)-(0,2.1)$) {\tiny${\bf 40}$};
\node at ($(a5)-(0,2.7)$) {$\vdots$};

%\node at ($(a6)-(0,0.7)$) {\tiny${\bf 4}$};
%\node at ($(a6)-(0,1.4)$) {\tiny${\bf 10}$};
%\node at ($(a6)-(0,2.1)$) {\tiny${\bf 20}$};
%\node at ($(a6)-(0,2.7)$) {$\vdots$};

\node at (4.5,-1) {$\dots$};
\node at (4.5,-1.5) {$\dots$}; 
\node at (4.5,-2) {$\dots$}; 

\node at (-4.5,-1) {$\dots$};
\node at (-4.5,-1.5) {$\dots$}; 
\node at (-4.5,-2) {$\dots$};

\end{tikzpicture}
\caption{Grading in a slope 2 relaxed $U_q\widehat{\gl}_2$ Verma module.}
\label{slanted pic}
\end{figure}

\section{Quantum toroidal $\gl_2$ algebra}\label{sec:tor}

\subsection{The perpendicular realization $\E^\perp$}\label{sec:En}
For $i,j\in\Z/2\Z$ and $r\neq 0$, we set
%\begin{align*}
%a_{i,j}(r)=\frac{[r]}{r}\times\begin{cases}
% q^r+q^{-r} & (i\equiv j),\\
% -d^r-d^{-r} & \jm{(i\not\equiv j),}
%\end{cases}
%\end{align*}
%and
\begin{align*}
 g_{i,j}(z,w)=\begin{cases}
	      z-q_2w & (i\equiv j),\\
              (z-q_1w)(z-q_3w)& (i\not\equiv j).
	     \end{cases}
\end{align*}

The quantum toroidal algebra  of type $\gl_2$, 
which we denote by $\E^\perp$, is a unital associative
algebra generated by $E^\perp_{i,k},F^\perp_{i,k},H_{i,r}^\perp$ 
and elements $q^{h}$, $C^{\pm 1}$, 
where $i\in\Z/2\Z$, $k\in \Z$, $r\in\Z\backslash\{0\}$, 
$h\in P$.  
We set $K_1^\perp=K=q^{\alpha}$, $K_0^\perp=K^{-1}=q^{-\alpha}$.

We give the defining relations 
in terms of generating series
\begin{align*}
&E_i^\perp(z)=\sum_{k\in\Z}E_{i,k}^\perp z^{-k}, 
\quad
F_i^\perp(z)=\sum_{k\in\Z}F_{i,k}^\perp z^{-k},\\
&K^{\pm,\perp}_i(z)=(K_i^\perp)^{\pm1}\bar{K}^{\pm,\perp}_i(z)\,,
\quad
\bar{K}^{\pm,\perp}_i(z)=
\exp\Bigl(\pm(q-q^{-1})\sum_{r>0}H_{i,\pm r}^\perp z^{\mp r}\Bigr)\,.
\end{align*}
The relations are as follows.

\bigskip

\begin{align*}
&\text{$C$ is central},\quad CC^{-1}=1,\quad
q^{h}q^{h'}=q^{h+h'},\quad
q^0=1\,,
\\
&q^{h}E_i^\perp(z)q^{-h}=
q^{(h,\alpha_i)}E_i^\perp(z)\,,
\quad
q^{h}F_i^\perp(z)q^{-h}=
q^{-(h,\alpha_i)}F_i^\perp(z)\,,
\quad q^hK^{\pm,\perp}_i(z)=K^{\pm,\perp}_i(z)q^h\,,
\end{align*}
\begin{align*}
&K^{\pm,\perp}_i(z)K^{\pm,\perp}_j (w) = K^{\pm,\perp}_j(w)K^{\pm,\perp}_i (z), \\
&\frac{g_{i,j}(C^{-1}z,w)}{g_{i,j}(Cz,w)}
K^{-,\perp}_i(z)K^{+,\perp}_j (w) 
=
\frac{g_{j,i}(w,C^{-1}z)}{g_{j,i}(w,Cz)}
K^{+,\perp}_j(w)K^{-,\perp}_i (z),
\\
&(-1)^{i+j}g_{i,j}(z,w)K_i^{\pm,\perp}(C^{-(1\pm1)/2}z)E_j^\perp(w)+g_{j,i}(w,z)E_j^\perp(w)K_i^{\pm,\perp}(C^{-(1\pm1) /2}z)=0,
\\
&(-1)^{i+j}g_{j,i}(w,z)K_i^{\pm,\perp}(C^{-(1\mp1)/2}z)F_j^\perp(w)+g_{i,j}(z,w)F_j^\perp(w)K_i^{\pm,\perp}(C^{-(1\mp1) /2}z)=0\,,
\end{align*}
\begin{align*}
&[E_i^\perp(z),F_j^\perp(w)]=\frac{\delta_{i,j}}{q-q^{-1}}
(\delta\bigl(C\frac{w}{z}\bigr)K_i^{+,\perp}(w)
-\delta\bigl(C\frac{z}{w}\bigr)K_i^{-,\perp}(z))\,,
\end{align*}
\begin{align*}
&(-1)^{i+j}g_{i,j}(z,w)E_i^\perp(z)E_j^\perp(w)+g_{j,i}(w,z)E_j^\perp(w)E_i^\perp(z)=0, 
\\
&(-1)^{i+j}g_{j,i}(w,z)F_i^\perp(z)F_j^\perp(w)+g_{i,j}(z,w)F_j^\perp(w)F_i^\perp(z)=0,
\end{align*}

\begin{align*}
&\Sym_{z_1,z_2,z_3}[E_i^\perp(z_1),[E_i^\perp(z_2),[E_i^\perp(z_3),E_{i+1}^\perp(w)]_{q^2}]]_{q^{-2}} =0\,,
\\
&\Sym_{z_1,z_2,z_3}[F_i^\perp(z_1),[F_i^\perp(z_2),[F_i^\perp(z_3),F_{i+1}^\perp(w)]_{q^2}]]_{q^{-2}} =0\,.
\end{align*}
Here we use $\delta(z)=\sum_{k\in\Z} z^k$ and
\begin{align*}
&\Sym\ f(x_1,\dots,x_N) =\frac{1}{N!}
\sum_{\pi\in\GS_N} f(x_{\pi(1)},\dots,x_{\pi{(N)}})\,.
\end{align*}

\bigskip

The algebra $\E^\perp$ considered here is obtained from $\E_2$ in
\cite{FJMM2} by setting the second central element $K_0^\perp K_1^\perp$ to $1$, 
and adding the split central element ${\sf t}=q^{\varepsilon_1+\varepsilon_2}$.

\medskip

We note that the algebra $\E^\perp$ is invariant under the exchange of $q_1$ and $q_3$.

Algebra $\E^\perp$ is $(\Z^2\times \Z)$-graded by  
\begin{align*}%\label{tor degree}
\deg E_{i,k}^\perp=(1_i,k), \quad \deg F_{i,k}^\perp=(-1_i,k), \qquad \deg H_{i,r}^\perp=(0,0,r), \qquad  \deg C=\deg q^h=0, 
\end{align*}
where $1_0=(1,0)$ and $1_1=(0,1)$.
%We denote $\widetilde{\E}^\perp$ the completion of $\E^\perp$ with respect to the last component of this grading in the positive direction.

For $i=0,1$, there exist automorphisms $\omega_i:\  \E^\perp \to \E^\perp$ sending 
\be
E_j^\perp (z)\mapsto z^{\delta_{ij}} E_j^\perp(z), \qquad F_j^\perp(z)\mapsto z^{-\delta_{ij}} F_j^\perp(z), \qquad  K_i^{\pm,\perp}(z)\mapsto K_i^{\pm,\perp}(z),
\ee
and $q^h\mapsto q^h$, $C\mapsto C$. Note that $\omega_0\,\omega_1=\omega_1\,\omega_0$.
   
We have an embedding $v^\perp:\ U_q\widehat{\sln}_2\to \E^\perp$ such that 
\begin{align*}
&x^+(z)\mapsto E_1^\perp(d^{-1}z)\,,\quad  
x^-(z)\mapsto F_1^\perp(d^{-1}z)\,,\quad  
\phi^\pm(z)\mapsto K^{\pm,\perp}_1(d^{-1}z)\,,
\end{align*}
and $C\mapsto C$. We call the image of $v^\perp$ the vertical $U_q\widehat{\sln}_2$-subalgebra
and denote it by $U^{v,\perp}_q\widehat{\sln}_2\subset \E^\perp$. 

The subalgebra of $\E^\perp$ generated by  $E_1^\perp(z)$, $F_1^\perp(z)$, $K_0^{\pm,\perp}(z)$, $K_1^{\pm,\perp}(z)$,  $q^h$ and $C^{\pm 1}$ is isomorphic to $U_q\widehat{\gl}_2$. We call this subalgebra the vertical $U_q\widehat{\gl}_2$-subalgebra
and denote it by $U^{v,\perp}_q\widehat{\gl}_2\subset \E^\perp$.
 
\begin{prop}[\cite{M2},\cite{FJM1}] Let $u\in \C^{\times}$.
Let $C=q_3$.
There exists a surjecive homomorphism of algebras: $ev_u: \E^\perp \to \widetilde{U}_{q,q_3} \widehat{\gl}_2,$  such that $ev_u\circ v^\perp= id$ and $\deg(ev_u(E_{0,k}^\perp))=(-1,k)$, $\deg(ev_u(F_{0,k}^\perp))=(1,k)$.
\qed
\end{prop}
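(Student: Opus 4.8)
The plan is to define $ev_u$ explicitly on the generating currents of $\E^\perp$ and then check the defining relations family by family. Since the subalgebra generated by $E_1^\perp(z),F_1^\perp(z),K_0^{\pm,\perp}(z),K_1^{\pm,\perp}(z),q^h,C^{\pm1}$ is a copy of $U_q\widehat{\gl}_2$, I would first declare $ev_u$ on this vertical $U_q\widehat{\gl}_2$-subalgebra to be the inverse of that isomorphism, landing in $U_{q,q_3}\widehat{\gl}_2\subset\widetilde U_{q,q_3}\widehat{\gl}_2$. Concretely, on the $\sln_2$ part this is $E_1^\perp(z)\mapsto x^+(dz)$, $F_1^\perp(z)\mapsto x^-(dz)$, $K_1^{\pm,\perp}(z)\mapsto \phi^\pm(dz)$, together with $q^h\mapsto q^h$ and $C\mapsto q_3$; the color-$0$ Cartan current $K_0^{\pm,\perp}(z)$ is sent to the companion exponential current with constant part $K^{-1}$, chosen so that its image together with that of $K_1^{\pm,\perp}(z)$ recovers both Heisenberg families $h_{1,r}$ and $h_{2,r}$. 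Because $v^\perp$ is precisely the $d^{-1}$-shifted embedding, this choice gives $ev_u\circ v^\perp=\id$ on the nose.

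The substantive input is the definition of $ev_u$ on the color-$0$ raising and lowering currents $E_0^\perp(z),F_0^\perp(z)$, which are not in the vertical subalgebra. The $(\Z^2\times\Z)$-grading together with the required target degrees $\deg(ev_u(E_{0,k}^\perp))=(-1,k)$ and $\deg(ev_u(F_{0,k}^\perp))=(1,k)$ dictate the shape of the ansatz: $ev_u(E_0^\perp(z))$ must have affine weight $-1$ and so be built from the lowering current $x^-$, while $ev_u(F_0^\perp(z))$ is built from $x^+$. I would take $ev_u(E_0^\perp(z))$ to be a scalar multiple of $x^-$ at an argument rescaled by the spectral parameter $u$, dressed on both sides by normal-ordered exponentials of the Heisenberg generators $Z_{\pm r}$, which commute with the vertical $\sln_2$. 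Since $C=q_3\ne 1$ these vertex operators have nontrivial operator products, and the resulting expressions are genuine infinite series in the negative homogeneous direction — this is exactly why the target is the completion $\widetilde U_{q,q_3}\widehat{\gl}_2$. The first check is that these formulas are \emph{well defined}, i.e. lie in that completion.

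The relations then split into two groups. All relations internal to the vertical $U_q\widehat{\gl}_2$-subalgebra — the $K$–$K$ relations, the color-$1$ $K$–$E$, $K$–$F$, $E$–$F$, quadratic and Serre relations, and the $q^h$- and $C$-relations — hold automatically, since there $ev_u$ is an algebra isomorphism onto $U_{q,q_3}\widehat{\gl}_2$. The real work is in the mixed relations: the color-$0$/color-$1$ quadratic relations with structure function $g_{0,1}(z,w)=(z-q_1w)(z-q_3w)$, the off-diagonal commutators $[E_0^\perp(z),F_1^\perp(w)]=0$ and $[E_1^\perp(z),F_0^\perp(w)]=0$, the diagonal relation $[E_0^\perp(z),F_0^\perp(w)]$ that must reproduce the delta functions and the current $K_0^{\pm,\perp}$, and the color-$0$ Serre relation. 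Each is verified by an operator-product computation: the Heisenberg dressing supplies precisely the zeros of $g_{0,1}$ and $g_{1,0}$, the $x^\mp$-part supplies the quantum affine $\sln_2$ commutation, and the two combine to the stated identity; the value $C=q_3$ is exactly what makes the Heisenberg two-point function match $g_{0,1}$.

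Surjectivity is then immediate: the image already contains the vertical $U_q\widehat{\gl}_2$-subalgebra, hence all $x^\pm_k$, $q^h$, $C=q_3$, and (from $K_0^{\pm,\perp},K_1^{\pm,\perp}$) both $h_{1,r}$ and $h_{2,r}$, a topological generating set of $\widetilde U_{q,q_3}\widehat{\gl}_2$. I expect the main obstacle to be twofold. First, pinning down the exact closed form of $E_0^\perp(z),F_0^\perp(z)$ — the precise $u$-dependence and exponential prefactors — so that the degree constraints and the normalization in $[E_0^\perp(z),F_0^\perp(w)]$ come out correctly. Second, the color-$0$ Serre relation, which after substitution becomes a nontrivial rational-function identity in three spectral variables and is the heaviest of the normal-ordering computations. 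As the statement is quoted from \cite{M2} and \cite{FJM1}, in practice I would import their explicit formulas for the color-$0$ currents and reduce the proof to verifying these operator-product identities.
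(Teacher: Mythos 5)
The paper does not prove this proposition: it is imported verbatim from \cite{M2} and \cite{FJM1} and closed with an immediate \qed, so there is no in-paper argument to compare against. Measured against the construction in those references, your outline has the right architecture. You correctly read off from $ev_u\circ v^\perp=\id$ that the map is forced on the vertical $U_q\widehat{\gl}_2$-subalgebra, from the degree constraint $\deg(ev_u(E^\perp_{0,k}))=(-1,k)$ that $ev_u(E_0^\perp(z))$ must be built from $x^-$ (and $ev_u(F_0^\perp(z))$ from $x^+$), and you identify the correct dressing: normal-ordered exponentials in the Heisenberg generators $Z_{\pm r}$ that the paper introduces in Section 2.2 precisely because they commute with $U_q\widehat{\sln}_2$. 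You also correctly locate why the target is the completion $\widetilde{U}_{q,q_3}\widehat{\gl}_2$ and why $C=q_3$ is the consistency condition matching the Heisenberg two-point function to $g_{0,1}$.

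That said, as a proof the proposal has a genuine gap rather than a different route: the entire mathematical content of the proposition is the existence of explicit series for $ev_u(E_0^\perp(z))$ and $ev_u(F_0^\perp(z))$ for which the mixed relations hold --- in particular $[E_0^\perp(z),F_0^\perp(w)]$ reproducing $\delta(Cw/z)K_0^{+,\perp}(w)-\delta(Cz/w)K_0^{-,\perp}(z)$ with the correct normalization, and the color-$0$ Serre relations --- and you explicitly defer both the closed formulas and these verifications to \cite{M2}, \cite{FJM1}. Two smaller points deserve care if you carry this out. First, your surjectivity argument only shows the image contains $U_{q,q_3}\widehat{\gl}_2$, which is dense in but not equal to the completion; surjectivity onto $\widetilde{U}_{q,q_3}\widehat{\gl}_2$ has to be understood (and argued) in the appropriate topological sense. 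Second, the assignment of $K_0^{\pm,\perp}(z)$ is not free: it is pinned down by requiring $[E_0^\perp(z),F_0^\perp(w)]$ to close correctly and by compatibility with the $\bar a_{i,j}(r)$ structure constants, so it should be derived rather than merely ``chosen so that'' both Heisenberg families are recovered.
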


Note that if $L$ is an $\widetilde{U}_{q,q_3} \widehat{\gl}_2$-module 
and $L_m$ is the module obtained by
twisting $L$ by the automorphism $\omega^{m}$, then 
the evaluation $\E^\perp $-module of $L_m$ is obtained from the evaluation $\E^\perp$-module of $L$ by twisting by the
automorphism $\omega_1^{m}\omega_0^{-m}$.

We call the subalgebra of $\E^\perp$ generated by $E_{i,0}^\perp,F_{i,0}^\perp$, $i=1,2$, the horizontal  $\widehat{\sln}_2$ subalgebra and denote it by $U^{h,\perp}_q\widehat{\sln_2}$. The horizontal 
$\widehat{\sln}_2$ subalgebra is isomorphic to the quotient of
$U_q\widehat{\sln}_2$ % at level zero.
by the relation $C=1$.

\subsection{Parallel realization of $\E^\perp$}
Algebra $\E^\perp$ has an alternative presentation.
Let $\E$ be the unital associative algebra
generated by  coefficients of the generating series
$$
E_i(z)=\sum_{k\in\Z}E_{i,k}z^{-k}, \quad
F_i(z)=\sum_{k\in\Z}F_{i,k}z^{-k}, \quad
K^{\pm}_i(z)=K_i^{\pm1}\exp\bigl(\pm(q-q^{-1})\sum_{r>0}H_{i,\pm r}z^{\mp r}
\bigr),
$$
where $i\in\Z/2\Z$, and a split central element ${\sf t}$,
which satisfy the following relations. 
\begin{align*}
K^\pm_i(z)K^\pm_j(w) = K^\pm_j(w)K^\pm_i (z), \qquad K^\pm_i(z)K^\mp_j (w) = K^\mp_j(w)K^\pm_i (z),
\\
(-1)^{i+j}g_{i,j}(z,w)K_i^\pm(z)E_j(w)+g_{j,i}(w,z)E_j(w)K_i^\pm(z)=0,
\\
(-1)^{i+j}g_{j,i}(w,z)K_i^\pm(z)F_j(w)+g_{i,j}(z,w)F_j(w)K_i^\pm(z)=0\,,
\end{align*}
\begin{align*}
[E_i(z),F_j(w)]=\frac{\delta_{i,j}}{q-q^{-1}}\delta(z/w)
(K_i^+(w)
-K_i^-(z)),\\
(-1)^{i+j}g_{i,j}(z,w)E_i(z)E_j(w)+g_{j,i}(w,z)E_j(w)E_i(z)=0, \\
(-1)^{i+j}g_{j,i}(w,z)F_i(z)F_j(w)+g_{i,j}(z,w)F_j(w)F_i(z)=0,\\
\on{Sym}_{z_1,z_2,z_3}[E_i(z_1),[E_i(z_2),[E_i(z_3),E_{i+1}(w)]_{q^2}]]_{q^{-2}}=0, \\
\on{Sym}_{z_1,z_2,z_3}[F_i(z_1),[F_i(z_2),[F_i(z_3),F_{i+1}(w)]_{q^2}]]_{q^{-2}}=0.
\end{align*}
There exists a Drinfeld coproduct $\Delta$ in $\E$ given by
\begin{align}
&\Delta(K_i^{\pm}(z))=K_i^{\pm}(z)\otimes K_i^{\pm}(z), \notag \\
&\Delta(E_i(z))=E_i(z)\otimes 1+ K_i^-(z)\otimes E_i(z),\label{coproduct}\\
&\Delta(F_i(z))=F_i(z)\otimes  K_i^+(z)+ 1\otimes F_i^{\pm}(z),\notag 
\end{align}
and $\Delta({\sf t})={\sf t}\otimes {\sf t}$.

There exists a map  $h:\ U_q\widehat{\sln}_2\to \E$ such that 
\begin{align*}
&x^+(z)\mapsto E_1(d^{-1}z)\,,\quad  
x^-(z)\mapsto F_1(d^{-1}z)\,,\quad  
\phi^\pm(z)\mapsto K^{\pm}_1(d^{-1}z)\,, \qquad C\mapsto 1.
\end{align*}
We denote the image of $h$ by $U^{h}_q\widehat{\sln}_2\subset \E$. 
The algebra $U^{h}_q\widehat{\sln}_2$ is isomorphic to the quantum affine algebra $U_{q,1}\widehat{\sln}_2$ % at level zero.
with $C=1$. 

We denote the subalgebra of $\E$ generated by $E_{i,0},F_{i,0}$, $i=1,2$,  by $U^{v}_q\widehat{\sln_2}$. 

Define a $\Z^3$ grading in $\E$ by
\begin{align}
&\deg E_{1,k}=(-k,-k+1,0), \qquad &\deg E_{0,k}=(-k,-k-1,1), \notag\\
&\deg F_{1,k}=(-k,-k-1,0), \qquad &\deg F_{0,k}=(-k,-k+1,-1),\hspace{10pt}\label{grading} \\
&\deg H_{1,r}=(-r,-r,0), \qquad &\deg H_{0,r}=(-r,-r,0),\hspace{30pt} \notag
\end{align}
and $\deg {\sf t}=(0,0,0)$. 

It is easy to check it is indeed a grading. The definition of this grading is chosen to make the isomorphism in Proposition \ref{isom prop} below graded, cf. Lemma 2.4 in \cite{FJMM2}.

The algebras $\E^\perp$ and $\E$ are, in fact, the same.

\begin{prop}[\cite{M1}]\label{isom prop} There exists a $\Z^3$-graded isomorphism of algebras $\theta:\ \E^\perp \to \E$ such that $\theta(C)=K_0K_1$,  $\theta(U^{v,\perp}_q\widehat{\sln_2})=U^{v}_q\widehat{\sln_2}$, $\theta(U^{h,\perp}_q\widehat{\sln_2})=U^{h}_q\widehat{\sln_2}$, and $\theta (q^h)=q^h$, $h\in P$.
\qed
\end{prop}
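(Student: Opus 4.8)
The statement is precisely the Miki-type isomorphism between the two Drinfeld presentations of quantum toroidal $\gl_2$, so the plan is to exhibit $\theta$ on a finite generating set of $\E^\perp$, check that it carries every defining relation of $\E^\perp$ to a relation of $\E$, and then produce a two-sided inverse by the symmetric prescription. The constraint $\theta(C)=K_0K_1$, $\theta(q^h)=q^h$, and the two subalgebra identities are not extra conditions to impose by hand but should fall out of the construction once the images of the generators are fixed.

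First I would reduce to finitely many generators. The vertical subalgebra $U^{v,\perp}_q\widehat{\gl}_2$ already supplies all modes of color $1$ together with the full Cartan current $K_0^{\pm,\perp}(z)$ and the elements $q^h,C$, so only the color-$0$ series $E_0^\perp(z),F_0^\perp(z)$ remain to be produced. Using the $K$--$E$ and $K$--$F$ relations of $\E^\perp$ with $i=1$, $j=0$ — equivalently the commutators $[H_{1,r}^\perp,E_{0,0}^\perp]$ and $[H_{1,r}^\perp,F_{0,0}^\perp]$, whose coefficients are nonzero for generic $q,d$ — one recovers every mode $E_{0,k}^\perp,F_{0,k}^\perp$ from the single pair $E_{0,0}^\perp,F_{0,0}^\perp$ by iterated commutators that shift the homogeneous degree. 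Hence $\E^\perp$ is generated by $U^{v,\perp}_q\widehat{\gl}_2$ together with $E_{0,0}^\perp,F_{0,0}^\perp$, and it suffices to define $\theta$ on this set. On the vertical currents I send $E_1^\perp(z),F_1^\perp(z),K_1^{\pm,\perp}(z)$ to elements generating the zero-mode subalgebra $U^v_q\widehat{\sln}_2\subset\E$, set $\theta(q^h)=q^h$ and $\theta(C)=K_0K_1$, and send $E_{0,0}^\perp,F_{0,0}^\perp$ to elements of $\E$ which, together with the image of the vertical part, generate the horizontal current subalgebra $U^h_q\widehat{\sln}_2$. Comparison of the grading \eqref{grading} with the $(\Z^2\times\Z)$-grading on $\E^\perp$ pins these images down up to scalars and makes $\theta$ graded automatically; this is exactly the point of the remark that the grading on $\E$ was chosen to make the isomorphism graded.

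The main work, and the main obstacle, is verifying that the quadratic $g_{i,j}$-relations, the $\delta$-function $E$--$F$ relation, and the two cubic Serre relations of $\E^\perp$ are carried to their $\E$-counterparts. The essential phenomenon is a trade of central data: the factors of $C$ decorating the $\E^\perp$ relations — the shifted arguments $C^{-(1\pm1)/2}z$ in the $K$--$E$/$K$--$F$ relations, the ratio $g_{i,j}(C^{-1}z,w)/g_{i,j}(Cz,w)$ in the $K$--$K$ relation, and $\delta(Cw/z)$, $\delta(Cz/w)$ in the $E$--$F$ commutator — must reorganize under $\theta$ into the $\E$-relations, in which the Cartan currents commute outright, the $E$--$F$ commutator carries only $\delta(z/w)$, and the role of the central charge is played by $K_0K_1$. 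I would check these relations on the generating set above and then propagate them to all modes via the generation argument of the previous paragraph. The cross relations between the vertical and horizontal affine subalgebras are the delicate ones, since they encode the $SL_2(\Z)$-type rotation exchanging the vertical and horizontal directions; this is precisely the computational heart of \cite{M1}.

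Finally I would define $\theta^{-1}:\E\to\E^\perp$ by the symmetric prescription — the vertical zero modes of $\E$ to the vertical currents of $\E^\perp$, the horizontal currents of $\E$ to the horizontal zero modes of $\E^\perp$, and $K_0K_1\mapsto C$ — verify that it respects the relations in the same way, and check $\theta\theta^{-1}=\id$ and $\theta^{-1}\theta=\id$ on the respective generating sets. Since both maps are graded and mutually inverse, $\theta$ is the desired $\Z^3$-graded isomorphism, and the two subalgebra identities $\theta(U^{v,\perp}_q\widehat{\sln}_2)=U^v_q\widehat{\sln}_2$ and $\theta(U^{h,\perp}_q\widehat{\sln}_2)=U^h_q\widehat{\sln}_2$, together with $\theta(C)=K_0K_1$ and $\theta(q^h)=q^h$, hold by construction. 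As this is exactly Miki's isomorphism, in practice one cites \cite{M1} for the relation-checking, which is the genuinely computational step.
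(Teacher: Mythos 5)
The paper does not prove this proposition at all: it is quoted from Miki's work and the \verb|\qed| follows the statement immediately, so the ``paper's own proof'' is the citation to \cite{M1}. Your proposal is a reasonable reconstruction of how such a proof is organized (reduce to a finite generating set, define $\theta$ on it, verify relations, build the inverse symmetrically), and your generation argument is sound: since $\bar a_{1,0}(r)=-2[r]/r\neq 0$, the commutators $[H^\perp_{1,r},E^\perp_{0,0}]$ and $[H^\perp_{1,r},F^\perp_{0,0}]$ do produce all modes $E^\perp_{0,k},F^\perp_{0,k}$, so $\E^\perp$ is indeed generated by $U^{v,\perp}_q\widehat{\gl}_2$ together with $E^\perp_{0,0},F^\perp_{0,0}$. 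To that extent you and the paper end up in the same place: the genuinely computational content is delegated to \cite{M1}.

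As a standalone proof, however, there is a concrete gap. You never exhibit $\theta(E^\perp_{0,0})$ and $\theta(F^\perp_{0,0})$ (nor $\theta(E^\perp_{1,k})$ for $k\neq 0$), and your claim that ``comparison of the grading pins these images down up to scalars'' is false: the graded components of $\E$ are infinite-dimensional (for instance $E_{0,0}$ and $H_{1,1}E_{0,-1}$ lie in the same $\Z^3$-degree by \eqref{grading}), so degree considerations cannot single out the images. In Miki's construction the images are specific elements produced by an $SL_2(\Z)$-type (braid group) action, and every relation check depends on those explicit formulas; without them neither the verification of the cross relations nor the construction of $\theta^{-1}$ can be carried out. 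So the proposal is an accurate roadmap, but the step you identify as ``the computational heart'' is not optional bookkeeping --- it is the entire proof, and it is exactly what both you and the authors are outsourcing to \cite{M1}.
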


The commutative currents $K_i(z)$ are called Cartan currents.

\medskip 

For $a\in\C^\times$, the shift of spectral parameter automorphism $\hat\tau_a:\ \E\to \E$ maps 
\begin{align}\label{toroidal shift}
E_i(z)\mapsto E_i(z/a), \qquad 
F_i(z)\mapsto F_i(z/a), \qquad 
K_i^\pm(z)\mapsto K_i^\pm(z/a),
\end{align}
and $\sf t  \to \sf t$.  The automorphism 
$\hat\tau_a$ preserves the degree. Note that the automorphism $\hat \tau_a$ preserves
subalgebras $U_q^h\widehat{\sln_2}$ and $U_q^v\widehat{\sln_2}$. The restriction of $\hat \tau_a$ to $U_q^h\widehat{\sln_2}$ is the automorphism $\tau_a$, see \eqref{shift}, and the restriction of $\hat \tau_a$ to $U_q^{v}\widehat{\sln_2}$ is the identity map.

\medskip

Let $V$ be an $\E$-module. 

For $\kappa \in\C^{\times}$, we
say $V$ has level $\kappa$ if the central element $K_0K_1$ acts in $V$ 
as the scalar $\kappa$.

A vector $v\in V$ is called a vector of $\ell$-weight  $\phi(z)=(\phi^0(z),\phi^1(z))$
 if $K_i^\pm(z)\,v=\phi^i(z)\, v$, $i=0,1$.

A module $V$ is called a highest $\ell$-weight module of highest $\ell$-weight $\phi(z)$ if it is generated by a vector $v$ of $\ell$-weight 
$\phi(z)$ 
satisfying $E_i(z)\,v=0$, $i=0,1$.

\begin{prop}{\cite{FJM1}}
Let  $L_{a,b}$ be
the irreducible highest weight
$U_q\widehat{\gl_2}$-module of highest weight $a,b$, that is generated by 
a non-zero vector $v$ such that
$K_0\,v=av$, $K_1\,v=bv$, $E_{i,0}\,v=0$, $i=0,1$. Set $q_3=ab$. Then
the corresponding evaluation $\E$-module
has highest $\ell$-weight 
\beq\label{evaluation weight}
\phi(z)
=\big(\frac{az-u}{z-au},\  \frac{bz-u}{z-bu}\big).  
\eeq
Here the parameter $u$ depends on the choice of the evaluation map and can be made arbitrary by twisting by automorphism \eqref{toroidal shift}. \qed
\end{prop}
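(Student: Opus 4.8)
The plan is to exhibit the cyclic generator $v$ of $L_{a,b}$ as a highest $\ell$-weight vector of the evaluation $\E$-module and to read off its $\ell$-weight from the two Cartan currents.

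First I would make the $\E$-action explicit. By Proposition \ref{isom prop} the algebra $\E$ acts on $L_{a,b}$ through $ev_u\circ\theta^{-1}$. Since $\theta$ carries $U^{v,\perp}_q\widehat{\sln}_2=v^\perp(U_q\widehat{\sln}_2)$ onto $U^{v}_q\widehat{\sln}_2$, the map $\theta^{-1}$ sends the vertical subalgebra $U^{v}_q\widehat{\sln}_2\subset\E$ back to the image of $v^\perp$, on which $ev_u$ is inverse to $v^\perp$. Hence the vertical $U^{v}_q\widehat{\sln}_2\subset\E$, extended by the Cartan zero modes $K_0,K_1$, acts by the original $\widehat{\gl}_2$-structure of $L_{a,b}$, and $v$ is its Kac--Moody highest weight vector: $E_{i,0}v=0$ for $i=0,1$, together with $K_0v=av$ and $K_1v=bv$. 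In particular $K_0K_1=\theta(C)$ acts by $ab=q_3$, which is the level, so the choice $q_3=ab$ is forced.

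Next I would show $E_1(z)v=0$ and compute $\phi^1$. Tracing $ev_u\circ\theta^{-1}$ on the horizontal subalgebra $U^{h}_q\widehat{\sln}_2=\theta(U^{h,\perp}_q\widehat{\sln}_2)$ and using that $ev_u$ sends the Chevalley generators $E^\perp_{i,0},F^\perp_{i,0}$ to elements of homogeneous degree $0$, one sees that the horizontal $\widehat{\sln}_2$ acts on $L_{a,b}$ through the $\sln_2$-evaluation homomorphism at parameter $u$ into the finite $U_q\gl_2$. Under this action $E_1(z)$ is a scalar series times the finite raising generator $E_{1,0}$, so $E_1(z)v=0$; and the Cartan current $K_1^{\pm}(z)=h(\phi^\pm(dz))$ acts on the finite highest weight vector $v$, with $K_1v=bv$, by the $\sln_2$-evaluation $\ell$-weight. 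Expanding the rational function $\tfrac{bz-u}{z-bu}$ at $z=\infty$ and at $z=0$ reproduces the $K_1^{+}(z)$- and $K_1^{-}(z)$-series respectively, giving $\phi^1(z)=\tfrac{bz-u}{z-bu}$.

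It remains to treat the color $0$, and this is the step I expect to be the main obstacle. Unlike $E_1(z)$, the current $E_0(z)$ does not lie in the horizontal $\widehat{\sln}_2$: only its zero mode $E_{0,0}$ is an affine Chevalley generator, which kills $v$ by hypothesis, while its remaining modes are genuinely toroidal, and since $ev_u$ breaks the $\Z^3$-grading the vanishing $E_0(z)v=0$ is not a weight count but must be extracted from the explicit evaluation map. Granting $E_0(z)v=0$, I would obtain $\phi^0$ by applying $[E_0(z),F_0(w)]=\tfrac{1}{q-q^{-1}}\delta(z/w)(K_0^{+}(w)-K_0^{-}(z))$ to $v$, which reduces $\phi^{0,\pm}$ to the computable quantity $E_0(z)F_0(w)v$; with $K_0v=av$ this yields $\phi^0(z)=\tfrac{az-u}{z-au}$ (alternatively one may invoke the invariance of $\E$ under exchanging the two colors together with $q_1\leftrightarrow q_3$). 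Finally, the parameter $u$ is exactly the spectral-shift freedom: by \eqref{toroidal shift} the automorphism $\hat\tau_a$ sends $K_i^{\pm}(z)\mapsto K_i^{\pm}(z/a)$, i.e.\ $u\mapsto au$, so it suffices to compute for one convenient value of $u$ and to recover the general formula by applying $\hat\tau_a$.
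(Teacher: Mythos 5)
First, a point of reference: the paper does not prove this proposition at all --- it is imported from \cite{FJM1} and closed immediately, so there is no internal argument to measure yours against, and your sketch has to stand on its own. Its skeleton is right: the action is $ev_u\circ\theta^{-1}$, the vertical subalgebra of $\E$ recovers the original $U_q\widehat{\gl}_2$-structure (whence $K_0K_1$ acts by $ab$, forcing $q_3=ab$), and the parameter $u$ is pure spectral-shift freedom via \eqref{toroidal shift}. The problem is that the step you present as routine --- color $1$ --- already contains the essential gap. Your derivation of $\phi^1$ and of $E_1(z)v=0$ rests on the assertion that the horizontal subalgebra acts on $L_{a,b}$ ``through the $\sln_2$-evaluation homomorphism at parameter $u$ into the finite $U_q\gl_2$''. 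That is not what $ev_u$ does: while $ev_u(E^\perp_{1,0})=x^+_0$ is a finite Chevalley generator, the color-$0$ horizontal generators $E^\perp_{0,0},F^\perp_{0,0}$ are sent to infinite series in the completion $\widetilde{U}_{q,q_3}\widehat{\gl}_2$ built from the Heisenberg generators $Z_r$ (this is precisely why the paper introduces the $Z_r$ and why the target of $ev_u$ is a completion), so the horizontal action does not factor through the finite subalgebra. Consequently neither $E_1(z)v=0$ nor $\phi^1(z)=\frac{bz-u}{z-bu}$ follows from finite-dimensional evaluation-module theory; both require either the explicit formulas for $ev_u$ --- which is exactly the computation carried out in \cite{FJM1} --- or at minimum a grading argument showing that $ev_u(\theta^{-1}(E_{1,k}))$ raises the $\widehat{\gl}_2$-weight without lowering the homogeneous degree, so that it kills the extremal vector $v$.

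The color-$0$ half, which you honestly defer, has a further problem in the fallback you propose. Applying $[E_0(z),F_0(w)]=\frac{1}{q-q^{-1}}\delta(z/w)(K_0^+(w)-K_0^-(z))$ to $v$ reduces $\phi^0$ to the quantity $E_0(z)F_0(w)v$, but evaluating that product requires knowing how $E_0(z)$ acts on the non-extremal vector $F_0(w)v$, i.e.\ the very evaluation-map formulas you are trying to avoid; the reduction is circular. The color-exchange symmetry you mention in passing (swapping $0\leftrightarrow 1$ together with $q_1\leftrightarrow q_3$ and $a\leftrightarrow b$) is the more promising shortcut, but it needs a verification that $ev_u$ is equivariant for this symmetry, which is not stated in the paper. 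In short: the framework and the shape of the answer are correct, but every quantitative claim in \eqref{evaluation weight} ultimately passes through the explicit form of $ev_u$ on the color-$0$ generators, which neither the paper nor your sketch supplies.
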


In all modules considered in this paper, the series $\phi_i(z)$ will be rational functions regular at zero and infinity, satisfying $\phi_i(0)\phi_i(\infty)=1$, cf. Theorem 2.3 in \cite{FJMM1}. We call such rational functions balanced. The set of all  pairs of rational functions $\phi(z)$
with balanced coordinates is naturally a group under coordinate-wise multiplication.

A module $V$ is called tame if 
$V$ has a basis of $\ell$-weight vectors with distinct $\ell$-weights.

\subsection{An explicit construction of tame $\E$-modules} \label{sec:combinatorial}
If a module over quantum affine algebra
is tame then the matrix coefficients of $E_i(z)$ and $F_i(z)$ are delta functions multiplied by some constants. We give an abstract construction which under some assumption defines the action in a completely combinatorial way.

A box $\Box$ is a set which has two elements: color of the box 
$c(\Box)\in\{0,1\}$ and coordinate
of the box 
$(x(\Box),y(\Box),z(\Box))\in\C^3$. We often ignore the color and simply write $\Box=(x,y,z)$.

The position of the box is then given by $p(\Box)=q_1^{-x(\Box)}q_2^{-y(\Box)}q_3^{-z(\Box)}$. We assume that the set of all boxes is totally ordered. 

We do not allow boxes with the same positions and different colors.

If all three coordinates of two boxes differ by integers then we assume that the ordering is lexicographic according to $y>z>x$. Namely, two
such boxes are ordered as $\Box_1>\Box_2$ 
if and only if  $y(\Box_1)>y(\Box_2)$ or if $y(\Box_1)=y(\Box_2)$ and $z(\Box_1)>z(\Box_2)$, or if 
$y(\Box_1)=y(\Box_2)$ and $z(\Box_1)=z(\Box_2)$ and $x(\Box_1)>x(\Box_2)$.  One can take any ordering satisfying this assumption.

If the $x$ and $z$ coordinates of two boxes differ by integers then we assume $c(\Box_1)-c(\Box_2)\equiv x(\Box_1)-x(\Box_2)+z(\Box_1)-z(\Box_2)$ modulo $2$. In other words, colors alternate in $x$- and $z$-directions and do not change in $y$-direction. 

The set of all boxes is divided into
two disjoint subsets of positive and negative boxes.

A state $\la=\la^+\sqcup \la^-$ is a disjoint union 
of two finite sets of
boxes without repetitions
such that all boxes in $\la^+$ are positive, and all boxes in $\la^-$ are negative.

The state $\emptyset$ is called the reference state. 
We think of a state $\la$ as having
positive boxes in $\la^+$ added to the reference state and negative boxes in $\la^-$  removed from the reference state. So, one can think that the reference state consists of all negative boxes.

Let $S$ be a collection of states including the reference state, and let $\Psi(z)=(\Psi^0(z),\Psi^1(z))$ be a pair of balanced rational functions. This is our data for constructing an $\E$-module
under
the assumption to be stated below.

For $\la\in S$, a box $\Box$ is called convex if either $\Box\in \la^+$ and the removal of this box produces a state,
$((\la^+\setminus \Box)\sqcup \la^-) \in S$, or 
$\Box\not \in \la^-$ is a negative box and the addition of this box to $\la^-$   produces a state,
$(\la^+\sqcup (\la^-\sqcup\Box)) \in S$. In both cases we say that the new state is obtained from $\la$ by removing the box $\Box$ and we denote it by $\la\setminus \Box$.
We denote the set of all convex boxes of color $i$ for a state $\la$ by $CV_i(\la)$. 

For $\la\in S$, a box $\Box$ is called concave if either $\Box\not\in \la^+$ is a positive box and the addition
of this box to $\la^+$ produces a state,
$((\la^+\sqcup \Box)\sqcup \la^-) \in S$, or 
$\Box \in \la^-$ and the removal of this box produces a state,
$(\la^+\sqcup (\la^-\setminus \Box)) \in S$. In both cases we say that the new state is obtained from $\la$ by adding the box $\Box$ and we denote it by $\la\sqcup \Box$.
We denote the set of all concave boxes of color $i$ for a state $\la$ by $CC_i(\la)$.

A set 
$S$ of states
is connected if any non-empty state 
$\la$ can be connected to the reference state by a sequence of removal of boxes.

Next we define the $\ell$-weight of a state.
For $a\in\C^\times$, introduce the notation for pairs of balanced rational functions
\begin{equation*}
 0_a=(\frac{qz-a}{z-aq},\ 1), \qquad 
 1_a=(1,\ \frac{qz-a}{z-aq}).
\end{equation*}
These are quantum affine
analogs of $\widehat{\sln}_2$ fundamental weights. The 
quantum  affine 
analogs of $\widehat{\sln}_2$  roots are given by 
\begin{align*}
A_{0,a}(z)&= \Big(
-\frac{g_{0,0}(a,z)}{g_{0,0}(z,a)}, \frac{g_{0,1}(a,z)}{g_{1,0}(z,a)}
\Big)=\Big(\frac{q_2 z-a}{z-q_2 a},   \frac{(q_1 z-a)(q_3 z-a)}{(z-q_1 a)(z-q_3 a)}\Big)=0_{q^{-1}a}  0_{qa} 1^{-1}_{q_1 qa} 1^{-1}_{q_3 qa}, \\
A_{1,a}(z)&= \Big(\frac{g_{1,0}(a,z)}{g_{0,1}(z,a)}, -\frac{g_{1,1}(a,z)}{g_{1,1}(z,a)}
\Big)=
\Big(\frac{(q_1 z-a)(q_3 z-a)}{(z-q_1 a)(z-q_3 a)},
\frac{q_2 z-a}  {z-q_2 a}
\Big)
=1_{q^{-1}a}  1_{qa} 0^{-1}_{q_1 qa} 0^{-1}_{q_3 qa}.
\end{align*}

Note the complete symmetries between $0 \leftrightarrow 1$ and between $q_1\leftrightarrow q_3$.

Note the identities
\begin{align}\label{A symmetries)}
A_{i,a}(b)=A_{i,1}(b/a), \qquad A_{i,a}(b)= A_{i,b}^{-1}(a), \qquad A_{0,a}(a)=(-1,1),\qquad A_{1,a}(a)=(1,-1).
\end{align}

We define the  $\ell$-weight of 
a box $\Box$ to be $A^{-1}_{c(\Box), p(\Box)}(z)$.

We define the $\ell$-weight of the reference state to be $\Psi(z)$,  that is $\phi_\emptyset(z)=\Psi(z)$.

We define the $\ell$-weight of a state $\la\in S$ by 
\be
\phi_\la(z)=(\phi^0_\la(z),\phi^1_\la(z))=
\Psi(z) \prod_{\Box\in\la^+} A^{-1}_{c(\Box), p(\Box)}(z) \prod_{\Box\in\la^-} A_{c(\Box),  p(\Box)}(z).
\ee 

The following lemma is a trivial but important consequence of the definition o $\phi_\la(z)$.
\begin{lem}\label{trivial lem} Let  $\la\in S$ be a state and let $\Box$ be a box.  Then 
\begin{align*}
&\phi_{\la\sqcup \Box}(z)= A^{-1}_{c(\Box),p(\Box)}(z) \phi_{\la}(z)
\quad \text{if $\Box$ is concave for $\lambda$},
\\ 
&\phi_{\la\setminus \Box}(z)= A_{c(\Box),p(\Box)}(z) \phi_{\la}(z)
\quad\, \text{if $\Box$ is\    convex\   \   for $\lambda$}\,.
\end{align*}
\qed
\end{lem}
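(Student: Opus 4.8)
The statement follows directly from the multiplicative definition of $\phi_\la(z)$, so the plan is simply to track how the defining product changes under a single box move. Recall that
\[
\phi_\la(z)=\Psi(z)\prod_{\Box'\in\la^+}A^{-1}_{c(\Box'),p(\Box')}(z)\prod_{\Box'\in\la^-}A_{c(\Box'),p(\Box')}(z),
\]
so each positive box contributes a factor $A^{-1}$ and each negative box a factor $A$. The point to exploit is that the operations ``add a box'' and ``remove a box'', in the sense of the definitions of concave and convex, each comprise two disjoint subcases, and the plan is to check that both subcases yield the \emph{same} multiplicative correction.

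For the concave case I would split according to the definition: either $\Box$ is a positive box with $\Box\notin\la^+$ and $\la\sqcup\Box$ enlarges $\la^+$ to $\la^+\sqcup\Box$, or $\Box\in\la^-$ and $\la\sqcup\Box$ shrinks $\la^-$ to $\la^-\setminus\Box$. In the first subcase the product over the positive boxes acquires one extra factor $A^{-1}_{c(\Box),p(\Box)}(z)$; in the second subcase the product over the negative boxes loses a factor $A_{c(\Box),p(\Box)}(z)$, which amounts to multiplying by $A^{-1}_{c(\Box),p(\Box)}(z)$. Hence in both subcases $\phi_{\la\sqcup\Box}(z)=A^{-1}_{c(\Box),p(\Box)}(z)\,\phi_\la(z)$.

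The convex case is entirely parallel: either $\Box\in\la^+$ and $\la\setminus\Box$ replaces $\la^+$ by $\la^+\setminus\Box$, removing the factor $A^{-1}_{c(\Box),p(\Box)}(z)$ and hence multiplying by $A_{c(\Box),p(\Box)}(z)$, or $\Box\notin\la^-$ is negative and $\la\setminus\Box$ enlarges $\la^-$ to $\la^-\sqcup\Box$, inserting a factor $A_{c(\Box),p(\Box)}(z)$. Either way one obtains $\phi_{\la\setminus\Box}(z)=A_{c(\Box),p(\Box)}(z)\,\phi_\la(z)$, as claimed.

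There is essentially no obstacle here; the only thing requiring care is the bookkeeping that the two subcases in each direction produce identical factors, which hinges on the sign asymmetry between how positive boxes (entering via $A^{-1}$) and negative boxes (entering via $A$) appear in the defining product. Since each state in $S$ is a finite set of boxes without repetitions and the concavity or convexity hypothesis guarantees that $\la\sqcup\Box$, respectively $\la\setminus\Box$, again lies in $S$, the products involved are finite and the $\ell$-weights on both sides are defined, so no well-definedness issues arise.
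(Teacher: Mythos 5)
Your proof is correct and follows the only natural route: the paper itself states this lemma as a trivial consequence of the multiplicative definition of $\phi_\la(z)$ and gives no proof, and your case analysis (positive box entering $\la^+$ versus negative box leaving $\la^-$, and the parallel convex cases) is precisely the bookkeeping the authors leave to the reader.
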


Given a box $\Box$ we break the $\ell$-weight of a state into a product  by collecting boxes before and after  $\Box$:
\begin{align*}
\phi_{\la,<\Box}(z)&=\Psi (z)\prod_{\Box_1\in\la^+, \,\Box_1<\Box } A^{-1}_{c(\Box_1), p(\Box_1)}(z) \prod_{\Box_1\in\la^-, \,\Box_1<\Box } A_{c(\Box_1), p(\Box_1)}(z), \\
\phi_{\la,>\Box}(z)&=\prod_{\Box_1\in\la^+, \,\Box_1>\Box } A^{-1}_{c(\Box_1), p(\Box_1)}(z) \prod_{\Box_1\in\la^-, \,\Box_1>\Box } A_{c(\Box_1), p(\Box_1)}(z).
\end{align*}

We have 
\begin{align}\label{phi lambda|}
\phi_\la(z)
=\begin{cases} 
\phi_{\la,<\Box}(z) \,\phi_{\la,>\Box}(z), & {\rm if }\  \Box\not \in\la, \\
\phi_{\la,<\Box}(z) \,\phi_{\la,>\Box}(z)\,
A^{-1}_{c(\Box),p(\Box)}(z)\,,
& {\rm if }\ \Box\in \la^+,\\
\phi_{\la,<\Box}(z) \,\phi_{\la,>\Box}(z)\,
A_{c(\Box),p(\Box)}(z)\,,
& {\rm if }\ 
\Box\in \la^-.
\end{cases}
\end{align}
\medskip 

We make the following assumptions: 
\begin{itemize}
    \item[(A1)] The $\ell$-weights of all states are distinct.
    \item[(A2)] For each $\la\in S$ the components $\phi_\la^i(z)$ of $\ell$-weight $\phi(\la)$ are balanced rational functions with only simple poles.
    \item[(A3)] For $i=0,1$, the map $\Box\mapsto p(\Box)$ gives a bijection from $CC_i(\la)\sqcup CV_i(\la)$ to the set of poles of $\phi^i_{\la}(z)$.
    \item[(A4)] Let $\Box\in CC_i(\la)\sqcup CV_i(\la)$.  If $\Box$ is positive, then $\phi^i_{\la,>\Box}(z)$ is regular at $z=p(\Box)$. If $\Box$ is a negative, then $\phi^i_{\la,<\Box}(z)$ is regular at $z=p(\Box)$. 
    \item[(A5)] Let a box $(x,y,z)$ be concave in $\la$. If a box with coordinates $(x',y',z')$ is concave in $\la\sqcup (x,y,z)$ but not in  $\la$ then $(x',y',z')\in\{(x,y,z+1), (x,y+1,z),(x+1,y,z)\}$. If a box with coordinates $(x',y',z')$ is convex in $\la$ but not in  $\la\sqcup (x,y,z)$ then $(x',y',z')\in\{(x,y,z), (x,y,z-1), (x,y-1,z),(x-1,y,z)\}$.
\end{itemize}
\medskip

In particular, the assumptions (A2), (A3) imply that for each state $\lambda\in S$, 
the sets of convex and concave boxes $CV_i(\la)$, $CC_i(\la)$
are finite.  By (A2) and (A4), the positions of all these boxes are distinct.

Using assumptions (A3),  (A4), one can also deduce our previous assumption on coloring but we do not pursue this point.

Note that boxes $(x,y,z)$ and $(x+t,y+t,z+t)$ have the same position and color. For all our purposes, such boxes are indistinguishable. Assumption (A5) takes away this freedom and ensures that the boxes in a state are aligned together. For the proofs below (A5) could be omitted.

Now, we define the $\E$-module $V(S,\Psi)$.

Let $V=\oplus_{\la\in S}\C \ket{\la}$ 
be the $\C$-vector space with a fixed basis labeled by states in $S$.

Set 
\begin{align}
&K_i^\pm(z)\ket{\la} = \phi^i_\la(z)\,\ket{\la}, \notag\\
&F_i(z)\ket{\la}= \sum_{\Box\in CC_i(\la)} a_{\la,\Box}^i\, \delta(z/p(\Box))\, \ket{\la\sqcup \Box},\label{action}
\\
&E_i(z)\ket{\la}= \sum_{\Box\in CV_i(\la)} b_{\la,\Box}^i\,\delta(z/p(\Box)) \,\ket{\la\setminus \Box},\notag
\end{align}
where the coefficients
are given by
\begin{align}  
a_{\la,\Box}^i&= \begin{cases} \phi_{\lambda,>\Box}^i(p(\Box)), 
& {\rm if\ }\Box {\rm \ is\  positive,}\\
\frac{-1}{q-q^{-1}}\Res_{z=p(\Box)}
\phi_{\lambda,>\Box}^i(z)\frac{dz}{z}, 
& {\rm if\ }\Box {\rm \ is\  negative,}
\end{cases}\label{constants 1}
\\
b_{\la,\Box}^i&= \begin{cases} \frac{1}{q-q^{-1}}
\Res_{z=p(\Box)} 
\phi_{\lambda,< \Box}^i(z)\frac{dz}{z}, 
& {\rm if\ }\Box {\rm \ is\  positive},\\
\phi_{\lambda,< \Box}^i(p(\Box)), 
& {\rm if\ }\Box {\rm \ is\  negative}.
\end{cases}\label{constants 2}
\end{align}
The element ${\sf t}$ can be set to an arbitrary non-zero number. For brevity, we set ${\sf t}=1$.

Note that coefficients $a_{\la,\Box}$ and $b_{\la,\Box}$ are well defined by (A4) and non-zero by (A3), (A2). Note that by (A2), the $\phi^i_\la(z)$, $i=0,1$, are balanced, so $K_i=(K_i^-(0))^{-1}=K_i^+(\infty)$ are well-defined. 

The statement that in 
tame modules the matrix 
coefficients of $F_i(z),E_i(z)$
must be delta functions at poles of $\ell$-weights is an easy general fact. 
However, in general it is not clear how to fix the 
coefficients,
see Section 5.3 in \cite{FJM2}. 
Here we regard our module as some kind of tensor product, 
therefore the choice of coefficients
is dictated by the coproduct \eqref{coproduct}.
Now we check that it works.

\begin{thm}\label{main thm}
Let $S$ be a connected set of states and $\Psi(z)$ 
a pair of balanced functions such that the assumptions (A1)--(A5) hold. 
Then formulas \eqref{action} make 
$V=\oplus_{\la\in S}\C \ket{\la}$ 
into a tame irreducible $\E$-module. 
%provided the Serre relations hold on the reference vector $\ket{\emptyset}$.
\end{thm}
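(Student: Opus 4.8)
The plan is to verify directly that the operators $K_i^\pm(z)$, $E_i(z)$, $F_i(z)$ defined by \eqref{action}–\eqref{constants 2} satisfy all the defining relations of $\E$, and then to establish tameness and irreducibility. Tameness is immediate from assumption (A1): the $K_i^\pm(z)$ act diagonally in the basis $\{\ket{\la}\}$ with eigenvalues $\phi^i_\la(z)$, and these are pairwise distinct by (A1). The Cartan-type relations $K^\pm_i(z)K^\pm_j(w)=K^\pm_j(w)K^\pm_i(z)$ and $K^\pm_i(z)K^\mp_j(w)=K^\mp_j(w)K^\pm_i(z)$ hold trivially since all $K$'s are simultaneously diagonal. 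So the genuine content is in the mixed $K$–$E$, $K$–$F$ relations, the $[E_i,F_j]$ relation, and the quadratic ($EE$, $FF$) and Serre relations.

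First I would handle the relations involving one Cartan current. Applying $E_j(w)$ then $K_i^\pm(z)$ to $\ket{\la}$ and comparing with the opposite order, the delta function $\delta(w/p(\Box))$ forces $w=p(\Box)$, so the $K$–$E$ relation reduces — via Lemma \ref{trivial lem}, which records exactly how $\phi_\la$ and $\phi_{\la\setminus\Box}$ differ by a factor of $A_{c(\Box),p(\Box)}$ — to a pointwise identity among the rational functions $g_{i,j}$ evaluated using the definitions of $A_{i,a}$ and the position $p(\Box)=q_1^{-x}q_2^{-y}q_3^{-z}$. The symmetries \eqref{A symmetries)}, in particular $A_{i,a}(a)=(\mp1)$-type normalizations, are what make the residual scalar match. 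The $K$–$F$ relation is analogous. The crucial and most delicate relation is $[E_i(z),F_j(w)]=\frac{\delta_{i,j}}{q-q^{-1}}\delta(z/w)(K_i^+(w)-K_i^-(z))$. For $i\ne j$ the two sides act on disjoint colors and vanish. For $i=j$ one computes $E_i(z)F_i(w)\ket{\la}$ and $F_i(w)E_i(z)\ket{\la}$: a box $\Box$ may be added then removed (or vice versa), and there are diagonal terms where $\Box\in CC_i(\la)$, $\Box\in CV_i(\la)$ simultaneously is impossible but where adding and then deleting the \emph{same} box returns to $\ket\la$. The delta functions collapse everything to $z=w=p(\Box)$, and the key computation is that the product $a^i_{\la,\Box}\,b^i_{\la\sqcup\Box,\Box}$ minus the reversed product equals the residue of $\phi^i_\la$ at $p(\Box)$, which by (A3) accounts for exactly one pole of $K_i^+(w)-K_i^-(z)$. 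Here the split \eqref{phi lambda|} of $\phi_\la$ into $\phi_{\la,<\Box}\phi_{\la,>\Box}$, together with the regularity guaranteed by (A4), is precisely what makes the coproduct-inspired coefficients \eqref{constants 1}, \eqref{constants 2} telescope correctly.

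The quadratic $EE$ and $FF$ relations are where assumption (A5) enters. Applying $E_i(z)E_j(w)$ to $\ket\la$ involves adding two boxes $\Box_1,\Box_2$ in both orders; the relation $(-1)^{i+j}g_{i,j}(z,w)E_iE_j+g_{j,i}(w,z)E_jE_i=0$ becomes, after the delta functions localize $z,w$ to the positions, an identity of the form $(-1)^{i+j}g_{i,j}(p(\Box_1),p(\Box_2))\,c_1+g_{j,i}(p(\Box_2),p(\Box_1))\,c_2=0$ relating the two products of coefficients. The factors $g_{i,j}$ vanish exactly when the two boxes are adjacent in the appropriate direction, and (A5) guarantees that the only newly-created concave/convex boxes are the geometric neighbors $(x,y,z+1),(x,y+1,z),(x+1,y,z)$ and their convex counterparts — so the vanishing of $g_{i,j}$ matches the combinatorics of which double-additions are actually allowed. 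I expect \textbf{this to be the main obstacle}: one must check case by case that the coefficient ratio $c_1/c_2$ equals $-g_{j,i}(p(\Box_2),p(\Box_1))/g_{i,j}(p(\Box_1),p(\Box_2))$, using \eqref{constants 1}–\eqref{constants 2} and the factorized form of $A_{i,a}$, and that whenever this ratio would be singular the corresponding term is combinatorially absent by (A5). The Serre relations then follow from the same localization plus the fact that four boxes of color $i$ cannot stack in the constrained geometry, making the symmetrized sum vanish termwise.

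Finally, irreducibility follows from connectedness together with (A3). Given any nonzero $v\in V$, since the $K_i^\pm(z)$ are diagonal with distinct eigenvalues (A1), the submodule generated by $v$ contains some basis vector $\ket\la$. By (A3) every convex box of $\la$ is a genuine pole of $\phi^i_\la$, so the corresponding $E_i(z)$-coefficient $b^i_{\la,\Box}$ is nonzero; hence we may remove boxes one at a time, and connectedness lets us reach $\ket\emptyset$. Symmetrically the nonzero coefficients $a^i_{\la,\Box}$ let us rebuild any $\ket\mu$ from $\ket\emptyset$, so the submodule is all of $V$. The nonvanishing of all $a,b$ noted after \eqref{constants 2} is exactly what powers this argument.
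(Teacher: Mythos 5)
Your overall strategy --- direct verification of all defining relations, tameness from (A1), and irreducibility from connectedness plus the nonvanishing of the coefficients $a^i_{\la,\Box}$, $b^i_{\la,\Box}$ --- is the same as the paper's, and your treatment of the Cartan relations, the $K$--$E$ and $K$--$F$ relations, the diagonal part of $[E_i(z),F_i(w)]$, and the quadratic $EE$/$FF$ relations matches the actual argument. But two of your steps, as stated, would fail. For $i\ne j$ you claim that $E_i(z)F_j(w)$ and $F_j(w)E_i(z)$ ``act on disjoint colors and vanish.'' They do not vanish: $E_0(w)F_1(z)\ket{\la}$ adds a concave box of color $1$ and removes a convex box of color $0$, and is generically nonzero. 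What must be shown is that the two compositions have \emph{equal} matrix elements, which in the paper reduces to $(A^{-1}_{1,p_1}(p_0))_0\,(A^{-1}_{0,p_0}(p_1))_0=1$, i.e.\ the symmetry $A_{i,a}(b)=A_{i,b}^{-1}(a)$ from \eqref{A symmetries)}, plus a check that in the degenerate cases $p_0/p_1\in\{q_1,q_3\}$ both orderings annihilate the state simultaneously.

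More seriously, the Serre relations do not ``vanish termwise,'' and the claim that ``four boxes of color $i$ cannot stack'' misreads the relation: it involves three boxes of color $i$ and one box of color $i+1$, and in the generic situation all four are simultaneously concave, so all $6\times 24$ terms of the symmetrized expression are present and nonzero. The cancellation is a genuinely nontrivial rational-function identity, \eqref{Serre identity}, in the quantities $h(z_1,z_2)=(A_{0,z_1}(z_2))_0$ and $k(z,w)=(A_{0,z}(w))_1$; the degenerate configurations --- where some boxes become concave only after others are added, which is where (A5) actually enters --- are then handled as limits of that identity by tracking which terms acquire poles and zeros. This is the main technical content of the paper's proof and is absent from your argument; without it the verification of the Serre relations, and hence that $V$ is an $\E$-module at all, is incomplete.
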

We prove Theorem \ref{main thm} in the Subsection \ref{proof subsection}.

We denote by $V(S,\Psi)$ the $\E$-module $V$
constructed by the use of Theorem \ref{main thm} 
and call it a combinatorial module. 

\medskip

Define the degree of a box of color $0$ to be $(1,0)$ and the degree of a box of color $1$ to be $(0,1)$. The degree of a collection of boxes is the sum of degrees of the boxes.
For a state $\la=\lambda^+\sqcup\lambda^-\in S$, define the two component degree of $\la$ by
\be
\deg(\la)=\deg(\la^+)-\deg(\la^-)=(\deg_0(\la),\deg_1(\la)).
\ee
Note that the degree is normalized by the condition that the degree of the reference state is $(0,0)$. 

Define the character of a combinatorial module $V(S,\Psi)$ as a generating series of degrees of the states:
\be
\chi_{V(S,\Psi)}(z_0,z_1)=\sum_{\la\in S} z_0^{\deg_0(\la)}z_1^{\deg_1(\la)}.
\ee 

\medskip

The results of this section, including Theorem \ref{main thm}, also hold for quantum toroidal algebras of type
$\gl_n$ with arbitrary $n$ with obvious changes in the coloring.

\subsection{The proof of Theorem \ref{main thm}}\label{proof subsection}
 All the relations are checked directly.  We give some details.

Since $K_i^\pm (z)$ are diagonal, $K_i, H_{i,r}$ all commute. 

\medskip

Consider the $K_0F_0$ relation acting on a state $\la$.  Operators $K_0(z)F_0(w)$,  $F_0(w)K_0(z)$ can add a positive box of color $0$ or remove a negative box of color $0$. Let $\Box$ be a concave box of $\la$ of color $0$ with position $p$. Then the corresponding matrix coefficient is 
\begin{align*}
\bra{\la\sqcup \Box}&\Big((w-q_2z) K_0(z)F_0(w)-(z-q_2w)F_0(w)K_0(z)\Big)\ket{\la}\\ &=a_{\la,\Box}^0\,\delta(w/p)\big((w-q_2z)\phi^0_{\la\sqcup\Box}(z)-(z-q_2w)\phi^0_\la(z)\big),
\end{align*}
This matrix coefficient is zero since
\begin{align*}
 \phi^0_{\la\sqcup\Box}(z)=\phi^0_{\la}(z) (A^{-1}_{0,p}(z))_0=  \phi^0_{\la}(z)\frac{z-q_2p}{q_2z-p}.
\end{align*}
The other cases of $KF$ and $KE$ relations are similar.

\medskip

Consider the $F_0F_1$ relation acting on a state $\la$. Operators $F_0(w)F_1(z)$,  $F_1(z)F_0(w)$ can add two positive boxes of colors $0$ and $1$ or remove two negative boxed of colors $0$ and $1$ or remove one negative box of some color and add one positive box of the different color. Consider two boxes $\Box_0<\Box_1$  with positions $p_0$ and $p_1$. Suppose both boxes are concave in a state $\la$. 

Note that $p_0/p_1\not\in\{q_1^{\pm1},q_3^{\pm1}\}$. Indeed, if, for example, $p_0=p_1q_1$ then by Lemma \ref{trivial lem}, the $\ell$-weight of the state $\la\sqcup \Box_0$ has a double pole at $z=p_1$, which contradicts (A2). It follows that $\Box_1$ is concave in $\la\sqcup \Box_0$ and $\Box_0$ is concave in $\la\sqcup \Box_1$.

Then we compute the matrix coefficient, 
\begin{align*}
&\bra{\la\sqcup \Box_0\sqcup\Box_1}\Big((w-q_1z)(w-q_3z) F_1(z)F_0(w)-(z-q_1w)(z-q_3w)F_0(w)F_1(z)\Big)\ket{\la}\\ &=
    \delta(z/p_1)\delta(w/p_0)
    \big( a^1_{\la\sqcup\Box_0,\Box_1}a^0_{\la,\Box_0}(w-q_1z)(w-q_3z) - a^0_{\la\sqcup\Box_1,\Box_0} a^1_{\la,\Box_1} (z-q_1w)(z-q_3w) \big).  
\end{align*}
Since $\Box_0<\Box_1$, we have 
\begin{align*}   a^1_{\la\sqcup\Box_0,\Box_1}&=a^1_{\la,\Box_1},\\ a^0_{\la\sqcup\Box_1,\Box_0}&=a^0_{\la,\Box_0}\frac{(p_0-q_1p_1)(p_0-q_3p_1)}{(q_1p_0-p_1)(q_3p_0-p_1)},
\end{align*}
and the matrix coefficient is zero. The case $\Box_0>\Box_1$ is similar.

 Suppose $\Box_0$ is concave in $\la$, and $\Box_1$ is concave in $\la\sqcup\Box_0$ but not in $\la$. By Lemma \ref{trivial lem} and by (A2), $p_1=q_1^{-1}p_0$ or $p_1=q_3^{-1}p_0$.
 Then $F_0(w)F_1(z)\ket{\la}=0$, and
 \begin{align*}
&\bra{\la\sqcup \Box_0\sqcup\Box_1}\Big((w-q_1z)(w-q_3z) F_1(z)F_0(w)\Big)\ket{\la}\\&=a^0_{\la,p_0}a^1_{\la\sqcup \Box_0,p_1}\delta(z/p_1)\delta(w/p_0)(w-q_1z)(w-q_3z) =0.
\end{align*}
The $F_0F_1$ relations is proved.  

The $F_iF_i$ and $E_iE_j$ quadratic relations are similar.

\medskip
Consider the action of $E_0(w)F_1(z)-F_1(z)E_0(w)$ on a state $\ket{\la}$. Let $\Box_1$, be a concave box of color $1$ and position $p_1$. Let $\Box_0>\Box_1$ be a concave box of color $0$ and position $p_0$. Lemma \ref{trivial lem}  and assumption (A2) imply that  the following three statements are equivalent: (i) $\Box_0\not\in CV(\la\sqcup\Box_1)$;  (ii) $\Box_1\not\in CC(\la\setminus \Box_0)$; (iii) $p_0/p_1\in\{q_1,q_3\}$. If these statements hold, then 
\be
\bra{\la\sqcup \Box_1\setminus \Box_0}\,E_0(w)F_1(z)\ket{\la}=\bra{\la\sqcup \Box_1\setminus \Box_0}\,F_1(z)E_0(w) \ket{\la}=0
\ee 
Otherwise,
\begin{align*}
&\bra{\la\sqcup \Box_1\setminus \Box_0} \big(E_0(w)F_1(z)-F_1(z)E_0(w)\big) \ket {\la}\\
&=\delta(w/p_0)\delta(z/p_1)\big(a^1_{\la,\Box_1}b^0_{\la\sqcup\Box_1,\Box_0} - a^1_{\la\setminus \Box_0,\Box_1}b^0_{\la,\Box_0}(z)\big) \\
&=\delta(w/p_0)\delta(z/p_1)a^1_{\la\setminus \Box_0,\Box_1}b^0_{\la,\Box_0} \big((A^{-1}_{1,p_1}(p_0))_0 (A^{-1}_{0,p_0}(p_1))_0 -1 \big)=0,
\end{align*}
where in the last step we used the second equation in \eqref{A symmetries)}.  

%\zh{Here and below we write the formulas assuming that both boxes are positive. However, if some of the boxes are negative, the result holds just the same.  Importantly, adding a positive box or removing a negative box change the $\ell$-weight by the same factor of $A^{-1}_{c(\Box),p(\Box)}$. Therefore, in the case above, the formula for the prefactor changes but we obtain the same factor of $(A^{-1}_{1,p_1}(p_0))_0 (A^{-1}_{0,p_0}(p_1))_0 -1)$ which makes the coefficient zero.}

It follows that  $E_0(w)F_1(z)=F_1(z)E_0(w)$.

\medskip

Consider $[E_0(z),F_0(w)]$ applied to a state $\ket{\la}$. Similarly to the previous computation, non-diagonal matrix coefficients vanish. The diagonal matrix coefficient is a sum of terms corresponding to adding and removing a concave box of color $0$ or removing and adding a convex box of color $0$. We have
\begin{align*}
&\bra{\lambda}E_0(z)F_0(w)\ket{\lambda} 
=\sum_{\Box\in CC_0(\lambda)}\delta(w/p(\Box))\delta(z/p(\Box))\,
a_{\lambda,\Box}b_{\lambda\sqcup\Box,\Box}\,,\\
&\bra{\lambda}F_0(w)E_0(z)\ket{\lambda} 
=\sum_{\Box\in CV_0(\lambda)}\delta(w/p(\Box))\delta(z/p(\Box))\,
a_{\lambda\setminus\Box,\Box}b_{\lambda,\Box}\,.
\end{align*}
In view of \eqref{phi lambda|} and $(A_{0,p_0}(p_0))=-1$, we have 
\begin{align*}
&a_{\lambda,\Box}b_{\lambda\sqcup\Box,\Box}=\frac{1}{q-q^{-1}}
\mathop{\res}_{z=p(\Box)}\phi^0_{\la}(z)\frac{dz}{z}
\quad \text{for $\Box\in CC_0(\lambda)$}\,,\\
&a_{\lambda\setminus\Box,\Box}b_{\lambda,\Box}=-\frac{1}{q-q^{-1}}
\mathop{\res}_{z=p(\Box)}\phi^0_{\la}(z)\frac{dz}{z}
\quad \text{for $\Box\in CV_0(\lambda)$}\,.
\end{align*}
By (A3), (A4), the positions of concave and convex boxes of color $0$ are exactly poles of $\phi^0_\la(z)$ and these poles are simple. Therefore, 
\begin{align*}
\bra{\lambda}[E_0(z),F_0(w)]\ket{\lambda} 
=\delta(z/w)\frac{1}{q-q^{-1}}
\sum_{p_0}
\delta(z/p_0)\mathop{\res}_{z=p_0}\phi^0_{\la}(z)\frac{dz}{z}\,, 
\end{align*}
the sum ranging over all poles $p_0$ of $\phi^0_\lambda(z)$. 
Since $\phi^0_\lambda(z)$ a balanced rational function with simple poles, the sum in the right hand side equals to the difference of expansions of $\phi^0_\lambda(z)$ at $z=\infty$ and $z=0$.
This proves the $E_0F_0$ relation.

\medskip

Finally, we discuss Serre relations. Again, there are many cases with positive boxes, negative boxes, ordering of the boxes, choice of colors, choice of $E$ or $F$. We give details for the case of three $F_1$, one $F_0$, boxes  $\Box_i$, $i=1,2,3$, of color $1$ and positions $p_i$ and box $\Box_0$ of color $0$ and position $p_0$, such that $\Box_1>\Box_2>\Box_3>\Box_0$. Suppose all these four boxes are concave in $\lambda$. Then, as before, for all permutations $\sigma\in S_4$, $\Box_{\sigma_i}$ is concave in  $\la\sqcup_{j=1}^{i-1}\Box_{\sigma_j}$
Therefore the matrix coefficient between $\ket{\la\sqcup\Box_0\sqcup_{i=1}^3\Box_i}$ and $\ket{\la}$ for the Serre relation contains $6\times 24$ summands. The first factor $6$ corresponds to choice of $\delta$ functions: for each permutation $\sigma=(\sigma_1,\sigma_2,\sigma_3)\in S_3$ of three elements, we have 24 terms containing $\delta(w/p_0)\prod_{i=1}^3 \delta(z_i/p_{\sigma_i})$. We claim that the sum of each group of 24 terms is zero. Indeed, denote the coordinates of the affine roots:
\begin{align*}
    h(z_1,z_2)=-\frac{z_1-q_2z_2}{z_2-q_2z_1}=(A_{0,z_1}(z_2))_0 , \qquad  k(z,w)=\frac{(z-q_1w)(z-q_3w)}{(w-q_1z)(w-q_3z)}=(A_{0,z}(w))_1 .
\end{align*}
Then the matrix coefficient of operator $[F_1(z_1)[F_1(z_2)[F_1(z_3),F_0(w)]_{q_2}]]_{q_2^{-1}}$ is  given by
\begin{align*}
\delta(w/p_0)\prod_{i=1}^3 \delta(z_i/p_{\sigma_i})a^0_{\la,\Box_0} a^1_{\la,\Box_{\sigma_3}}a^1_{\la\sqcup\Box_{\sigma_3},\Box_{\sigma_2}}a^1_{\la\sqcup\Box_{\sigma_3}\sqcup\Box_{\sigma_2},\Box_{\sigma_1}} S(z_1,z_2,z_3), 
\end{align*}
where
\begin{align*}
S(z_1,z_2,z_3)=(1-q_2 k(z_3,w))(1-k(z_3,w)k(z_2,w))(1-q_2^{-1}k(z_3,w)k(z_2,w)k(z_1,w)).
\end{align*}
After symmetrization with respect  to $z_1,z_2,z_3$, we obtain zero
due to the explicit identity of rational functions
\begin{align}\label{Serre identity}
S(z_1, z_2, z_3)& + S(z_2, z_1, z_3) h(z_1, z_2) +
   S(z_1, z_3, z_2) h(z_2, z_3) + S(z_2, z_3, z_1) h(z_1, z_2) h(z_1, z_3)\notag \\& +
   S(z_3, z_1, z_2) h(z_1, z_3) h(z_2, z_3)+
   S(z_3, z_2, z_3) h(z_1, z_2) h(z_1, z_3) h(z_2, z_3)=0
\end{align} 
(where one should set $z_i=p_{\sigma_i}$ and $w=p_0$).

This is the generic case. We have a number of cases of codimension one or two when some terms are missing.
Suppose, for example, that only $\Box_2$ and $\Box_3$ are convex in $\la$. By (A5) it happens when $p_0=q_1^{-1}p_2$ (or $p_0=q_3^{-1}p_2)$ and $p_0=q_1p_1$ (or $p_0=q_3p_1$). In particular, $\Box_1>\Box_0>\Box_2$.  Then, out of 24 terms only 4 survive which also sum up to zero (here we treat the case $\Box_2>\Box_3$):
\begin{align*} 
1+h(p_3,p_1)-h(p_3,p_1)k(p_3,p_0)-h(p_3,p_1)k(p_3,p_0)h(p_3,p_2)=0.
\end{align*} 
This identity can be checked directly or deduced from \eqref{Serre identity} by taking limit $p_2\to q_1^{-1}p_0$, $p_1\to q_1p_0$. In that limit $k(p_2,p_0)$ has a pole and $k(p_1,p_0)$ has a zero,  while all other values are regular and non-zero. So, the four terms are exactly those which have a pole and therefore their sum vanishes in the limit.

All other cases are checked similarly.

\medskip

The module $V$ is tame by (A1). It is irreducible, since $S$ is connected.

\section{Examples of combinatorial $\E$-modules.}\label{sec:examples}
It seems that Theorem \ref{main thm} can be applied to many combinatorial sets $S$. We have no classification of all possibilities. Here we give a number of examples starting with known ones previously constructed by different methods and then proceed to new ones. In each case we describe the set $S$ and the initial weight $\Psi(z)$ 
and then check the assumptions of Theorem \ref{main thm}. 

\subsection{Vector representations}
We define a combinatorial module $U_0$ associated with 
the data $S(U_0),\Psi(U_0)$ given as follows.

 Let $\Psi(U_0)=0_{q^{-1}}1_{qq_1}^{-1}$. Let the set of boxes consist of boxes with coordinates $(i,0,0)$, and color $i$ mod 2, $i\in\Z$. The box with coordinates $(i,0,0)$ is negative if and only if $i$ is negative.

The set of states is $S(U_0)=\Z$. For $k\geq 0$, the state $k$ consists of $k$ positive boxes with coordinates $(0,0,0),(1,0,0),\dots,(k-1,0,0)$.   For $k< 0$, the state $k$ consists of $|k|$ negative boxes with coordinates $(-1,0,0),(-2,0,0),\dots,(k,0,0)$. The state $0$ 
is the reference state.

\begin{prop}
The space $U_0=V(S(U_0),\Psi(U_0))$ with action \eqref{action} is an irreducible tame $\E$-module of level $1$ called the vector module of color $0$.
\end{prop}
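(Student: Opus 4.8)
The plan is to verify that the data $(S(U_0),\Psi(U_0))$ satisfies the five assumptions (A1)--(A5) of Theorem~\ref{main thm}, after which the theorem immediately delivers a tame irreducible $\E$-module, and then to separately identify the level. The combinatorial set here is the simplest possible: a single infinite chain of boxes along the $x$-axis, so the states are indexed by $\Z$ and each state $k$ differs from $k\pm 1$ by adding or removing exactly one box. First I would compute the $\ell$-weights explicitly. Using the definition $\phi_\la(z)=\Psi(z)\prod_{\Box\in\la^+}A^{-1}_{c(\Box),p(\Box)}(z)\prod_{\Box\in\la^-}A_{c(\Box),p(\Box)}(z)$ and the positions $p((i,0,0))=q_1^{-i}$, the product telescopes because consecutive affine-root factors share poles/zeros. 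I expect $\phi_k(z)$ to be a balanced rational function of the form $0_{q^{-1}q_1^{-k}}\,1^{-1}_{qq_1^{1-k}}$ (a single simple pole in each color component), matching the shift structure of the vector representation.

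Next I would check the assumptions in order. For (A1), distinctness of $\ell$-weights follows since the single pole of $\phi^0_k(z)$ sits at $z=q^{-1}q_1^{-k}$, and genericity of $q,d$ guarantees these are distinct for distinct $k$. For (A2), the telescoping computation shows each $\phi^i_k(z)$ is balanced with only simple poles. For (A3), I would verify that each state $k$ has exactly one concave and one convex box available: the box $(k,0,0)$ is concave (adding it gives state $k+1$) and the box $(k-1,0,0)$ is convex (removing it gives state $k-1$), matching the single color-$i$ pole of $\phi^i_k(z)$ via $\Box\mapsto p(\Box)$. Here the coloring convention---box $(i,0,0)$ has color $i\bmod 2$---ensures the concave and convex boxes at each step carry the correct colors to account for the poles in the two components $\phi^0$ and $\phi^1$ alternately. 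Assumption (A4) is a regularity statement about $\phi_{\la,>\Box}$ or $\phi_{\la,<\Box}$ at the relevant pole, which in this one-dimensional chain is straightforward since there are no competing factors. Assumption (A5), the alignment condition, is essentially vacuous because all boxes lie on the line $y=z=0$ and growth only happens in the $x$-direction.

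The main obstacle---though modest in this simplest example---will be the bookkeeping in (A3) and the coloring: one must confirm that for \emph{each} color $i\in\{0,1\}$ the set $CC_i(\la)\sqcup CV_i(\la)$ biject onto the poles of $\phi^i_\la(z)$, and since the boxes alternate color along the chain, at a given state $k$ only one color has a concave box and only one (possibly the same, possibly the other) has a convex box. I would track carefully that the pole of $\phi^0_k$ corresponds to the unique color-$0$ convex-or-concave box and likewise for $\phi^1_k$. Once (A1)--(A5) are confirmed, Theorem~\ref{main thm} yields that $U_0=V(S(U_0),\Psi(U_0))$ is tame and irreducible, the latter because $S(U_0)=\Z$ is connected (every state reaches the reference state $0$ by successively removing boxes).

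Finally I would compute the level. By the remark following the definition of level, the level is the scalar by which $K_0K_1$ acts, equivalently the product $\phi^0_\la(\infty)\phi^1_\la(\infty)$ evaluated via $K_i=K_i^+(\infty)$. Since each $A_{i,a}(z)$ is balanced with $A^i(0)A^i(\infty)=1$ and, more to the point, the two components of every affine root $A_{i,a}(z)$ multiply to a level-preserving factor, I expect $K_0K_1$ to act by the same scalar on every state, determined entirely by $\Psi(U_0)=0_{q^{-1}}1^{-1}_{qq_1}$. Evaluating the leading coefficients of $0_{q^{-1}}$ and $1^{-1}_{qq_1}$ at $z=\infty$ and taking the product of the $K_0$ and $K_1$ eigenvalues gives the level, which I anticipate equals $1$, consistent with the vector representation being a level-one module. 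This completes the identification and the proposition.
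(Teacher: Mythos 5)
Your proposal is correct and follows essentially the same route as the paper: compute the $\ell$-weight of state $k$ explicitly (the paper records it as $0_{q^{-1}q_1^{-k}}1_{qq_1^{-k+1}}^{-1}$ for $k$ even and with the two colors swapped for $k$ odd, which is the alternation you flag), observe that each state has exactly one convex box $(k-1,0,0)$ and one concave box $(k,0,0)$ with $\phi_{k,>(k-1,0,0)}=\phi_{-k,<(-k+1,0,0)}=1$ so that (A1)--(A5) hold, and invoke Theorem \ref{main thm}. Your level computation from $\Psi(U_0)=0_{q^{-1}}1_{qq_1}^{-1}$ together with the fact that each $A_{i,a}$ contributes $q_2\cdot q_1q_3=1$ to $K_0K_1$ is a correct filling-in of a detail the paper leaves implicit.
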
 
\begin{proof}

Indeed, by induction on $k$, clearly the $\ell$-weight of the state $k$ is
\begin{align}\label{vector monomial}
\phi_k
=\begin{cases}  
0_{q^{-1}{q_1^{-k}}} 1_{qq_1^{-k+1}}^{-1},  & {\rm if }\  
k {\rm \ is\  even}, \\
0_{q q_1^{-k+1}}^{-1} 1_{q^{-1}q_1^{-k}},  & {\rm if }\ 
k {\rm\ is\  odd}.
\end{cases}
\end{align}
At the same time there is a convex box $(k-1,0,0)$ and a concave box $(k,0,0)$. We also note that for $k>0$, $\phi_{k,>(k-1,0,0)}(z)=\phi_{-k,<(-k+1,0,0)}(z)=1$. Thus, the assumptions (A1)--(A5) clearly hold.

Thus the proposition follows from Theorem \ref{main thm}.
\end{proof}

One can think about the reference state as the union of all negative boxes with 
coordinates
$(j,0,0)$, $j\in\Z_{<0}$. 
Then for all $k\in\Z$
the state $k$ is the set consisting of
all boxes $(j,0,0)$, $j\in\Z_{<k}$. Thus adding a negative box to a state is actually 
removing a box from this infinite chain of boxes.

The vector representation $U_1$ of color $1$ is obtained by interchanging  colors. The module $U_1$ is isomorphic to $U_0$ shifted by $q_1$.  The representation $U_0$ shifted by $q_1^2$ is isomorphic to $U_0$.

Another vector representation could be obtained by exchanging $q_1$ and $q_3$.

More vector representations can be obtained by ``shifts" --
by twisting with the shift of spectral parameter automorphism $\tau_a$.

\iffalse In fact, we used the helpful simple formula
    \begin{align}\label{row formula}
    \prod_{i=0}^{k-1} A^{-1}_{0,aq_1^{-2i}} \prod_{i=0}^{k-1} A^{-1}_{1,aq_1^{-2i-1}}=\Big(\frac{(z-a)(z-aq_2^{-1}q_1^{-2k})}{(z-aq_2^{-1})(z-aq_1^{-2k})},\frac{(z-aq_1)(z-a q_2q_1^{1-2k}}{(z-aq_3^{-1})(z-aq_1^{1-2k})}\Big)=0_{aq^{-1}}^{-1}1_{aqq_1}0_{aq^{-1}q_1^{-2k}}1^{-1}_{aq_1^{1-2k}q},
    \end{align}
and a similar formula where $0$ is swapped with $1$ in the left had side and coordinates are swapped in the right hand side.
\fi

\medskip

Clearly, the characters of the vector representations are given by
\be
\chi_{U_i}(z_0,z_1)=(1+z_i)\sum_{k\in\Z} (z_0z_1)^k=(1+z_i)\,\delta(z_0z_1).
\ee 

The module $U_i$ is an irreducible module of vertical $U_q\widehat{\gl}_2$. It is a quantization of the natural $\gl_2[t,t^{-1}]$-module $\C^2[t,t^{-1}]$.

\subsection{Fock modules}
Next we define the Fock modules $\mathcal  F_i$ as combinatorial $\E$ modules.

Let $\Psi(\mathcal F_0)=0_{q^{-1}}$.

Let the set of boxes consist of boxes with coordinates $(i,0,j)$, and color $i+j$ mod 2, where $i,j\in\Z_{\geq 0}$. All the boxes are positive, we have no negative boxes.  The ordering of boxes is lexicographic.

A partition $\la$ is a non-increasing sequence of non-negative numbers $\la_1\geq \la_2\geq\dots\geq \la_s\geq 0$ called parts. We do not distinguish between partitions which differ by zero parts.  We identify a partition with a set of boxes using the Young diagram. A partition $\la$ corresponds to the set of boxes with coordinates $(i,0,j)$, $j=0,\dots,s-1$, $i=0,\dots,\la_{j-1}-1$.

Let the set of states $S(\mathcal F_0)$ consist of all partitions. 
Let the empty partition be the reference state.

An example of a state is pictured on Figure \ref{Fock state}.
(Recall that $q_1,q_2,q_3$ correspond to coordinates $x,y,z$ respectively and the position of the box is $p(\Box)=q_1^{-x(\Box)} q_2^{-y(\Box)}q_3^{-z(\Box)}$.)

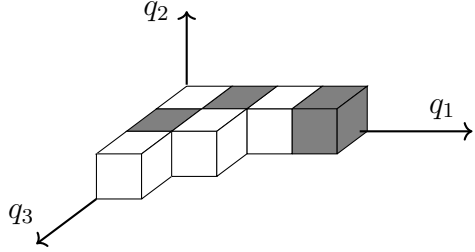
\begin{figure}[ht]
\centering
\begin{tikzpicture}
\bbox at (3,0);
\fbbox at (2.6,-0.9);
\rbbox at (3.2,-0.9);

\fwbox at (2.0,-0.9);
\wbox at (2.4,0);

\bbox at (1.8,0);

%\bbox at (1.8,0.6);
%\rwbox at (1.4,0.3);
%\rbbox at (1.0,-0.6);
%\wbox at (1.2, 1.2);
%\fwbox at (0.8, 0.3);
\bbox at (0.8, -0.3);
%\fbbox at (0.4, -0.6);
\wbox at (0.4, -0.6);
\fwbox at (0, -1.5);
\rwbox at (0.6, -1.5);
\wbox at (1.4, -0.3);
\fwbox at (1.0, -1.2);
\rwbox at (1.6, -1.2);
\wbox at (1.2, 0.0);

%\fdbox  at (0.8,-0.3);
%\rdbox  at (1.4,-0.3);

\draw[thick,->] (3.5,-0.6)--(5,-0.6);
\node at (4.6,-0.3)  {$q_1$};
\draw[thick,->] (1.2,0.02)--(1.2,1.0);
\node at (0.8,1.0)  {$q_2$};
\draw[thick,->] (0,-1.5)--(-0.8,-2.1);
\node at (-1,-1.7)  {$q_3$};

\end{tikzpicture}
\caption{A state in the Fock module.}
\label{Fock state}
\end{figure}

This state corresponds to the partition $\la=(4,2,1)$. 
We color the boxes by white and black colors, corresponding to colors $0$ and $1$. Thus colors alternate in both $x$- and $z$-directions. 

There are 4 concave boxes: $(4,0,0)$ (white), $(2,0,1),(1,0,2),(0,0,3)$ 
(black) and 
3 convex boxes: $(3,0,0)$ (black), $(1,0,1),(0,0,2)$ 
(white). Accordingly, the $\ell$-weight of this state is
\be
\phi_{(4,2,1)}=0_{q^{-1} q_1^{-4}}0^{-1}_{qq_1^{-1}q_3^{-1}}0^{-1}_{qq_3^{-2}} 1^{-1}_{qq_1^{-3}}1_{q^{-1}q_1^{-2}q_3^{-1}}1_{q^{-1}q_1^{-1}q_3^{-2}}1_{q^{-1}q_3^{-3}}.
\ee
Informally, one can think that this state is a tensor product of three vectors corresponding to rows: state $4$ from $U_0$, state $2$ from $U_1$ shifted by $q_3^{-1}$, and state $1$ from $U_0$ shifted by $q_3^{-2}$.

\medskip

\begin{prop}\label{Fock prop}
The space $\mathcal F_0=V(S(\mathcal F_0),\Psi(\mathcal F_0))$ with action \eqref{action} is an irreducible tame highest $\ell$-weight module of highest $\ell$-weight $\Psi(\mathcal F_0)$. In particular it has level $q$. 
\end{prop}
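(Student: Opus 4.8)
The plan is to verify that the combinatorial data $(S(\mathcal F_0),\Psi(\mathcal F_0))$ satisfies assumptions (A1)--(A5) of Theorem \ref{main thm}, and then to identify the highest $\ell$-weight and level. Since all boxes here are positive and $S(\mathcal F_0)$ is the set of all partitions, connectedness is immediate: any partition can be reduced to $\emptyset$ by successively removing outer corner (convex) boxes, so the irreducibility conclusion of Theorem \ref{main thm} will apply. The main work is the verification of the pole-counting assumptions (A2)--(A4), which amounts to a direct but careful computation of $\phi_\la(z)$ for an arbitrary partition $\la$.

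First I would compute $\phi_\la(z)$ explicitly. Using the definition $\phi_\la(z)=\Psi(z)\prod_{\Box\in\la}A^{-1}_{c(\Box),p(\Box)}(z)$ together with the telescoping structure of the root factors $A_{i,a}$, one finds massive cancellation along each row and column, exactly as illustrated in the worked example $\la=(4,2,1)$. The cleanest approach is to organize the product row-by-row using the row formula \eqref{row formula} (the commented-out display in the vector-representation subsection), which collapses a full row of boxes into a single ratio $0_{\bullet}^{-1}1_{\bullet}0_{\bullet}1^{-1}_{\bullet}$ of fundamental-weight factors; taking the telescoping product over rows $j=0,1,\dots$ with the $q_3^{-j}$ shifts should leave $\phi_\la(z)$ as a balanced rational function whose poles are in bijection with the corner boxes of $\la$. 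Concretely, the poles of $\phi^0_\la$ and $\phi^1_\la$ land precisely at the positions $p(\Box)$ for $\Box$ a concave or convex box of the corresponding color, and genericity of $q,d$ guarantees these positions are pairwise distinct and the poles are simple. This establishes (A2), (A3), and (A4) simultaneously. Assumption (A1), distinctness of $\ell$-weights, then follows because $\deg(\la)=(\deg_0\la,\deg_1\la)$ is recoverable from $\phi_\la$ (each added box multiplies by a distinct root factor, and genericity prevents coincidences), while (A5) is the purely geometric statement that the boxes added to or removed from a Young diagram at a corner are the adjacent lattice boxes, which holds by construction of partitions.

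The identification of the highest $\ell$-weight and level is then short. The empty partition is the reference state with $\phi_\emptyset(z)=\Psi(\mathcal F_0)=0_{q^{-1}}=\big(\tfrac{qz-q^{-1}}{z-1},\,1\big)$, and $E_i(z)\ket{\emptyset}=0$ for both colors since $\emptyset$ has no convex boxes, so $\ket{\emptyset}$ is a highest $\ell$-weight vector generating $\mathcal F_0$ by connectedness. The level is the scalar by which $K_0K_1$ acts: evaluating $\phi^0_\emptyset(\infty)\phi^1_\emptyset(\infty)=K_0K_1$ on $\ket{\emptyset}$ gives $K_0=q$, $K_1=1$, hence level $q$.

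The hard part will be the bookkeeping in the $\phi_\la(z)$ computation: one must show that for a \emph{general} partition (not just the example) the cancellations leave exactly the corner positions as simple poles, with no accidental higher-order poles or surviving interior factors. The genericity hypothesis on $q,d$ is essential here to rule out position collisions between non-adjacent boxes, and the cleanest way to control this is to prove the row-product formula \eqref{row formula} and then telescope, reducing the many-box cancellation to a single manageable induction on the number of rows.
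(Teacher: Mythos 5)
Your proposal is correct and follows essentially the same route as the paper: the paper also verifies (A1)--(A5) by organizing the product row by row, observing that the $\ell$-weight of row $i$ is a $q_3^{-i}$-shift of the vector-module $\ell$-weight \eqref{vector monomial} times $\Psi(U_0)^{-1}$ (resp.\ $\Psi(U_1)^{-1}$ for odd rows), so that the only surviving poles sit at the row-end corner boxes plus the first-column concave box contributed by $\Psi(\mathcal F_0)$, and distinctness of $\ell$-weights follows since a partition cannot have two convex (or two concave) boxes at the same position. The one cosmetic caveat is that the ``row formula'' you cite is commented out of the source, so you should state and prove it yourself (or appeal directly to \eqref{vector monomial}) rather than reference it.
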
 
\begin{proof}
The poles of the $\ell$-weight of a partition correspond to the convex and concave boxes. Indeed, the $\ell$-weight of row $i$ is a shift of 
$\phi_{\la_i}
\Psi(U_0)^{-1}$
by $q_3^{-i}$ if $i$ is even and similarly with the change of colors if $i$ is odd, see \eqref{vector monomial}. Then 
$\Psi(\mathcal F_0)$ multiplied by shifts of 
$\Psi(U_0)^{-1}$ 
and  
$\Psi(U_1)^{-1}$ 
produces only a pole in the convex box in the first column of $\la$. 

Clearly, a partition cannot have two convex boxes with the same position. A partition cannot have two concave boxes with the same position either. Therefore all $\ell$-weights are distinct. Thus the assumptions (A1)--(A5) hold.

Thus the proposition follows from Theorem \ref{main thm}.
\end{proof}

The module $\mathcal F_0$  described in Proposition \ref{Fock prop} is called the Fock module of color $0$. The Fock module $\mathcal F_0$  is the unique irreducible highest $\ell$-weight $\E$-module with highest $\ell$-weight $\Psi(\mathcal F_0)=0_{q^{-1}}=(\frac{qz-q^{-1}}{z-1},1)$. 

The Fock module $\mathcal F_1$ of color $1$ is obtained from $\mathcal F_0$ by changing colors.
The Fock module $\mathcal F_{1}$  is the unique irreducible highest $\ell$-weight $\E$-module with highest $\ell$-weight $\Psi(\mathcal F_1)=1_{q^{-1}}$. 

The $\E$ Fock modules in parallel realization were constructed in  \cite{FJMM1} using the inductive limit of tensor products of vector representations, an analog of the semi-infinite wedge construction.

\medskip 

The character of the Fock space does not factorize, but we have the usual partition formula when $z_0=z_1=z$.
\be
\chi_{\mathcal F_i}(z,z)=\frac{1}{\prod_{i=1}^\infty (1-z^i)}.
\ee 

The Fock module $\mathcal F_0$ restricted to the vertical  $U_q^v\widehat{\gl}_2\subset \E$ subalgebra is the irreducible basic module of level $q$. It is generated by a singular vector $v$ such that $K_0v=qv$ and $K_1v=v$.
If $q=q_3$,
then $\mathcal F_0$ is also an evaluation module and $\Psi(\mathcal F_0)$ has the form 
\eqref{evaluation weight} with $a=q$, $b=1$, and $u=q^{-1}$. 

\begin{rem} \rm{As an exercise, remark that one can describe the Fock module in a slightly different form by declaring box $(1,0,1)$ negative. Then the empty configuration would not be a highest weight vector but the vector corresponding to one box in the standard realization described above. In particular, we would have $\Psi=0_q^{-1}1_{q_1q}1_{q_3q}$. \qed}
\end{rem}

\subsection{Macmahon modules.} 
We now define the Macmahon module $M_0^\kappa$ with $\kappa\in\C^{\times}$.

Let $\kappa$ be generic. 
It means $\kappa\not\in q_1^\Z q_2^\Z$. 
Let $$\Psi(M_0^\kappa)=\Big(\frac{z \kappa-\kappa^{-1}}{z-1},1\Big).$$

Let the set of boxes consist of boxes with coordinates $(i,j,k)$, and color $i+k$ mod 2, where $i,j,k\in\Z_{\geq 0}$. All the boxes are positive, we have no negative boxes. The ordering of boxes is lexicographic.

For partitions $\la,\mu$ we say $\mu\subset \la$ if for all $j$, $\la_j\geq \mu_j$. 

A plane partition $\bs \la$ is a
set of partitions $\la^{(1)}\supset \la^{(2)}\supset \dots\supset \la^{(s)}$ called layers. We do not distinguish between plane partitions which differ by zero layers.

The set of states $S(M_0^\kappa)$ is the set of all plane partitions.  The reference state is the empty plane partition.

The Young diagram of a plane partition $\bs \la$ is a union of Young diagrams of layers $\la^{(j)}$ placed on height $j-1$. The boxes corresponding to $\la^{(j)}_i$ have coordinates $(0,j-1,i-1),(1,j-1,i-1),\dots, (\la^{(j)}_i-1,j-1,i-1)$. %We say $(i,j,k)\in\bs \la$, if the box with coordinates $(i,j,k)$ is present in the Young diagram of $\bs \la$. .

Note that the colors alternate along $q_1$ and $q_3$ axes and repeat along $q_2$ axes. The starting box with coordinates $(0,0,0)$ has color $0$.

An example of a state in $M_0^\kappa$ is given in Figure \ref{Macmahon picture}. In the figure, $\la^{(1)}=(4,2,1),$ $\la^{(2)}=(3,1)$, and $\la^{(3)}=(1)$.

\begin{figure}[ht]
\centering
\begin{tikzpicture}
\bbox at (3,0);
\fbbox at (2.6,-0.9);
\rbbox at (3.2,-0.9);
\rwbox at (2.6,-0.3);
\fwbox at (2.0,-0.9);
\fwbox at (2.0,-0.3);
\wbox at (2.4,0.6);
\bbox at (1.8,0.6);
\rwbox at (1.4,0.3);
\rbbox at (1.0,-0.6);
\wbox at (1.2, 1.2);
\fwbox at (0.8, 0.3);
\bbox at (0.8, 0.3);
\fbbox at (0.4, -0.6);
\wbox at (0.4, -0.6);
\fwbox at (0, -1.5);
\rwbox at (0.6, -1.5);
\wbox at (1.4, -0.3);
\fwbox at (1.0, -1.2);
\rwbox at (1.6, -1.2);
\fbbox at (1.4, -0.3);
\draw[thick,->] (3.5,-0.6)--(5,-0.6);
\node at (4.6,-0.3)  {$q_1$};
\draw[thick,->] (1.2,1.2)--(1.2,2.2);
\node at (1.6,2.0)  {$q_2$};
\draw[thick,->] (0,-1.5)--(-0.8,-2.1);
\node at (-1,-1.7)  {$q_3$};

\end{tikzpicture}
\caption{A state in the Macmahon module.}
\label{Macmahon picture}
\end{figure}
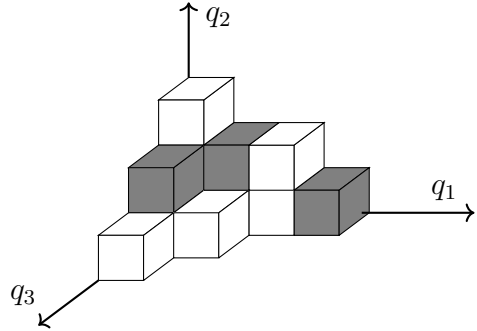

\begin{prop}\label{Macmahon prop}
The space $M_0^\kappa=V(S(M_0^\kappa),\Psi(M_0^\kappa))$ with action \eqref{action} is an irreducible tame highest $\ell$-weight
$\E$-module of highest $\ell$-weight $\Psi(M_0^\kappa)$. In particular, the level of  $M_0^\kappa$ is $\kappa$.
\end{prop}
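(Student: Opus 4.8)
The plan is to verify the five assumptions (A1)--(A5) for the Macmahon data $(S(M_0^\kappa),\Psi(M_0^\kappa))$ and then invoke Theorem \ref{main thm}. The structural observation driving everything is that a plane partition is a stack of partitions $\la^{(1)}\supset\la^{(2)}\supset\dots$, and that moving along the $q_2$-axis (the layer/height direction) preserves color while moving along $q_1,q_3$ alternates it. Since the $\ell$-weight is built multiplicatively from the root factors $A^{-1}_{c(\Box),p(\Box)}(z)$ attached to the boxes, I would first compute $\phi_{\bs\la}(z)$ explicitly by grouping boxes layer by layer, reusing the single-row formula \eqref{vector monomial} and the Fock-module computation of Proposition \ref{Fock prop}. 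The key point is that within each layer the telescoping is exactly as in the Fock case, so most factors cancel and only the boundary boxes (convex/concave) of the two-dimensional ``staircase'' contribute poles.

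First I would establish (A3): that $\Box\mapsto p(\Box)$ is a bijection from $CC_i(\bs\la)\sqcup CV_i(\bs\la)$ onto the poles of $\phi^i_{\bs\la}(z)$. Concretely, a box is concave precisely when it can be added to keep $\bs\la$ a plane partition (i.e.\ it sits on an outer corner of some layer and is supported from below and from the previous layer), and convex when it can be removed. After the telescoping cancellation, the surviving numerator/denominator factors of $\phi^i_{\bs\la}$ sit exactly at the positions $p(\Box)=q_1^{-x}q_2^{-y}q_3^{-z}$ of these corner boxes. I would then check (A2): the poles are simple and each $\phi^i_{\bs\la}$ is balanced. Balancedness is automatic since each $A_{i,a}$ is balanced and $\Psi(M_0^\kappa)$ is balanced; simplicity of poles is where the genericity hypothesis $\kappa\notin q_1^{\Z}q_2^{\Z}$ enters, together with the genericity of $q,d$ ($q^aq^b=1\Rightarrow a=b=0$), ensuring no two distinct corner boxes share a position and no accidental coincidence of a $\kappa$-pole with a box-pole occurs.

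For (A1) I would argue that distinct plane partitions have distinct $\ell$-weights: since the $A_{i,a}$ generate a free-abelian-like structure under the genericity assumptions, the map $\bs\la\mapsto\phi_{\bs\la}$ is injective (no two plane partitions give the same monomial in the $A$'s), analogous to the Fock argument but now in three dimensions. Assumption (A4) is the regularity-at-the-relevant-pole statement: for a positive concave box $\Box$ the factors coming from boxes ordered after $\Box$ in the lexicographic order $y>z>x$ should not contribute a pole at $z=p(\Box)$, which follows because a later box sharing that pole would force a double pole and contradict (A2). Assumption (A5) is the purely combinatorial ``alignment'' statement about which boxes change convexity/concavity when one adds a corner box $(x,y,z)$; for plane partitions this is the standard local geometry of adding a box at an inner/outer corner, and the candidate new concave boxes are exactly the three neighbors $(x,y,z+1),(x,y+1,z),(x+1,y,z)$.

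The main obstacle I expect is the pole-simplicity and distinctness analysis in (A2)--(A3): one must rule out coincidences among the positions $q_1^{-x}q_2^{-y}q_3^{-z}$ of the many corner boxes of a large plane partition, and in particular ensure that the extra pole introduced by the nontrivial first component $\frac{z\kappa-\kappa^{-1}}{z-1}$ of $\Psi(M_0^\kappa)$ never collides with a box-position pole nor creates a spurious double pole. This is precisely the role of the genericity $\kappa\notin q_1^{\Z}q_2^{\Z}$ and of the genericity of $q,d$; once these non-collision facts are in hand, (A1), (A4), (A5) are comparatively routine consequences of the layer-by-layer structure and the lexicographic ordering, and the proposition follows immediately from Theorem \ref{main thm}, with the level equal to $\kappa$ read off from $K_0K_1$ acting via $\Psi^0(\infty)\Psi^1(\infty)$ on the reference state.
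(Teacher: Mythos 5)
Your proposal is correct and follows essentially the same route as the paper: the paper's proof of this proposition is literally ``similar to the proof of Proposition \ref{Fock prop}'', i.e.\ verify (A1)--(A5) by telescoping the $\ell$-weight over rows/layers so that only corner (convex/concave) boxes survive as poles, use genericity of $q,d,\kappa$ to rule out coincidences of positions, and invoke Theorem \ref{main thm}. Your layer-by-layer reduction to the Fock computation, and reading the level off $\Psi^0(\infty)\Psi^1(\infty)=\kappa$, is exactly the intended argument.
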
 
\begin{proof}
The proof is similar to the proof of Proposition \ref{Fock prop}.
\end{proof}
We call the module  $M_0^\kappa$ constructed in Proposition \ref{Macmahon prop} the Macmahon module of color $0$.

The Macmahon module $M_1^\kappa$ of color $1$ is the irreducible highest $\ell$-weight $\E$-module with highest $\ell$-weight $\Psi(M_1^\kappa)=(1,\frac{z\kappa-\kappa^{-1}}{z-1})$. It is obtained from $M_0^\kappa$ by exchanging colors.

The Macmahon modules are building blocks for highest $\ell$-weight $\E$-modules. The  Macmahon module was constructed explicitly in  \cite{FJMM1} using inductive limit construction of tensor products of Fock modules.

\medskip 

We note the combinatorial difference between the Fock module $F_0$ and Macmahon module $M_0^\kappa$. Both modules are generated from the empty reference state which has a pole $z=1$ in the zero component of the $\ell$-weight. After adding a white box the $\ell$-weight is multiplied by $A^{-1}_{0,1}$. This creates two poles at $z=q_1^{-1}$ and $z=q_3^{-1}$ in the first component of the $\ell$-weight. The difference is in the zero component. In the case of the Fock module, the zero $z=q^{-2}$ of $\Psi(F_0)=0_{q^{-1}}$ cancels the pole of  $A^{-1}_{0}$ and no more poles are created. While in the Macmahon case such a cancellation does not happen and therefore, one obtains an additional concave box with coordinates $(0,1,0)$. In addition, since the zero of $\Psi(M_0^\kappa)$ is not used, it can be arbitrary and therefore the Macmahon module depends on a parameter $\kappa$.

\medskip

For special values of %of 
$\kappa$ the Macmahon module $M_0^\kappa$ may develop a submodule due to the factor $(\kappa p(\Box)-\kappa^{-1})$ in the coefficients $b_{\bs \la,  \Box}$ in the action of $E_0(z)$. %Factorizing by this submodule, we obtained a module $\bar M(K)$. 
To take advantage of this construction, choose a white box $\Box$, $c(\Box)=0$ with coordinates $(i,j,k)$ such that $\min\{i,j,k\}<1$ and specialize 
$\kappa^2=p(\Box)^{-1}=q_1^iq_2^jq_3^k$. 
Then the action of $E_0(z)$ cannot remove the chosen box due to vanishing of the coefficient $b_{\bs \la,  \Box}$. As the result we obtain an irreducible tame $\E$-module $\bar M_0^\kappa$ whose basis is given by all plane partitions which do not contain the chosen white box, see \cite{FJMM1}.

The following two cases are of 
special importance since their restrictions to the horizontal $U_q\widehat{\gl_2}$ are irreducible.

Let  $\kappa=q$. Then the prohibited box has coordinates $(0,1,0)$ and 
the corresponding module has a basis consisting
of colored plane partitions with one layer. Moreover, $\Psi(M_0^q)=\Psi(F_0)$. So the irreducible quotient $\bar M_0^q$ is the Fock module $F_0$.

%In other words, the single colored partition. This module is called the Fock module $F_0$. The Fock module has level $q$. The Fock module restricted to horizontal $U_q\widehat{\gl_2}$ is an irreducible module isomorphic to the vacuum integrable module of level $q$. 

We show a state in the Fock space $F_0$ corresponding to partition $\la=(4,2,1)$ in Figure \ref{Fock picture}. The prohibited box is shown dashed.

\begin{figure}[ht]
\centering
\begin{tikzpicture}
\bbox at (3,0);
\fbbox at (2.6,-0.9);
\rbbox at (3.2,-0.9);

\fwbox at (2.0,-0.9);
\wbox at (2.4,0);

\bbox at (1.8,0);

%\bbox at (1.8,0.6);
%\rwbox at (1.4,0.3);
%\rbbox at (1.0,-0.6);
%\wbox at (1.2, 1.2);
%\fwbox at (0.8, 0.3);
\bbox at (0.8, -0.3);
%\fbbox at (0.4, -0.6);
\wbox at (0.4, -0.6);
\fwbox at (0, -1.5);
\rwbox at (0.6, -1.5);
\wbox at (1.4, -0.3);
\fwbox at (1.0, -1.2);
\rwbox at (1.6, -1.2);
\wbox at (1.2, 0.0);

\draw[dashed] (0.8,0.3)--++(0.4,0.3)--++(0.6,0.0);
\draw[dashed] (1.2,0.6)--++(0,-0.6);
\fdbox  at (0.8,-0.3);
\rdbox  at (1.4,-0.3);

\draw[thick,->] (3.5,-0.6)--(5,-0.6);
\node at (4.6,-0.3)  {$q_1$};
\draw[thick,->] (1.2,0.62)--(1.2,1.6);
\node at (1,2.0)  {$q_2$};
\draw[thick,->] (0,-1.5)--(-0.8,-2.1);
\node at (-1,-1.7)  {$q_3$};

\end{tikzpicture}
\caption{The prohibited box and the Fock module.}
\label{Fock picture}
\end{figure}
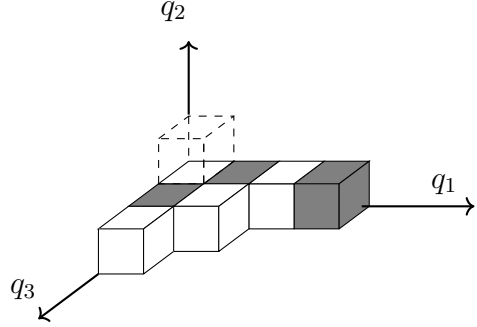

\medskip

Let $\kappa=q_3$. Then the prohibited box has coordinates $(0,0,2)$. We denote the corresponding irreducible $\E$-module $\bar M_0^{q_3}$ by $G_0$. One can say that this module has a basis labeled by plane partitions with two layers placed vertically. Importantly, the coloring is different since vertically the coloring does not change. 

%Comparing the highest weight of $G_0$ to \eqref{evaluation weight}, we see that $G_0$ is an evaluation module obtained from the $U_q\widehat{\gl_2}$-module of highest weight $(q_3,1)$. In particular, $G_0$ restricted to horizontal $U_q\widehat{\gl_2}$ is an irreducible module isomorphic to the parabolic Verma module.

 We show a state in the module $G_0$ corresponding to the
partition $\bs \la=((4,3),(2,1),(1),(1))$ in Figure \ref{G picture}. The same picture can be seen as two layer partition $\bs \mu=((4,2,1,1),(2,1,1))$ placed vertically. The prohibited box is shown dashed.

Note that we have a similar module corresponding to the choice $\kappa=q_1$ (the prohibited box has coordinates $(2,0,0)$). 
Also, if one starts from $M_1$, the same choices of $\kappa$ produces two similar modules. The difference in pictures is the color of the initial box with coordinates $(0,0,0)$. 

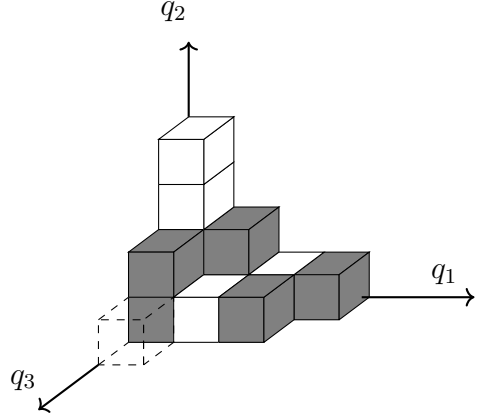
\begin{figure}[ht]
\centering
\begin{tikzpicture}
\bbox at (3,0);
\fbbox at (2.6,-0.9);
\rbbox at (3.2,-0.9);

%\fwbox at (2.0,-0.9);
%\wbox at (2.4,0);

%\bbox at (1.8,0);

\bbox at (1.8,0.6);
\rbbox at (1.0,-0.6);
\rwbox at (1.4,0.3);
\fwbox at (0.8, 0.3);
\rwbox at (1.4,0.9);
\fwbox at (0.8, 0.9);
\wbox at (1.2, 1.8);
\bbox at (0.8, 0.3);
\fbbox at (0.4, -1.2);
\rbbox at (1.0, -0.6);
\fbbox at (0.4, -0.6);
%\wbox at (0.4, -0.6);

\fwbox at (1.0, -1.2);
%\rwbox at (1.6, -1.2);
\wbox at (1.4, -0.3);
\fbbox at (1.6, -1.2);
\rbbox at (2.2, -1.2);
\bbox at (2.0, -0.3);
\fbbox at (1.4, -0.3);
\rbbox at (2.0, -0.3);

\wbox at (2.4, 0);

%\fwbox  at (0.8,-0.3);
%\rdbox  at (1.4,-0.3);

\fdbox at (0, -1.5);
\rdbox at (0.6, -1.5);
\draw[dashed] (0,-1.5)--++(0.4,0.3);
\draw[dashed] (0,-0.9)--++(0.4,0.3);

\draw[thick,->] (3.5,-0.6)--(5,-0.6);
\node at (4.6,-0.3)  {$q_1$};
\draw[thick,->] (1.2,1.8)--(1.2,2.8);
\node at (1,3.2)  {$q_2$};
\draw[thick,->] (0,-1.5)--(-0.8,-2.1);
\node at (-1,-1.7)  {$q_3$};

\end{tikzpicture}
\caption{A state in the $G_0$ module.}
\label{G picture}
\end{figure}

\medskip

The character of the Macmahon modules is given by, see Theorem 7.6 in \cite{BM},
\be
\chi_{M_0^\kappa}(z_0,z_1)=\frac{1}{\prod_{i=1}^\infty (1-(z_0z_1)^{i})^{2i}(1-z_0(z_0z_1)^{i-1})^{i}(1-z_1(z_0z_1)^{i-1})^{i-1}}.
\ee 
In particular, we have the famous Macmahon formula for plane partitions, 
\be
\chi_{M_0^\kappa}(z,z)=\frac{1}{\prod_{i=1}^\infty (1-z^i)^i}.
\ee

\medskip 

The Macmahon module $M_0^\kappa$ restricted to the vertical  $U_q^v\widehat{\gl}_2\subset \E$ subalgebra is not irreducible. The reference state $\ket{\emptyset}$ is a highest $\ell$-weight vector of weight $(\kappa,1)$ and therefore the irreducible $U_q^v\widehat{\gl}_2$-subquotient containing the reference state is a parabolic Verma module. When $\kappa=q_3$, the module $G_0$ is the evaluation parabolic Verma module. In particular, the weight $\Psi(M_0^{q_3})$ has form \eqref{evaluation weight} with $a=q_3$, $b=1$, and $u=q_3^{-1}$. Note that $q_3$ is a generic number with respect to the parameter  $q$ of $U_q^v\widehat{\gl}_2$.

\subsection{Evaluation Verma modules}
Let $\mu$ be a non-integer complex number.
Let 
\be
\Psi(G_0^\mu)=\Big(q_3q^{-\mu}\frac{z-q_3^{-2}}
{z- q_2^{-\mu}},\ q^\mu\frac{z-q_3^{-1}q_2^{-\mu}}{z-q_3^{-1}}\Big).
\ee

Let the set of boxes consist of boxes with  positions $(i,k,1)$, color $i+1$ mod 2, and of boxes with positions $(i,k+\mu,0)$, color $i$ mod 2.  Here $i,k\in\Z_{\geq 0}$. All the boxes are positive, we have no negative boxes.
The ordering of boxes with the same $z$-coordinate is lexicographic. The boxes with $z$-coordinate $1$ are larger than the boxes with $z$-coordinate $0$. 

In fact, the last condition actually could be opposite.

The set of states $S(G_0^\mu)$ is the set of pairs of partitions positioned vertically.  The reference state is the pair of empty partitions.

A state in  $S(G_0^\mu)$ is shown on Figure \ref{verma picture}. Here we read the two partitions vertically: $(4,2,1,1)$ and $(2,1,1)$. Since the position of the first partition is shifted by $(0,\mu,0)$ we picture it on a pedestal of height $\mu$ which is infinite in direction of $q_1$ and one box thick (shaded).

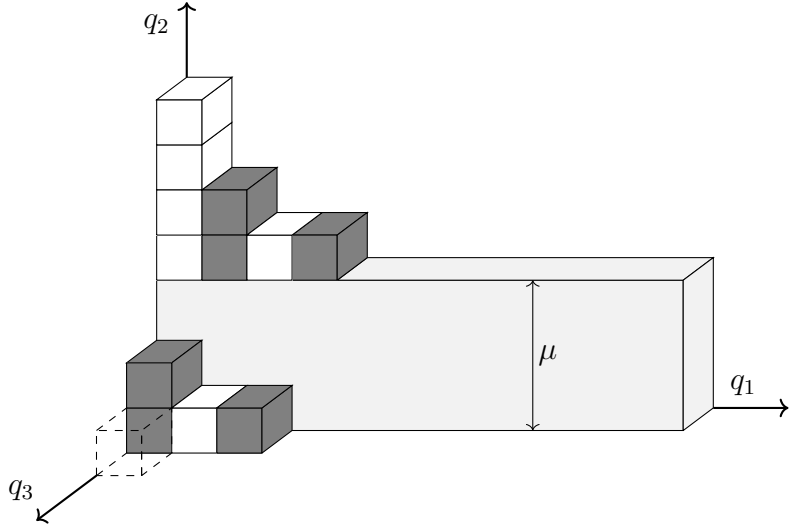
\begin{figure}[ht]
\centering
\begin{tikzpicture}
\filldraw[fill=gray!10] (0.8,-0.9)--++(0,2)--++(7,0)--++(0,-2)--(0.8,-0.9);
\filldraw[fill=gray!10] (3.2,1.1)--++(0.4,0.3)--++(4.6,0)--++(-0.4,-0.3)--(3.2,1.1);
\filldraw[fill=gray!10] (7.8,-0.9)--++(0,2)--++(0.4,0.3)--++(0,-2)--(7.8,-0.9);

\bbox at (3,2);
\fbbox at (2.6,1.1);
\rbbox at (3.2,1.1);

%\fwbox at (2.0,-0.9);
%\wbox at (2.4,0);

%\bbox at (1.8,0);

\bbox at (1.8,2.6);
%\rbbox at (1.0,2.4);
\rwbox at (1.4,2.3);
\fwbox at (0.8, 2.3);
\rwbox at (1.4,2.9);
\fwbox at (0.8, 2.9);
\wbox at (1.2, 3.8);
\bbox at (0.8, 0.3);
\fbbox at (0.4, -1.2);
\rbbox at (1.0, -0.6);
\fbbox at (0.4, -0.6);
%\wbox at (0.4, -0.6);

\fwbox at (1.0, -1.2);
%\rwbox at (1.6, -1.2);
\wbox at (1.4, -0.3);
\fbbox at (1.6, -1.2);
\rbbox at (2.2, -1.2);
\bbox at (2.0, -0.3);
\fbbox at (1.4, 1.7);
\rbbox at (2.0, 1.7);
\fbbox at (1.4, 1.1);
\fwbox at (2.0, 1.1);
\wbox at (2.4,2.0);
\fwbox at (0.8,1.1);
\fwbox at (0.8,1.7);

%\fwbox  at (0.8,-0.3);
%\rdbox  at (1.4,-0.3);

\fdbox at (0, -1.5);
\rdbox at (0.6, -1.5);
\draw[dashed] (0,-1.5)--++(0.4,0.3);
\draw[dashed] (0,-0.9)--++(0.4,0.3);

\draw[thick,->] (8.2,-0.6)--(9.2,-0.6);
\node at (8.6,-0.3)  {$q_1$};
\draw[thick,->] (1.2,3.8)--(1.2,4.8);
\node at (0.8,4.5)  {$q_2$};
\draw[thick,->] (0,-1.5)--(-0.8,-2.1);
\node at (-1,-1.7)  {$q_3$};
%\filldraw[fill=gray!10] (0.8,-0.6)--++(0,1.8)--++(5,0)--++(0,-1.8)--(0.8,-0.6);
\draw[<->] (5.8,-0.9)--(5.8,1.1);
\node at (6,0.1)  {$\mu$};

\end{tikzpicture}
\caption{A state in the Verma module $G_0^\mu$.}
\label{verma picture}
\end{figure}

\begin{prop}\label{Verma prop}
The space $G_0^\mu=V(S(G_0^\mu),\Psi(G_0^\mu))$ with action \eqref{action} is an irreducible tame highest $\ell$-weight $\E$-module of highest $\ell$-weight $\Psi(G_0^\mu)$. In particular, the level of $G_0^\mu$ is $q_3$.
\end{prop}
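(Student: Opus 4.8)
The plan is to verify the hypotheses (A1)--(A5) of Theorem \ref{main thm} for the combinatorial data $(S(G_0^\mu),\Psi(G_0^\mu))$, since once these are checked the conclusion (irreducible, tame, highest $\ell$-weight of weight $\Psi(G_0^\mu)$, level $q_3$) is immediate from that theorem together with the explicit form of $\Psi(G_0^\mu)$. Following the pattern of Propositions \ref{Fock prop} and \ref{Macmahon prop}, the structural heart of the argument is to understand the $\ell$-weight of a state as a product over its rows. Each state is a pair of partitions $(\bs\la,\bs\mu)$ stacked vertically in the $q_2$-direction, the $z=1$ layer sitting above the $z=0$ layer. I would first record that the $\ell$-weight of a single row of color-alternating boxes is, up to a shift, exactly the monomial $\phi_k\,\Psi(U_0)^{-1}$ of \eqref{vector monomial} coming from the vector representation, so that $\phi_{(\bs\la,\bs\mu)}(z)$ factors as $\Psi(G_0^\mu)$ times a product of such row contributions with shifts $q_2^{-k}$ (and $q_3^{-1}q_2^{-k-\mu}$ for the second partition because of the pedestal of height $\mu$ in the $q_2$-direction).

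The key computation is then the telescoping of this product. Within each column (fixed $x$, fixed $z$) the successive root factors $A_{0}^{\pm1}$ and $A_{1}^{\pm1}$ cancel in pairs along the $q_2$-axis, exactly as in the Macmahon case, leaving poles only at the convex and concave boxes on the boundary of the two-layer Young diagram. The role of the special initial weight $\Psi(G_0^\mu)$ is to produce precisely the ``seed'' poles: the factor $(z-q_2^{-\mu})^{-1}$ in the zeroth component and $(z-q_3^{-1})^{-1}$ in the first component create the poles corresponding to the first concave boxes at the bottom of each partition, while the numerators $q_3q^{-\mu}(z-q_3^{-2})$ and $q^\mu(z-q_3^{-1}q_2^{-\mu})$ are arranged to cancel the unwanted boundary poles that would otherwise appear (this is the analogue of the zero of $\Psi(\mathcal F_0)$ cancelling a pole in Proposition \ref{Fock prop}). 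After this cancellation, assumption (A3) --- that $\Box\mapsto p(\Box)$ is a bijection from $CC_i(\la)\sqcup CV_i(\la)$ onto the poles of $\phi^i_\la(z)$ --- follows by matching the surviving simple poles with the outer and inner corners of the stacked diagram, and (A2) (balanced, simple poles) follows since $\mu$ is non-integer so no two positions of concave/convex boxes of the same color ever collide.

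For (A1), distinctness of $\ell$-weights, I would argue as in Proposition \ref{Fock prop}: a state is recoverable from the multiset of positions of its convex and concave boxes, and non-integrality of $\mu$ guarantees the two partitions cannot interfere (positions of boxes at $z=0$ carry a $q_2^{-\mu}$ shift and hence never coincide with positions of boxes at $z=1$). Assumption (A5) is the purely local alignment condition on concave/convex boxes; it holds automatically for Young-diagram-shaped configurations by the same reasoning used for partitions and plane partitions in the earlier examples, and (A4) --- regularity of $\phi^i_{\la,>\Box}$ or $\phi^i_{\la,<\Box}$ at the relevant pole --- follows from the lexicographic ordering $y>z>x$ combined with the telescoping, since for a corner box the root factors lying strictly above it (in the ordering) do not contribute a pole at its own position.

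\emph{The main obstacle} I anticipate is the bookkeeping of exactly which boundary poles survive after telescoping, and in particular verifying that the two carefully chosen numerators in $\Psi(G_0^\mu)$ cancel precisely the spurious poles at the junction between the pedestal and the two Young diagrams --- this is where the non-integer parameter $\mu$ and the level $q_3$ (forcing the identification $\Psi(G_0^\mu)$ to have the evaluation form \eqref{evaluation weight}) enter essentially. Since the authors state that the proof is ``similar to the proof of Proposition \ref{Macmahon prop}'', I expect they will simply point to that cancellation and the bijection (A3), and I would do the same: confirm the row-by-row product, perform the telescoping within columns, and then check that the surviving poles are in bijection with the corners of the two stacked partitions, with $\mu\notin\Z$ guaranteeing all positions are distinct.
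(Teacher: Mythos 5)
Your proposal is correct and follows essentially the same route as the paper: the authors prove Proposition \ref{Verma prop} by declaring it ``similar to the proof of Proposition \ref{Fock prop}'', i.e.\ by decomposing the $\ell$-weight of a state into shifted vector-representation row monomials \eqref{vector monomial}, telescoping, and matching the surviving simple poles with the convex and concave corner boxes, with non-integrality of $\mu$ ensuring distinctness of positions and of $\ell$-weights. Your additional attention to which numerators of $\Psi(G_0^\mu)$ cancel the spurious boundary poles is exactly the bookkeeping the paper leaves implicit.
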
 
\begin{proof}
The proof is similar the proof of Proposition \ref{Fock prop}.
\end{proof}
Alternatively, one can construct the module $G_0^\mu$ by starting with the $G_0$ module. Consider states in $G_0$ which contain all the boxes in the pedestal (with positions $(i,j,0)$ where $i=0,1,\dots,\mu-1,$ $j=0,1,\dots,\la-1$ with
(large) positive integers $\la,\mu$) and observe
that the matrix coefficients stabilize as $\la\to \infty$ and can be continued with respect to $\mu$ to generic values. We do not give further details.

\medskip

The character of the module $G_0^\mu$ is a product 
of those
of vertical partitions. Unlike the case of the Fock modules, the character of the vertical partitions  is readily computed similarly to the non-graded case. One simply reads the partition along the columns.
Set $t=z_0z_1$. Thus, the character of one vertical partition whose top box has color $i$ is 
\be
\bar\chi_i(z_0,z_1)=\frac{1}{\prod_{j=1}^\infty (1-t^{j}) (1-z_it^{j-1})},
\ee
and
\be
\chi_{G_0^\mu}(z_0,z_1)=\bar\chi_0(z_0,z_1)\bar\chi_1(z_0,z_1)=\frac{1}{\prod_{i=1}^\infty (1-t^{i})^2 (1-z_0t^{i-1})(1-z_1t^{i-1})}.
\ee

The module $G_0^\mu$ restricted to the vertical  $U_q^v\widehat{\gl}_2\subset \E$ subalgebra is the Verma module
of weight $(q_3 q^{-\mu},q^\mu)$. Moreover, $G_0^\mu$ is the evaluation module obtained from the Verma module, and $\Psi(G_0^\mu)$ is given by \eqref{evaluation weight} with $a=q_3q^{-\mu},b=q^\mu$, and $u=q^{-\mu} q_3^{-1}$.

The module $G_1^\mu$ is obtained from $G_0^\mu$ by interchanging colors.
We have the equality $\Psi(G_1^\mu)(z)=\Psi(G_0^{\beta-\mu})(q^{-\beta+2\mu}z)$, where $\beta$ is defined by $q_3=q^\beta$. Thus,
up to a twist by the shift automorphism $\hat\tau_a$,
the module $G_1^\mu$ is isomorphic to $G_0^{\beta-\mu}$.

\subsection{Evaluation relaxed Verma modules}
Let $\mu,\nu$ be complex numbers. We assume that $\mu,\nu$ and $\mu+\nu$ are not integers.
Let 
\be
\Psi(G_0^{\mu,\nu})=
\Big(q_3q^{-\mu-2\nu} \frac{(z-q_3^{-2})(z-q_2^{-\mu+1})}
{(z- q_2^{-\mu-\nu})(z- q_2^{-\mu-\nu+1})},\ q^{\mu+2\nu}\frac{(z-q_3^{-1}q_2^{-\mu-\nu})(z-q_1^{-1}q_2^{-\mu-\nu})}{(z-q_3^{-1})(z-q_1^{-1}q_2^{-\mu})}\Big).
\ee

Let the set of positive boxes consist of boxes with coordinates $(i,k,1)$, color $i+1$ mod 2,  of boxes with coordinates $(i+1,k+\mu,0)$, color $i+1$ mod 2 and of boxes with positions $(0,\mu+\nu+k,0)$, color $0$. Here $i,k\in\Z_{\geq 0}$. In addition, we have negative boxes with coordinates $(0,\mu+\nu-k,0)$, color $0$, $k\in\Z_{>0}$. 

Thus we have three families of boxes. Each family is ordered lexicographically. In addition, we declare that boxes from the
last group are larger than boxes in the second group which are in their turn larger than boxes in the first group. (The last assignment can be changed arbitrarily.)

The set of states $S(G_0^{\mu,\nu})$ is the set of pairs of partitions positioned vertically and an integer $k$. The integer $k$ indicates the number of boxes put on the box with coordinates $(0,\mu+\nu,0)$ 
(negative $k$ means removing $|k|$ boxes). 
The reference state is the pair of empty partitions and integer $0$.

A state in  $S(G_0^{\mu,\nu})$ is shown on Figure \ref{relaxed verma picture}. Here we read the two partitions vertically: $(2,2,1)$ and $(2,1)$ and $k=2$.

Since the position of the boxes above $(0,0,0)$ are shifted by $(0,\mu+\nu,0)$ we picture it on a tower of height $\nu$ which is placed on the previous pedestal (shaded). Note that for negative $k$, $|k|$ boxes are taken off that tower.

Alternatively, one can start with the $G_0^\mu$ module. Then consider states which contain the boxes in the tower above $(0,0,0)$ (with positions $(0,j+\mu,0)$ where $j=0,1,\dots,\nu-1$ with (large) positive integers $\nu$) and observes that the matrix coefficients can be continued with respect to $\nu$ to generic values. We do not give further details.

For that reason we picture the tower over the first box on the pedestal. Though, since $\nu$ is generic, we could place it over any prohibited white box.
 
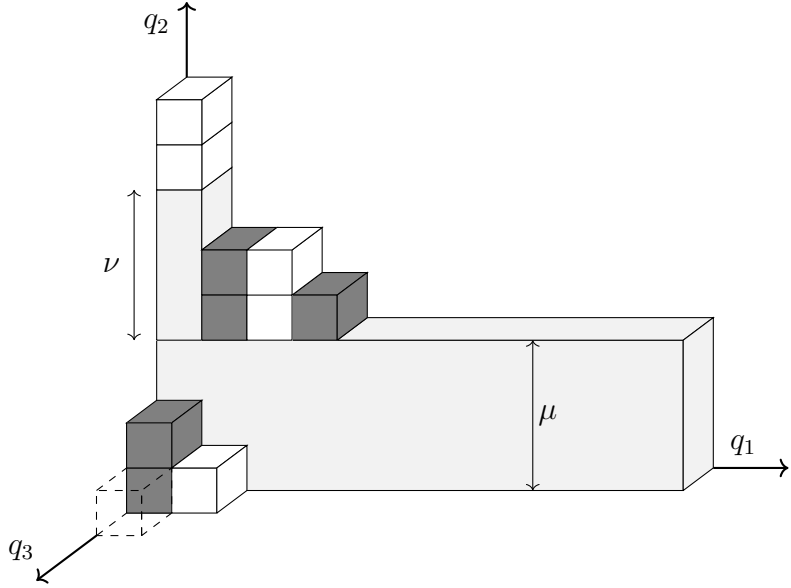
\begin{figure}[ht]
\centering
\begin{tikzpicture}
\filldraw[fill=gray!10] (0.8,-0.9)--++(0,2)--++(7,0)--++(0,-2)--(0.8,-0.9);
\filldraw[fill=gray!10] (3.2,1.1)--++(0.4,0.3)--++(4.6,0)--++(-0.4,-0.3)--(3.2,1.1);
\filldraw[fill=gray!10] (7.8,-0.9)--++(0,2)--++(0.4,0.3)--++(0,-2)--(7.8,-0.9);
\filldraw[fill=gray!10] (0.8,1.1)--++(0,2)--++(0.6,0)--++(0,-2)--(0.8,1.1);
\filldraw[fill=gray!10] (1.4,3.1)--++(0.4,0.3)--++(0,-0.8)--++(-0.4,-0.3)--(1.4,3.1);

\bbox at (3,2);
\fbbox at (2.6,1.1);
\rbbox at (3.2,1.1);

%\fwbox at (2.0,-0.9);
%\wbox at (2.4,0);

%\bbox at (1.8,0);

\bbox at (1.8,2.6);
%\rbbox at (1.0,2.4);
\rwbox at (1.4,3.1);
\fwbox at (0.8, 3.1);
\rwbox at (1.4,3.7);
\fwbox at (0.8, 3.7);
\wbox at (1.2, 4.6);
\bbox at (0.8, 0.3);
\fbbox at (0.4, -1.2);
\rbbox at (1.0, -0.6);
\fbbox at (0.4, -0.6);
%\wbox at (0.4, -0.6);

\fwbox at (1.0, -1.2);
%\rwbox at (1.6, -1.2);
\wbox at (1.4, -0.3);
\rwbox at (1.6, -1.2);
%\fbbox at (1.6, -1.2);
%\rbbox at (2.2, -1.2);
%\bbox at (2.0, -0.3);
\fbbox at (1.4, 1.7);
\rwbox at (2.6, 1.7);
\fbbox at (1.4, 1.1);
\fwbox at (2.0, 1.1);
\wbox at (2.4,2.6);
%\fwbox at (0.8,1.1);
%\fwbox at (0.8,1.7);

%\fwbox  at (0.8,-0.3);
%\rdbox  at (1.4,-0.3);

\fdbox at (0, -1.5);
\rdbox at (0.6, -1.5);
\draw[dashed] (0,-1.5)--++(0.4,0.3);
\draw[dashed] (0,-0.9)--++(0.4,0.3);

\draw[thick,->] (8.2,-0.6)--(9.2,-0.6);
\node at (8.6,-0.3)  {$q_1$};
\draw[thick,->] (1.2,4.6)--(1.2,5.6);
\node at (0.8,5.3)  {$q_2$};
\draw[thick,->] (0,-1.5)--(-0.8,-2.1);
\node at (-1,-1.7)  {$q_3$};
%\filldraw[fill=gray!10] (0.8,-0.6)--++(0,1.8)--++(5,0)--++(0,-1.8)--(0.8,-0.6);
\draw[<->] (5.8,-0.9)--(5.8,1.1);
\node at (6,0.1)  {$\mu$};
\draw[<->] (0.5,1.1)--(0.5,3.1);
\node at (0.2,2.1)  {$\nu$};

\end{tikzpicture}
\caption{A state in the evaluation relaxed Verma module $G_0^{\mu,\nu}$.}
\label{relaxed verma picture}
\end{figure}

\begin{prop}\label{relaxed Verma prop}
The space $G_0^{\mu,\nu}=V(S(G_0^{\mu,\nu}),\Psi(G_0^{\mu,\nu}))$ with action \eqref{action} is an irreducible tame  $\E$-module of level $q_3$. 
\end{prop}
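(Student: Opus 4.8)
The plan is to check, exactly as in the proofs of Propositions \ref{Fock prop}--\ref{Verma prop}, that the combinatorial data $\big(S(G_0^{\mu,\nu}),\Psi(G_0^{\mu,\nu})\big)$ satisfies the hypotheses of Theorem \ref{main thm}: that $S(G_0^{\mu,\nu})$ is connected and that assumptions (A1)--(A5) hold. Granting this, Theorem \ref{main thm} immediately yields a tame irreducible $\E$-module and we are done. The one feature genuinely new compared with the highest-$\ell$-weight examples $\mathcal F_0$, $M_0^\kappa$, $G_0^\mu$ is the family of negative boxes: the reference state $\ket{\emptyset}$ now sits in the \emph{middle} of the infinite color-$0$ tower over $(0,\mu+\nu,0)$ and is no longer a highest-$\ell$-weight vector, which is why the statement claims only tameness and irreducibility and not a highest-$\ell$-weight structure. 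Before the main verification I would record the level: since $K_i$ acts on $\ket{\emptyset}$ by $\Psi^i(G_0^{\mu,\nu})(\infty)$ and $K_0K_1$ is central, the level equals $\Psi^0(G_0^{\mu,\nu})(\infty)\,\Psi^1(G_0^{\mu,\nu})(\infty)=q_3q^{-\mu-2\nu}\cdot q^{\mu+2\nu}=q_3$, as asserted.

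The core of the argument is the computation of $\phi_\la$ for an arbitrary state and the matching of its poles with the convex and concave boxes, i.e.\ assumptions (A2) and (A3). I would split a state into its two vertical partitions, whose contribution is controlled precisely as in Proposition \ref{Verma prop}, and the tower recorded by the integer $k$. Along the column $x=z=0$ the color-$0$ roots telescope: adjoining the positive box $(0,\mu+\nu+j,0)$ multiplies the zeroth component of $\phi_\la$ by the corresponding $0$-entry of $A^{-1}_{0,\,q_2^{-(\mu+\nu+j)}}$, while adjoining the negative box $(0,\mu+\nu-j,0)$ to $\la^-$ multiplies it by $A_{0,\,q_2^{-(\mu+\nu-j)}}$, and successive factors cancel. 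After this cancellation $\phi^0_\la$ is balanced with poles only at the position of the unique concave box capping the tower and the unique convex box just beneath it — for the reference state these are exactly the two poles $z=q_2^{-(\mu+\nu)}$ and $z=q_2^{-(\mu+\nu)+1}$ of $\Psi^0$ — together with the poles contributed by the two partitions, and symmetrically for $\phi^1_\la$; this gives the bijection of (A3). All these poles are simple, which is (A2), provided no two box positions collide and no telescoping factor reinforces a partition pole, and both are excluded by the hypothesis that $\mu$, $\nu$ and $\mu+\nu$ are non-integers, so that the tower positions $q_2^{-(\mu+\nu)+\Z}$ stay off the lattice $q_1^{\Z}q_2^{\Z}q_3^{\Z}$ carrying the partition boxes. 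The same genericity forces all $\ell$-weights to be distinct, giving (A1).

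It then remains to dispose of (A4), (A5) and connectedness. Assumption (A4) I would read off from the prescribed order, in which tower boxes exceed pedestal boxes, which in turn exceed first-partition boxes: for each relevant box $\Box$ the complementary partial product $\phi^i_{\la,>\Box}$ or $\phi^i_{\la,<\Box}$ is, after the telescoping above, manifestly regular at $p(\Box)$, consistently with the freedom in choosing the order remarked on after \eqref{constants 2}. Assumption (A5) is the purely geometric statement that an elementary move preserves the shape ``two vertical partitions plus a tower'', which is immediate from the coordinates. For connectedness, every state is joined to $\ket{\emptyset}$ by a chain of single-box moves — strip the two partitions column by column, then slide the tower to its reference height $\mu+\nu$ — which is the connectivity used for irreducibility in the proof of Theorem \ref{main thm}. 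I expect the main obstacle to be precisely the pole analysis of the tower in (A2)--(A3): one must confirm that the \emph{two-sided} infinite column of color-$0$ boxes telescopes to leave exactly one simple pole on each side, and that generic $\mu,\nu$ keep these tower poles away from the partition poles. This is the single place where the non-highest-weight nature of the module and the genericity of $\mu+\nu$ genuinely enter, the rest of the verification being a direct transcription of Propositions \ref{Verma prop} and \ref{Fock prop}.
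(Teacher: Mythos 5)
Your proposal is correct and follows essentially the same route as the paper: the paper's own proof of this proposition is simply a reference back to the verification scheme of Proposition \ref{Fock prop}, i.e.\ checking connectedness and assumptions (A1)--(A5) and invoking Theorem \ref{main thm}, which is exactly what you do. You in fact supply more detail than the paper does (the telescoping of the color-$0$ roots along the two-sided tower, the role of the non-integrality of $\mu$, $\nu$, $\mu+\nu$, and the level computation), all of which is consistent with the intended argument.
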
 
\begin{proof}
The proof follows the proof of Proposition \ref{Fock prop}.
\end{proof}

%We call the module $G_0^{\mu,\nu}$ the evaluation relaxed Verma module.

\medskip 

The character of $G_0^{\mu,\nu}$ is easy to compute. We set $t=z_0z_1$.
\begin{align*}
\chi_{G_0^{\mu,\nu}}(z_0,z_1)&= \bar\chi_1^2(z_0,z_1)\,\sum_{i\in \Z} z_0^i=\frac{1}{\prod_{j=1}^{\infty}(1-t^j)^2(1-z_1t^{j-1})^2}\,\sum_{i\in \Z} z_0^i\\
&=\frac{1}{\prod_{j=1}^{\infty}(1-t^j)^2(1-z_0^{-1}t^{j})^2}\,\sum_{i\in \Z} z_0^i= \frac{1}{\prod_{j=1}^\infty(1-t^j)^4} \, \sum_{i\in \Z} z_0^i= \frac{1}{\prod_{j=1}^\infty(1-t^j)^4}\, \delta(z_0).
\end{align*}
Here we used the identity $\delta(z)f(z)=f(1)\delta(z)$ which holds for any power series $f(z)$ in $z^{-1}$.

\medskip 
The character of $G_{1}^{\mu,\nu}$ coincides with the character of 
$\hat L(a,b,c;\kappa)$, cf. Figure \ref{flat pic}. Note that by \eqref{grading}, the degree of a black box on Figure \ref{flat pic} is $(-1,0)$ while the degree of a  white box is $(1,-1)$.

The module $G_1^{\mu,\nu}$ restricted to the vertical  
$U_q^v\widehat{\gl}_2\subset \E$ 
subalgebra is the irreducible relaxed Verma module  $\hat L(a,b,c;\kappa)$ with 
$q^a=q_3q^{-\mu-2\nu}$,  
$q^b=1$, $c=(q^\nu-q^{-\nu})(q^{a+1+\nu}-q^{-a-1-\nu})/(q-q^{-1})^2$
and  $\kappa=q_3$, so that $\mathcal{C}=q_3q^{-\mu+1}+q_3^{-1}q^{\mu-1}$. Moreover, $G_1^{\mu,\nu}$ is the evaluation module obtained from $\hat L(a,b,c;\kappa)$.

Similarly, the module $G_0^{\mu,\nu}$ is the evaluation module obtained from slope one relaxed module $\hat L_1(a,b,c;\kappa)$.

\subsection{Evaluation slanted relaxed Verma modules}
Let $\mu,\nu$ be complex numbers. We assume that $\mu,\nu$ and $\mu+\nu$ are not integers. Let $m\geq 1$.

Let 
\begin{align*}
&\Psi(G_{0,m}^{\mu,\nu})=(\Psi^0(G_{0,m}^{\mu,\nu}),\Psi^1(G_{0,m}^{\mu,\nu})), \\
&\Psi^0(G_{0,m}^{\mu,\nu})=q^{-\mu-2\nu}q_3^{-m+1}
\frac{ (z-q_3^{-2})(z-q_1^{-1}q_3^{-1}q_2^{-\mu})\prod_{i=1}^{m-1}(z-q_2^{-\mu-\nu}q_1^iq_3^{-i})}{\prod_{i=0}^m(z-q_2^{-\mu-\nu+1}q_1^iq_3^{-i})},
\\
&\Psi^1(G_{0,m}^{\mu,\nu}) =q^{\mu+2\nu}q_3^m\frac{\prod_{i=0}^{m+1}(z-q_1^{i-1}q_3^{-i}q_2^{-\mu-\nu})}{(z-q_3^{-1})(z-q_1^{-1}q_2^{-\mu})\prod_{i=0}^{m-1}(z-q_1^{i+1}q_3^{-i}q_2^{-\mu-\nu})}.
\end{align*}

It is important to note 
\be
\Psi(G_{0,m}^{\mu,\nu})  \prod_{i=0}^m A^{-1}_{0,q_1^{i}q_3^{-i}q_2^{-\mu-\nu}}\prod_{i=0}^{m-1}A^{-1}_{1,q_1^{i+1}q_3^{-i}q_2^{-\mu-\nu}}=\Psi(G_{0,m}^{\mu,\nu+1}).
\ee 
Here we take a product of $\ell$-weights of $m$ black boxes and $m+1$ white boxes forming a staircase as shown on Figure \ref{top view picture} (with $m=4$) with $y$-coordinate (height) $\mu+\nu$.

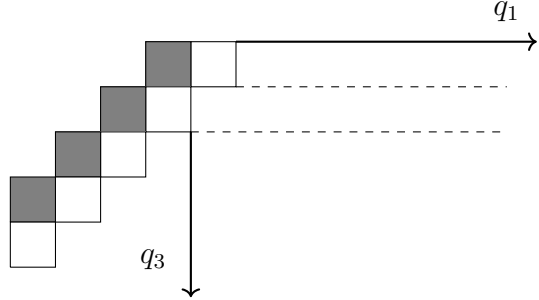
\begin{figure}[ht]
\centering
\begin{tikzpicture}
\fwbox at (1.8, 1.8);
\fbbox at (1.2, 1.8);
\fwbox at (1.2, 1.2);
\fbbox at (0.6, 1.2);
\fwbox at (0.6, 0.6);
\fbbox at (0.0, 0.6);
\fwbox at (0.0, 0.0);
\fbbox at (-0.6, -0.0);
\fwbox at (-0.6, -0.6);

\draw[thick,->] (2.4,2.4)--(6.4,2.4);
\node at (6,2.8)  {$q_1$};
\draw[thick,->] (1.8,1.2)--(1.8,-1);
\node at (1.3,-0.5)  {$q_3$};

\draw[dashed] (2.4,1.8)--(6.0,1.8);
\draw[dashed] (1.8,1.2)--(6.0,1.2); 

\end{tikzpicture}
\caption{A top view of the reference state in the slope $4$ slanted relaxed Verma module $G_{0,4}^{\mu,\nu}$.}
\label{top view picture}
\end{figure}

Let the set of positive boxes consist of boxes with coordinates $(i,k,1)$, color $i+1$ mod 2,  of boxes with coordinates $(i+1,k+\mu,0)$, color $i+1$ mod 2, 
of boxes with coordinates 
$(-s,\mu+\nu+k,s)$,
color $0$, with $s=0,\dots,m$, and  of boxes with coordinates  $(-1-s,\mu+\nu+k,s)$, color $1$, with $s=0,\dots,m-1$. Here $i,k\in\Z_{\geq 0}$. In addition, we have negative boxes with  boxes with coordinates  $(-s,\mu+\nu-k,s)$, color $0$, where $s=0,\dots,m$, and with coordinates $(-1-s,\mu+\nu-k,s)$, color $1$, with $s=0,\dots,m-1,$ $k\in\Z_{>0}$. 

In other words, we can have boxes vertically in the first two rows shown by dashed lines in Figure \ref{top view picture} and boxes above the staircase or below the staircase. The boxes below the staircase are negative. The boxes on the first row are lifted by $\mu$, the ones on the staircase by $\mu+\nu$.

Thus we have three families of boxes: on the bottom, on the pedestal and at the tower. Each family is ordered lexicographically. In addition, we declare that boxes from the
last group are larger than boxes in the second group which are in their turn larger than boxes in the first group. (The last assignment can be changed arbitrarily.)

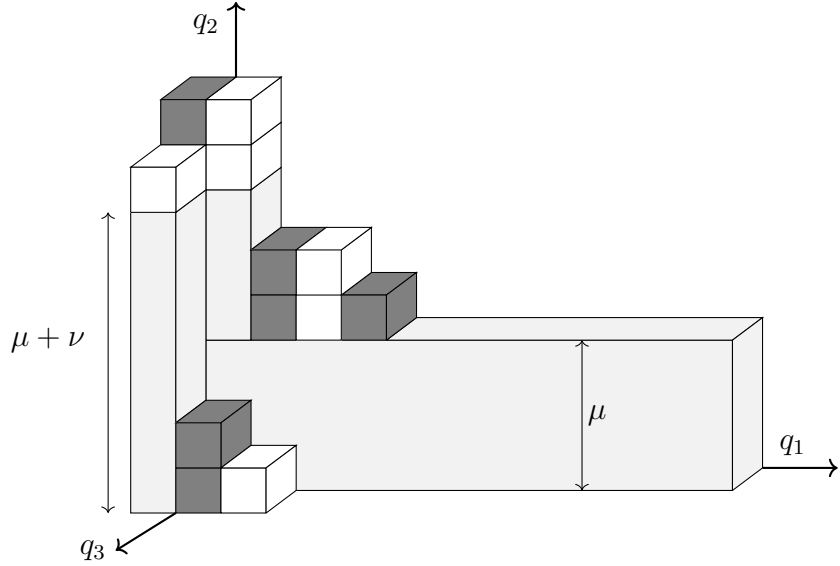
\begin{figure}[ht]
\centering
\begin{tikzpicture}
\filldraw[fill=gray!10] (0.8,-0.9)--++(0,2)--++(7,0)--++(0,-2)--(0.8,-0.9);
\filldraw[fill=gray!10] (3.2,1.1)--++(0.4,0.3)--++(4.6,0)--++(-0.4,-0.3)--(3.2,1.1);
\filldraw[fill=gray!10] (7.8,-0.9)--++(0,2)--++(0.4,0.3)--++(0,-2)--(7.8,-0.9);
\filldraw[fill=gray!10] (0.8,1.1)--++(0,2)--++(0.6,0)--++(0,-2)--(0.8,1.1);
\filldraw[fill=gray!10] (1.4,3.1)--++(0.4,0.3)--++(0,-0.8)--++(-0.4,-0.3)--(1.4,3.1);
\filldraw[fill=gray!10] (-0.2,2.8)--++(0.6,0.0)--++(0,-4)--++(-0.6,0.0)--(-0.2,2.8);
\filldraw[fill=gray!10] (0.4,2.8)--++(0.4,0.3)--++(0,-2.8)--++(-0.4,-0.3)--(0.4,2.8);

\bbox at (3,2);
\fbbox at (2.6,1.1);
\rbbox at (3.2,1.1);

%\fwbox at (2.0,-0.9);
%\wbox at (2.4,0);

%\bbox at (1.8,0);

\bbox at (1.8,2.6);
%\rbbox at (1.0,2.4);
\rwbox at (1.4,3.1);
\fwbox at (0.8, 3.1);
\rwbox at (1.4,3.7);
\fwbox at (0.8, 3.7);
\wbox at (1.2, 4.6);
\bbox at (0.8, 0.3);
\fbbox at (0.4, -1.2);
\rbbox at (1.0, -0.6);
\fbbox at (0.4, -0.6);
%\wbox at (0.4, -0.6);
\bbox at (0.6,4.6);
\wbox at (0.2,3.7);

\rwbox at (0.4,2.8);
\fwbox at (-0.2, 2.8);
%\rwbox at (0.4,3.4);
\fbbox at (0.2, 3.7);

\fwbox at (1.0, -1.2);
%\rwbox at (1.6, -1.2);
\wbox at (1.4, -0.3);
\rwbox at (1.6, -1.2);
%\fbbox at (1.6, -1.2);
%\rbbox at (2.2, -1.2);
%\bbox at (2.0, -0.3);
\fbbox at (1.4, 1.7);
\rwbox at (2.6, 1.7);
\fbbox at (1.4, 1.1);
\fwbox at (2.0, 1.1);
\wbox at (2.4,2.6);

%\fwbox at (0.8,1.1);
%\fwbox at (0.8,1.7);

%\fwbox  at (0.8,-0.3);
%\rdbox  at (1.4,-0.3);

%\fdbox at (0, -1.5);
%\rdbox at (0.6, -1.5);
%\draw[dashed] (0,-1.5)--++(0.4,0.3);
%\draw[dashed] (0,-0.9)--++(0.4,0.3);

\draw[thick,->] (8.2,-0.6)--(9.2,-0.6);
\node at (8.6,-0.3)  {$q_1$};
\draw[thick,->] (1.2,4.6)--(1.2,5.6);
\node at (0.8,5.3)  {$q_2$};
\draw[thick,->] (0.4,-1.2)--(-0.4,-1.7);
\node at (-0.7,-1.7)  {$q_3$};
%\filldraw[fill=gray!10] (0.8,-0.6)--++(0,1.8)--++(5,0)--++(0,-1.8)--(0.8,-0.6);
\draw[<->] (5.8,-0.9)--(5.8,1.1);
\node at (6,0.1)  {$\mu$};
\draw[<->] (-0.5,-1.2)--(-0.5,2.8);
\node at (-1.3,1.1)  {$\mu+\nu$};

\end{tikzpicture}
\caption{A state in the slope $1$ slanted relaxed Verma module $G_{0,1}^{\mu,\nu}$.}
\label{slanted relaxed verma picture}
\end{figure}

The set of states $S(G_{0,m}^{\mu,\nu})$ is the set of pairs of partitions positioned vertically and integers $a_0,\dots,a_m$, $b_0,\dots,b_{m-1}$ such that 
\begin{align}\label{condition}
b_s\geq a_s,\quad  b_s\geq a_{s+1}, \qquad  s=0,\dots,m-1.
\end{align}
The integer $a_s$ indicates the number of the white boxes put on the box with coordinates $(-s,\mu+\nu,s)$ (negative $a_s$ means removing $|a_s|$ boxes). The integer $b_s$ indicates the number of the black boxes put on the box with coordinates $(-s-1,\mu+\nu,s)$ (negative $b_s$ means removing $|b_s|$ boxes). 

The reference state is the pair of empty partitions and all integers equal to $0$.

An example of a state in $S(G_{0,1}^{\mu,\nu})$ is given in Figure \ref{slanted relaxed verma picture}. In the picture we have two vertical partitions  $(2,2,1)$ and $(2,1)$ and on the tower we have $a_0=2$, $a_1=1$, $b_1=2$.

We also show a reference state of $S(G_{0,2}^{\mu,\nu})$ in Figure \ref{Slope 2 picture}. Note that one can add boxes to the white and black colored places. In addition one can add negative boxes which amounts to taking off boxes from the staircase tower.

\begin{prop}\label{slanted relaxed Verma prop}
The space $G_{0,m}^{\mu,\nu}=V(S(G_{0,m}^{\mu,\nu}),\Psi(G_{0,m}^{\mu,\nu}))$ with action \eqref{action} is an irreducible tame  $\E$-module of level 
$q_3$. 
Moreover, for any $k\in\Z$, we have $G_{0,m}^{\mu,\nu}\simeq G_{0,m}^{\mu,\nu+k}$.
\end{prop}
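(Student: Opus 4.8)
The plan is to derive everything except the final isomorphism directly from Theorem \ref{main thm}, so the bulk of the work is verifying the assumptions (A1)--(A5) for the data $(S(G_{0,m}^{\mu,\nu}),\Psi(G_{0,m}^{\mu,\nu}))$ together with the connectedness of $S(G_{0,m}^{\mu,\nu})$; the level and the $\nu$-shift isomorphism are then short additions. As in the proofs of Propositions \ref{Verma prop} and \ref{relaxed Verma prop}, the first step is to split a state into its three families of boxes: the two vertical partitions on the bottom and on the pedestal of height $\mu$, and the staircase tower of height $\mu+\nu$ recorded by the integers $a_0,\dots,a_m,b_0,\dots,b_{m-1}$. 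Correspondingly $\phi_\lambda(z)$ factors, up to the prefactor $\Psi(G_{0,m}^{\mu,\nu})$, into contributions localized on each family, and the analysis of convex and concave boxes and of the poles of $\phi^i_\lambda(z)$ decouples across families provided no two families contribute poles at a common position. This separation is guaranteed by the genericity hypotheses $\mu,\nu,\mu+\nu\notin\Z$, which keep the $q_2^{-\mu}$- and $q_2^{-\mu-\nu}$-scaled positions of the pedestal and tower distinct from the integer-shifted positions of the bottom partition and from each other.

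For the two bottom partitions the verification of (A1)--(A5) is identical to that in Proposition \ref{Verma prop}: the nonzero components of $\phi^i_\lambda(z)$ coming from a vertical partition have simple poles exactly at the positions of its convex and concave boxes, and distinct partitions give distinct data. The new ingredient is the staircase tower, and here I would show that the constraint \eqref{condition}, namely $b_s\ge a_s$ and $b_s\ge a_{s+1}$, is precisely the condition under which the addable and removable boxes of the tower are in bijection with the poles of its contribution to $\phi^i_\lambda(z)$. At each column over a staircase box the top and bottom boxes supply one concave and one convex box, while the adjacency between a black column and its two neighbouring white columns (one in the $q_1$-direction, one in the $q_3$-direction) is exactly regulated by \eqref{condition}, which prevents a would-be addable or removable box from appearing at a corner unless the heights of the neighbouring columns permit it. The symmetry $q_1\leftrightarrow q_3$ and the alternation of colors along the staircase make the computation uniform in $s$.

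I expect the main obstacle to be assumptions (A3) and (A5) at the corners of the staircase: one must check that no spurious double pole arises in $\phi^i_\lambda(z)$ when two tower boxes whose positions differ by $q_1^{\pm1}$ or $q_3^{\pm1}$ are simultaneously present, and that the boxes whose addition or deletion yields a state in $S(G_{0,m}^{\mu,\nu})$ match the local picture prescribed by (A5). Both reduce to Lemma \ref{trivial lem} together with the bookkeeping identity displayed before the proposition: a double pole would force a coincidence of two tower-box positions, which under genericity can only be a staircase adjacency of the form $q_1^iq_3^{-i}$ versus $q_1^{i+1}q_3^{-i}$, and these are exactly the adjacencies controlled by \eqref{condition}. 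Granting (A1)--(A5), Theorem \ref{main thm} gives that $G_{0,m}^{\mu,\nu}$ is a tame irreducible $\E$-module, and connectedness of $S(G_{0,m}^{\mu,\nu})$ follows since the bottom partitions reduce to the empty pair and the tower data $(a_\bullet,b_\bullet)$ can be brought to $0$ by single-box moves inside the region \eqref{condition} (incrementing the bounding $b$'s before the $a$'s they dominate, and symmetrically for decrements). The level is read off from $\phi^0_\lambda(\infty)\phi^1_\lambda(\infty)$, which is constant over states because the product of the two components of $A_{i,a}(z)$ at $z=\infty$ equals $1$; evaluating the leading coefficients of $\Psi^0$ and $\Psi^1$ gives $q^{-\mu-2\nu}q_3^{-m+1}\cdot q^{\mu+2\nu}q_3^{m}=q_3$.

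Finally, for the isomorphism $G_{0,m}^{\mu,\nu}\simeq G_{0,m}^{\mu,\nu+k}$ it suffices to treat $k=1$ and iterate. The displayed identity before the proposition states that multiplying $\Psi(G_{0,m}^{\mu,\nu})$ by the inverse $\ell$-weights of one complete staircase layer at height $\mu+\nu$ produces $\Psi(G_{0,m}^{\mu,\nu+1})$. I would use this to define a bijection of state sets sending a state of $G_{0,m}^{\mu,\nu+1}$ with tower data $(a_\bullet,b_\bullet)$ to the state of $G_{0,m}^{\mu,\nu}$ with tower data $(a_\bullet+1,b_\bullet+1)$ and the same two bottom partitions; the constraints \eqref{condition} are invariant under this common shift, so the map is well defined and bijective. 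Geometrically this is simply the re-indexing of the tower relative to the base raised from $\mu+\nu$ to $\mu+\nu+1$, so corresponding states consist of boxes at the same physical positions differing only by one staircase layer at height $\mu+\nu$; the identity then shows the bijection preserves every $\ell$-weight $\phi_\lambda(z)$. Since the structure constants \eqref{constants 1}, \eqref{constants 2} depend only on the $\ell$-weights and on the shift-invariant convex/concave combinatorics, the bijection intertwines the $\E$-actions, yielding the desired isomorphism.
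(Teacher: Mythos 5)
Your proposal is correct and follows the paper's route exactly: the paper's own proof is the one-line reduction to the argument of Proposition \ref{Fock prop}, i.e.\ verifying (A1)--(A5) family by family and invoking Theorem \ref{main thm}, with the $\nu$-shift isomorphism read off from the displayed identity $\Psi(G_{0,m}^{\mu,\nu})\prod A^{-1}_{\cdot,\cdot}=\Psi(G_{0,m}^{\mu,\nu+1})$. The only imprecision is in your last sentence: for a box $\Box$ in one of the two bottom partitions the quantity $\phi^i_{\la,>\Box}(p(\Box))$ acquires the extra staircase layer's $A^{-1}$-factors under your bijection (while $\phi^i_{\la,<\Box}$ loses them into $\Psi$), so the map intertwines the actions only after a diagonal rescaling $\ket{\la}\mapsto c_\la\ket{\la}$ with $c_\la$ a product of box-dependent constants --- a routine fix that still yields the claimed isomorphism.
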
 
\begin{proof}
The proof is similar to the proof of Proposition \ref{Fock prop}.
\end{proof}

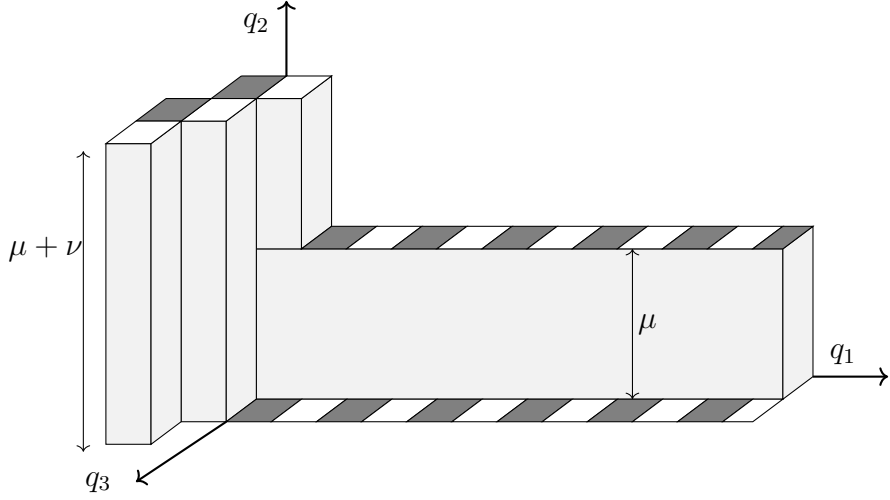
\begin{figure}[ht]
\centering
\begin{tikzpicture}
\filldraw[fill=gray!10] (0.8,-0.9)--++(0,2)--++(7,0)--++(0,-2)--(0.8,-0.9);
%\filldraw[fill=gray!10] (3.2,1.1)--++(0.4,0.3)--++(4.6,0)--++(-0.4,-0.3)--(3.2,1.1);
\filldraw[fill=gray!10] (7.8,-0.9)--++(0,2)--++(0.4,0.3)--++(0,-2)--(7.8,-0.9);
\filldraw[fill=gray!10] (0.8,1.1)--++(0,2)--++(0.6,0)--++(0,-2)--(0.8,1.1);
\filldraw[fill=gray!10] (1.4,3.1)--++(0.4,0.3)--++(0,-2)--++(-0.4,-0.3)--(1.4,3.1);
\filldraw[fill=gray!10] (-0.2,2.8)--++(0.6,0.0)--++(0,-4)--++(-0.6,0.0)--(-0.2,2.8);
\filldraw[fill=gray!10] (0.4,2.8)--++(0.4,0.3)--++(0,-4.0)--++(-0.4,-0.3)--(0.4,2.8);

\filldraw[fill=gray!10] (-1.2,2.5)--++(0.6,0.0)--++(0,-4)--++(-0.6,0.0)--(-1.2,2.5);
\filldraw[fill=gray!10] (-0.6,2.5)--++(0.4,0.3)--++(0,-4.0)--++(-0.4,-0.3)--(-0.6,2.5);

%\filldraw[fill=gray!10] (2.8,-1.2)--++(0.4,0.3)--++(4.6,0)--++(-0.4,-0.3)--(2.8,-1.2);

\wbox at (1.2, 3.4);
\bbox at (0.6,3.4);
\wbox at (0.2,3.1);
\bbox at (-0.4,3.1);
\wbox at (-0.8,2.8);

\bbox at (1.8,1.4);
\wbox at (2.4,1.4);
\bbox at (3.0, 1.4);
\wbox at (3.6,1.4);
\bbox at (4.2,1.4);
\wbox at (4.8, 1.4);
\bbox at (5.4,1.4);
\wbox at (6.0,1.4);
\bbox at (6.6, 1.4);
\wbox at (7.2,1.4);
{\filldraw[fill=gray,-] (7.8,1.4)--++(0.4,0)--++(-0.4,-0.3)--++(-0.4,0)--(7.8,1.4);}

\bbox at (0.8, -0.9);
\wbox at (1.4, -0.9);
\bbox at (2.0, -0.9);
\wbox at (2.6, -0.9);
\bbox at (3.2, -0.9);
\wbox at (3.8, -0.9);
\bbox at (4.4, -0.9);
\wbox at (5.0, -0.9);
\bbox at (5.6, -0.9);
\wbox at (6.2, -0.9);
\bbox at (6.8, -0.9);
{\filldraw[fill=white,-] (7.4,-0.9)--++(0.4,0)--++(-0.4,-0.3)--++(-0.4,0)--(7.4,-0.9);}

\draw[thick,->] (8.2,-0.6)--(9.2,-0.6);
\node at (8.6,-0.3)  {$q_1$};
\draw[thick,->] (1.2,3.4)--(1.2,4.4);
\node at (0.8,4.1)  {$q_2$};
\draw[thick,->] (0.4,-1.2)--(-0.8,-2);
\node at (-1.3,-2)  {$q_3$};
%\filldraw[fill=gray!10] (0.8,-0.6)--++(0,1.8)--++(5,0)--++(0,-1.8)--(0.8,-0.6);
\draw[<->] (5.8,-0.9)--(5.8,1.1);
\node at (6,0.1)  {$\mu$};
\draw[<->] (-1.5,-1.6)--(-1.5,2.4);
\node at (-2,1.1)  {$\mu+\nu$};

\end{tikzpicture}
    \caption{The reference state in the slope 2 slanted relaxed Verma module $G_{0,2}^{\mu,\nu}$.}
\label{Slope 2 picture}
\end{figure}

\medskip

Now we compute the character of $G_{0,m}^{\mu,\nu}$. We recall our notation $t=z_0z_1$ and $\delta(z)=\sum_{i\in\Z}z^i$.

\begin{prop}\label{char prop}
We have
\be
\chi_{G_{0,m}^{\mu,\nu}}(z_0,z_1)= \frac{1}{\prod_{j=1}^\infty(1-t^j)^4}\,\delta(z_0t^m).
\ee 
\end{prop}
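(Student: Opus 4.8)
The plan is to compute the character directly from its definition as the generating series of degrees of states, exploiting the product structure of $S(G_{0,m}^{\mu,\nu})$. A state consists of the two vertical partitions sitting on the bottom and on the pedestal, together with an \emph{independent} ``tower'' configuration recorded by the integers $a_0,\dots,a_m,b_0,\dots,b_{m-1}$ subject to \eqref{condition}. Since the degree is additive over disjoint families of boxes, the character factorizes as $\chi_{G_{0,m}^{\mu,\nu}}=\bar\chi_1^2\cdot T$, where $\bar\chi_i$ is the single vertical-partition character $\prod_{j\geq1}(1-t^j)^{-1}(1-z_it^{j-1})^{-1}$ and $T=T(z_0,z_1)$ is the generating function of the tower. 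Both vertical partitions here have top box of color $1$, as one reads off the coordinate data $(0,0,1)$ and $(1,\mu,0)$, which is why the vertical factor is $\bar\chi_1^2$ rather than $\bar\chi_0\bar\chi_1$. Thus the whole problem reduces to evaluating $T$.

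First I would record that, by the degree convention, a white (color $0$) tower box contributes $z_0$ and a black (color $1$) box contributes $z_1$, so $T=\sum z_0^{a_0+\dots+a_m}z_1^{b_0+\dots+b_{m-1}}$, the sum ranging over all integer tuples satisfying \eqref{condition}. The key structural observation is that the constraint region is invariant under the simultaneous shift $a_s\mapsto a_s+1$, $b_s\mapsto b_s+1$, which rescales the weight by $z_0^{m+1}z_1^{m}=z_0t^{m}$. Hence $(1-z_0t^m)T=0$ as a formal series, so $T$ must be proportional to $\delta(z_0t^m)$. This already explains the shape of the answer and matches the relaxed case $m=0$, where $T=\sum_{a_0\in\Z}z_0^{a_0}=\delta(z_0)$.

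To pin down the proportionality factor I would carry out the sum. Summing each $b_s$ over $b_s\geq\max(a_s,a_{s+1})$ produces $(1-z_1)^{-1}z_1^{\max(a_s,a_{s+1})}$, and then passing to the overall shift $a_0$ together with the differences $d_s=a_{s+1}-a_s$ separates the translation from the shape. The $a_0$-sum reproduces $\delta(z_0t^m)$, while each $d_s$ contributes a bi-infinite geometric sum $\frac{1}{1-w_sz_1}+\frac{1}{w_s-1}=\frac{w_s(1-z_1)}{(1-w_sz_1)(w_s-1)}$ with $w_s=z_0^{m-s}z_1^{m-1-s}$ (the weight of the variable $d_s$). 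After cancelling the $m$ factors of $(1-z_1)$ this gives
\[
T=(-1)^m\,\delta(z_0t^m)\prod_{s=0}^{m-1}\frac{w_s}{(1-w_sz_1)(1-w_s)}.
\]
Using the delta to substitute $z_1=t^{m+1}$ (equivalently $z_0=t^{-m}$) one computes $w_s=t^{-(s+1)}$, whence $\frac{w_s}{(1-w_sz_1)(1-w_s)}=\frac{-1}{(1-t^{m-s})(1-t^{s+1})}$ and the signs cancel, yielding $T=\delta(z_0t^m)\prod_{j=1}^m(1-t^j)^{-2}$.

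Finally I would combine the two pieces on the support of the delta. On $z_1=t^{m+1}$ one has $(1-z_1t^{j-1})=(1-t^{m+j})$, so $\bar\chi_1^2=\prod_{j\geq1}(1-t^j)^{-2}\prod_{j\geq m+1}(1-t^j)^{-2}$; multiplying by the factor $\prod_{j=1}^m(1-t^j)^{-2}$ coming from $T$ telescopes the last two products into $\prod_{j\geq1}(1-t^j)^{-2}$, giving $\chi_{G_{0,m}^{\mu,\nu}}=\prod_{j\geq1}(1-t^j)^{-4}\,\delta(z_0t^m)$, as claimed. The main obstacle, and the only delicate point, is that the tower sums are bi-infinite and are not honest power series: the series $\frac{1}{1-w_sz_1}$ and $\frac{1}{w_s-1}$ are expanded in opposite regions, so the closed-form manipulations above are meaningful only as formal distributions and become rigorous once paired with $\delta(z_0t^m)$ and restricted to $z_1=t^{m+1}$ via the identity $\delta(x)g(x)=g(1)\delta(x)$ applied in the coordinates $t,z_1$ (where $z_0=t/z_1$ and $\delta(z_0t^m)=\delta(z_1/t^{m+1})$). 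Maintaining consistency of the two expansion regions throughout is the bookkeeping the proof must control.
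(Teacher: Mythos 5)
Your proposal is correct, and it reaches the answer by a genuinely different computation than the paper's. Both proofs start from the same factorization $\chi=\bar\chi_1^2\cdot T$ with $T$ the tower generating function, and both extract the factor $\delta(z_0t^m)$ from the same shift invariance (the paper phrases it via the module isomorphism $G_{0,m}^{\mu,\nu}\simeq G_{0,m}^{\mu,\nu+1}$, you phrase it as invariance of the constraint region under $a_s\mapsto a_s+1$, $b_s\mapsto b_s+1$ — the same observation). The difference is in pinning down the scalar $p(t)$. The paper extracts the coefficient of $z_0^0$: it first argues that tuples with some $a_j<0$ or with $\tilde a_0<0$ cannot contribute to that coefficient, then sets up the bijection with the staircase shapes $R_i,\bar R_i$ to get the closed form $A(z_0,t)=\bar\chi_1^2/\bigl((1-z_0t^m)\prod_{j=1}^m(1-t^j)^2\bigr)$ for the non-negative tuples, and finally evaluates the coefficient by a residue at $z_0=t^{-m}$. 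You instead sum over \emph{all} integer tuples directly, carrying out the $b_s$- and $d_s$-sums as bi-infinite formal series and collapsing everything onto the support of $\delta(z_0t^m)$. Your route avoids both the staircase bijection and the positivity reduction, at the cost of the formal-distribution bookkeeping you flag at the end: the identity $\frac{1}{1-w_sz_1}+\frac{1}{w_s-1}=\frac{w_s(1-z_1)}{(1-w_sz_1)(w_s-1)}$ is not an identity of the two one-sided expansions you are actually summing, and is only usable because, in the coordinates $(t,\,x=z_1/t^{m+1})$, every factor other than $\delta$ is a power series in $t$ whose coefficients are polynomials in $x$, so that $\delta(x)g(t,x)=g(t,1)\delta(x)$ applies and the evaluation at $x=1$ of each one-sided expansion agrees with the value of the corresponding rational function. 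You state this caveat but do not verify it; writing out that one-sidedness check (which does hold — e.g.\ $w_s=x^{-1}t^{-s-1}$, so $\sum_{d<0}w_s^d=\sum_{e>0}x^et^{(s+1)e}$) is the only thing separating your sketch from a complete proof. I verified the arithmetic: $w_s=t^{-(s+1)}$ on the delta support, each factor contributes $\bigl((1-t^{m-s})(1-t^{s+1})\bigr)^{-1}$, the product telescopes to $\prod_{j=1}^m(1-t^j)^{-2}$, and combining with $\bar\chi_1^2\bigl|_{z_1=t^{m+1}}$ gives $\prod_{j\ge1}(1-t^j)^{-4}$, matching the paper.
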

\begin{proof}
    We have 
    \be 
    \chi_{G_{0,m}^{\mu,\nu}}(z_0,z_1)=\bar\chi_1^2(z_0,z_1)\chi_m(z_0,z_1) =
\frac{\chi_m(z_0,z_1)}{\prod_{i=1}^\infty(1-t^i)^2(1-z_1t^{i-1})^2}
= p(t)\sum_{i\in \Z} (z_0t^m)^i,
    \ee 
    where $\chi_r(z_0,z_1)$ is the contribution of the tower 
and $p(t)$ is some series. 
    The last equality
takes place since we have $G_{0,m}^{\mu,\nu}\simeq G_{0,m}^{\mu,\nu+1}$ and 
    \be
    \chi_{G_{0,m}^{\mu,\nu+1}}(z_0,z_1)=\chi_{G_{0,m}^{\mu,\nu}}(z_0,z_1) t^mz_0=\chi_{G_{0,m}^{\mu,\nu}}(z_0,z_1).
    \ee 
    
To compute $p(t)$ we write the character as a series in $t,z_0$ and compute the coefficient of $(z_0)^0$.
    
    The state in the tower is characterized by integer numbers $b_0,\dots,b_{m-1}$, $a_0,\dots,a_m$ satisfying $b_i\geq a_i,a_{i+1}$ which tell the amount of boxes on the tower. Let $c_i=b_i-a_i$ and $d_i=b_i-a_{i+1}$, $i=0,\dots, m-1$. Then $b_i$ and $a_i$ are uniquely determined by $a_0$, $c_i$ and $d_i$. Moreover, $c_i\geq 0$ and $d_i\geq 0$.

    Note that if $a_j<0$ for some $j$, then 
    \begin{align}\label{inequality}
    \sum_{i=0}^{m-1}b_i-\sum_{i=0}^{m}a_i=\sum_{i=0}^{j-1} c_i-a_j+\sum_{i=j}^{m-1} d_i >0.
    \end{align}
    So the contribution of this state is $z_0^iz_1^j=t^jz_0^{i-j}$ with $j>i$. Since all monomials in $\bar\chi_1(z_0,z_1)$ have the form  $z_0^iz_1^j$ with $i\leq j$, such a
state does not contribute to the 
coefficient of $(z_0)^0$. Thus we can assume $a_i\geq 0$ and therefore $b_i\geq 0$ for all $i$.

For the same reason it is sufficient to consider only states with
\be
\tilde a_0=\sum_{i=0}^{m}a_i-\sum_{i=0}^{m-1}b_i=a_0-\sum_{i=0}^{m-1} d_i\geq 0.
\ee 
    
    Let the shape $R_i$ consist
of $i$ horizontal dominoes aligned to form a staircase, see Figure \ref{domino shapes}.   Adding $R_i$ to the tower aligning the white end with the box with coordinates $(0,x,0)$ changes $d_{i-1}$ to 
$d_{i-1}+1$ and $a_0$  to $a_0+1$
keeping $c_j$, and $d_k$, $k\neq i-1$ intact. 

Similarly, let shape $\bar R_i$ consist
of $i$ vertical dominoes aligned to form a staircase, see Figure \ref{domino shapes}.   Adding $\bar R_i$ to the tower aligning the white end with the box with coordinates $(-m,x,m)$ changes $c_{m-i}$ to $c_{m-i}+1$ keeping $a_0$, $d_j$, and $c_k$, $k\neq m-i$ intact.

\begin{figure}[ht]
\centering
\begin{tikzpicture}
\fwbox at (-6, 0);
\fbbox at (-6.6, 0);

\fwbox at (-3.0, 0);
\fbbox at (-3.6, 0);
\fwbox at (-3.6, -0.6);
\fbbox at (-4.2, -0.6);

\fwbox at (-0.0, 0);
\fbbox at (-0.6, 0);
\fwbox at (-0.6, -0.6);
\fbbox at (-1.2, -0.6);
\fwbox at (-1.2, -1.2);
\fbbox at (-1.8, -1.2);

\fwbox at (2, -1.8);
\fbbox at (2, -1.2);

\fwbox at (4, -1.8);
\fbbox at (4, -1.2);
\fwbox at (4.6, -1.2);
\fbbox at (4.6, -0.6);

\fwbox at (6, -1.8);
\fbbox at (6, -1.2);
\fwbox at (6.6, -1.2);
\fbbox at (6.6, -0.6);
\fwbox at (7.2, -0.6);
\fbbox at (7.2, -0.0);

\node at (-6,-2.5) {$R_1$};
\node at (-3.5,-2.5) {$R_2$};
\node at (-1,-2.5 ) {$R_3$};
\node at (2.3,-2.5) {$\bar R_1$};
\node at (4.6,-2.5) {$\bar R_2$};
\node at (7,-2.5 ) {$\bar R_3$};

\end{tikzpicture}
\caption{The shapes $R_i$ and $\bar R_i$.}
\label{domino shapes}
\end{figure}
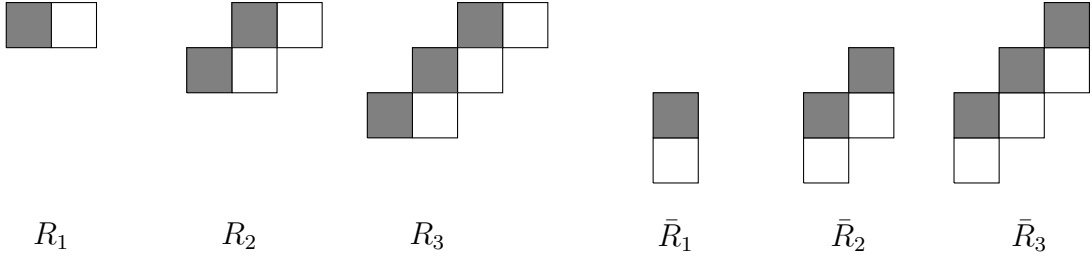
Therefore a state corresponding $a_i$ and $b_i$ can be obtained adding to the reference state $\tilde a_0$ full staircases, $d_i$ shapes $R_{i+1}$ and $c_i$ shapes $\bar R_{i}$. Thus, we have a bijection of sequences of non-negative integers $a_i,b_i$, satisfying \eqref{condition} and such that $\tilde a\geq 0$ with  sequences of non-negative integers $\tilde a_0, c_i, d_i$ without any conditions.

Thus the character of all such states is
\be
A(z_0,t)=\frac{\bar\chi_1^2(z_0,z_1)}{( 1-z_0t^m)\prod_{j=1}^{m}(1-t^{j})^2 } =\frac{1}{( 1-z_0t^m)\prod_{j=1}^{m}(1-t^{j})^2 \prod_{i=1}^\infty(1-t^i)^2(1-z_0^{-1}t^i)^2}.
\ee 
 Let $|t|<1$. Then the coefficient of $z_0$ can be computed as an integral:
 \be
p(t)=\frac{1}{2\pi\hspace{-2pt}\on{i}}\int_{|z_0|=1} A(z_0,t) \,\frac{dz_0}{z_0}=-\Res_{z_0=t^{-m}} \frac{A(z_0,t)}{z_0}=\frac{1}{\prod_{i=1}^\infty (1-t^i)^4}.
 \ee
 \end{proof}
Note that 
the character of $G_{1,m}^{\mu,\nu}$ coincides with the character of 
$\hat L_{-m}(a,b,c;\kappa)$, cf. Figure \ref{Slope 2 picture}.

The module $G_{1,m}^{\mu,\nu}$ restricted to the vertical  $U_q^v\widehat{\gl}_2\subset \E$ subalgebra is the irreducible relaxed Verma module  $\hat L_{-m}(a,b,c;\kappa)$ with
$q^a=q_3^{m+1}q^{-\mu-2\nu}$, $q^b=1$,
$\kappa=q_3$, and suitable $c$.  
Moreover, $G_{1,m}^{\mu,\nu}$ is the evaluation module obtained from $\hat L_{-m}(a,b,c;\kappa)$.

Similarly, the module $G_{0,m}^{\mu,\nu}$ is the evaluation module obtained from slope $m+1$
relaxed module $\hat L_{m+1}(a,b,c;\kappa)$.

\bigskip

{\bf Acknowledgments.\ } MJ is partially supported by 
JSPS KAKENHI Grant Numbers \newline JP19K03509, JP20K03568.
EM is partially supported by Simons Foundation grant number \#709444.

EM thanks Rikkyo University, where a part of this work was done, 
for hospitality.

We thank B. Feigin for interesting discussions.


\begin{thebibliography}{0000000}
\bibitem[B31]{B31}
H. Bethe, {\it Zur Theorie der Metalle}, Z. Physik
{\bf 71} (1931), 205–226 

\bibitem[BM]{BM} L. Bezerra and E. Mukhin, 
{\it Representations of quantum toroidal superalgebras and plane s-partitions}, Communications in Contemporary Mathematics, {\bf 27}, (2025) no. 1, 2450002 

\bibitem[BrM]{BrM} M. Brito and E. Mukhin,  {\it Representations of quantum affine algebras of type $B_N$}, Trans. Amer. Math. Soc., {\bf 369} (2017), no. 4, 2775-2806


\bibitem[FFJMM]{FFJMM} B. Feigin, E. Feigin, M. Jimbo, T. Miwa, and E. Mukhin, {\it Quantum continuous $\gl_\infty$ tensor products of Fock modules and $W_n$-characters}, Kyoto J. Math., {\bf  51} (2011), no. 2, 365–392

\bibitem[FJMM1]{FJMM1} B. Feigin, M. Jimbo, T. Miwa, and E. Mukhin, {\it Representations of quantum toroidal $\gl_n$}, J. Algebra,  {\bf 380} (2013), 78–108

\bibitem[FJMM2]{FJMM2} B. Feigin, M. Jimbo, T. Miwa, and E. Mukhin, {\it 
Branching rules for quantum toroidal $\mathfrak{gl}_n$}, 
Adv. Math., {\bf 300} (2016), 229–274

\bibitem[FJM1]{FJM1}  B. Feigin, M. Jimbo, and E. Mukhin, {\it Evaluation modules for quantum toroidal $\gl_n$ algebras}, Progr. Math., {\bf 337} (2021), 393–425

\bibitem[FJM2]{FJM2}  B. Feigin, M. Jimbo, and E. Mukhin, {\it Combinatorics of vertex operators and deformed W-algebra of type D$(2,1;\alpha)$}, Adv. Math., {\bf 403} (2022), Paper No. 108331, 54 pp


\bibitem[FM1]{FM1} E. Frenkel and E. Mukhin, {Combinatorics of q-characters of finite-dimensional representations of quantum affine algebras}, Comm. Math. Phys., {\bf 216} (2001), no. 1, 23–57

\bibitem[FM2]{FM2} E. Frenkel and E. Mukhin, {The Hopf algebra $\operatorname{Rep}\, U_q\hat\gl_{\infty}$}, Selecta Math., {\bf 8} (2002), 537-635

\bibitem[FST]{FST} B. Feigin, A. Semikhatov, and I. Tipunin, {\it Equivalence between chain categories of representations of affine $\sln(2)$ and $N=2$ superconformal algebras}, J. Math. Phys. {\bf 39} (1998), no. 7, 3865–3905

\bibitem [GT50]{GT50}  I. Gelfand and M. Tsetlin, {\it Finite-dimensional representations of the group of unimodular matrices}, Dokl. Akad. Nauk SSSR {\bf 71} (1950), 825–828 (Russian). English transl. in: I. M.
Gelfand, “Collected papers”. Vol II, Berlin: Springer-Verlag 1988, 653–656

\bibitem[K90]{K90} M. Kashiwara,{ \it Crystalizing the q-analogue of universal enveloping algebras}, Comm. Math. Phys., {\bf 133}, no. 2  (1990), 249–260 

\bibitem[L90]{L90} G. Lusztig,  {\it Canonical bases arising from quantized enveloping algebras}, J. of AMS, {\bf 3}, (1990), no. 2, 447–498

\bibitem[M1]{M1} K. Miki, {\it Toroidal braid group action and an automorphism
of toroidal algebra $U_q\bigl(\mathfrak{sl}_{n+1,tor}\bigr)$ ($n\ge2$)}, Lett. Math. Phys., {\bf 47} (1999), no. 4,  365-378

\bibitem[M2]{M2} K. Miki,  {\it Toroidal and level $0$ $U_q'\widehat{sl_{n+1}}$ actions on $U_q\widehat{gl_{n+1}}$
modules}, J. Math. Phys., {\bf 40} (1999), no. 6, 3191-3210

\bibitem[MY]{MY} E. Mukhin and Ch. Young, {\it Path description of type B q-characters}, Adv. Math. {\bf 231} (2012), no. 2, 1119–1150 


\bibitem[NT]{NT} M. Nazarov and V. Tarasov, 
{\it Representations of Yangians with Gelfand-Zetlin bases}, J. Reine Angew. Math. {\bf 496} (1998), 181–212



\bibitem[OV05]{OV05} A. Vershik and A. Okounkov,  {\it A New Approach to the Representation Theory of the Symmetric Groups. II}, J. Math. Sciences., {\bf 131} (2005), 5471–5494 

\end{thebibliography}
\end{document}